\documentclass{amsart}
\usepackage{amsfonts,amssymb,amscd,amsmath,enumerate,verbatim,calc}
\usepackage{amscd,amssymb,amsopn,amsmath,amsthm,graphics,amsfonts,enumerate,verbatim,calc}
\usepackage[dvips]{graphicx}
\usepackage[colorlinks=true,linkcolor=red,citecolor=blue]{hyperref}
\input xy
\xyoption{all}
\addtolength{\textwidth}{1.5cm}
\calclayout

\newcommand{\rt}{\rightarrow}
\newcommand{\lrt}{\longrightarrow}

\newcommand{\C}{\mathbb{C} }

\newcommand{\K}{\mathbb{K} }

\newcommand{\Z}{\mathbb{Z} }

\newcommand{\CA}{\mathcal{A} }
\newcommand{\CC}{\mathcal{C} }

\newcommand{\CH}{\mathcal{H} }

\newcommand{\CF}{\mathcal{F} }
\newcommand{\CG}{\mathcal{G} }
\newcommand{\CI}{\mathcal{I} }

\newcommand{\CM}{\mathcal{M} }

\newcommand{\CP}{\mathcal{P} }
\newcommand{\CQ}{\mathcal{Q} }

\newcommand{\CS}{\mathcal{S} }

\newcommand{\TT}{\mathbb{T} }
\newcommand{\CX}{\mathcal{X} }
\newcommand{\CY}{\mathcal{Y} }

\newcommand{\CV}{\mathcal{V} }
\newcommand{\CW}{\mathcal{W} }

\newcommand{\X}{\mathbf{X}}
\newcommand{\A}{\mathbf{A}}

\newcommand{\F}{\mathbf{F}}

\newcommand{\Y}{\mathbf{Y}}
\newcommand{\G}{\mathbf{G}}
\newcommand{\W}{\mathbf{W}}

\newcommand{\E}{\mathbf{E}}

\newcommand{\Mod}{{\rm{Mod\mbox{-}}}}

\newcommand{\Prj}{{\rm{Prj}\mbox{-}}}

\newcommand{\Flat}{{\rm{Flat}\mbox{-}}}

\newcommand{\im}{{\rm{Im}}}

\newcommand{\dg}{{\rm{dg}}}

\newcommand{\HE}{{\rm{H}}}

\newcommand{\ZE}{{\rm{Z}}}
\newcommand{\BE}{{\rm{B}}}
\newcommand{\CK}{{\rm{C}}}
\newcommand{\Coker}{{\rm{Coker}}}
\newcommand{\Ker}{{\rm{Ker}}}

\newcommand{\Hom}{{\rm{Hom}}}
\newcommand{\Ext}{{\rm{Ext}}}

\newtheorem{theorem}{Theorem}[section]
\newtheorem{corollary}[theorem]{Corollary}
\newtheorem{lemma}[theorem]{Lemma}

\newtheorem{proposition}[theorem]{Proposition}

\theoremstyle{definition}
\newtheorem{definition}[theorem]{Definition}
\newtheorem{example}[theorem]{Example}

\newtheorem{remark}[theorem]{Remark}

\theoremstyle{plain}

\theoremstyle{definition}

\numberwithin{equation}{section}

\begin{document}

\title[Cotorsion pairs and adjoint functors in the homotopy category of $N$-complexes]{Cotorsion pairs and adjoint functors in the homotopy category of $N$-complexes}

\author[Bahiraei]{Payam Bahiraei}

\address{Department of Pure Mathematics, Faculty of Mathematical Sciences,
University of Guilan, P.O. Box 41335-19141, Rasht, Iran and
School of Mathematics, Institute for Research in Fundamental Sciences (IPM),    
 P.O.Box: 19395-5746, Tehran, Iran }
\email{bahiraei@guilan.ac.ir}

\subjclass[2010]{18E30, 18E15, 55U35, 18G55}

\keywords{$N$-complexes; complete cotorsion pairs; homotopy category.}

\maketitle

\begin{abstract}
In this paper, we first construct some complete cotorson pairs on the category $\C_N(\CG)$ of unbounded $N$-complexes of Grothendieck category $\CG$, from two given cotorsion pairs in $\CG$. Next as an application, we focus on particular homotopy categories and the existence of adjoint functors between them. These are an $N$-complex version of the results that were shown by Neeman in the category of ordinary complexes. 
\end{abstract}

\section{introduction}
cotorsion pairs (or cotorsion theory) were invented by \cite{Sal79} in the category of abelian groups and was rediscovered by Enochs and coauthors in the 1990's. In short, a cotorsion pair in an abelian category $\CA$ is a pair $(\CF,\CC)$ of classes of objects of $\CA$ each of which is the orthogonal complement of the other with respect to the $\Ext$ functor. In recent years we have seen that the study of cotorsion pairs is especially relevant to study of covers and envelopes, particularly in the proof of the flat cover conjecture \cite{BBE}. In 2002, Hovey established a correspondence between the theories of cotorsion pairs and model structures (Hovey's theorem \cite{Hov02}). So the study of cotorsion pairs on the category of complexes is important, see \cite{Gil04}, \cite{Gil06}, \cite{Gil08}, \cite{EER08}, \cite{EEI}, \cite{EAPT}, \cite{St}, \cite{YD15}. Since the concept of $N$-complexes is a generalization of the ordinary complexes, it is natural to study cotorsion pairs on the category of $N$-complexes. The notion of $N$-complexes was introduced by Mayer \cite{May42} in the his study
of simplicial complexes and its homological theory was studied by Kapranov and
Dubois-Violette in \cite{Kap96}, \cite{DV98}. Besides their applications in theoretical physics \cite{CSW07}, \cite{Hen08}, the homological properties of $N$-complexes have become a subject of study for many authors as in, \cite{Est07}, \cite{Gil12}, \cite{GH10}, \cite{Tik02}. By an $N$-complex $\X$, we mean a sequence 
$\cdots \rightarrow X^{n-1} \rightarrow X^n \rightarrow X^{n+1} \rightarrow \cdots $
such that composition of any $N$ consecutive maps gives the zero map in $\CA$. 
We can view the category of $N$-complexes as the category of representation of the quiver $A_{\infty}^{\infty}= \cdots \rt v_{-1}\rt v_0\rt v_1 \rt v_2 \rt \cdots$ with the relations that $N$ consecutive arrows compose to 0. Recently, Holm and Jorgensen in \cite{HJ} construct model structures on the category of representations of quiver with relations, in particular for the category of $N$-complexes.

In this work we show some typical ways of getting complete cotorson pairs in the category of $N$-complexes.
One method for creating such pairs is by starting with two cotorsion pairs in $\CA$ and then
using these pairs to find related pairs in $\C_N(\CG)$,  the category of $N$-complexes over a Grothendieck category $\CG$.  More precisely:
\begin{theorem}
Suppose that $(\CF,\CC)$ and $(\CX,\CY)$ are two cotorsion pairs in $\CG$ with $\CF\subseteq\CX$ and the generator of $\CG$ is in $\CF$. If both $(\CF,\CC)$ and $(\CX,\CY)$ are cogenerated by sets, then the induced pairs $( \widetilde{\CF}_{\CX_N}, (\widetilde{\CF}_{\CX_N})^\perp)$ and $( {}^\perp(\widetilde{\CY}_{\CC_N}), \widetilde{\CY}_{\CC_N})$ are complete cotorsion pairs.
\end{theorem}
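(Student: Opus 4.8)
The plan is to adapt the classical ``induced cotorsion pair'' machinery for ordinary chain complexes (as developed by Gillespie, and by Yang--Ding for $N$-complexes) to the present setting, and to verify the two hypotheses of Eklof--Trlifaj's theorem: that the classes in question are generated by sets, and that they are resolving/coresolving enough to yield \emph{complete} pairs. First I would recall the standard notation: given a class $\CD$ in $\CG$, $\widetilde{\CD}$ denotes the class of $N$-complexes that are exact (in the $N$-complex sense, i.e. all the $Z$-cycles agree appropriately) with all cycle objects in $\CD$, while a class like $\CX_N$ (or its ``dw'' analogue) denotes the $N$-complexes with all terms in $\CX$; the mixed notation $\widetilde{\CF}_{\CX_N}$ should be the class of $N$-complexes that are $N$-exact, have all terms in $\CX_N$, and have all cycles in $\CF$. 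The key point making the pair well behaved is the inclusion $\CF\subseteq\CX$ together with $(\CF,\CC)$, $(\CX,\CY)$ being cotorsion pairs, which forces the orthogonality $\Ext^1_{\C_N(\CG)}(\widetilde{\CF}_{\CX_N}, \widetilde{\CY}_{\CC_N})=0$ via the usual term-by-term and cycle-by-cycle $\Ext$ computation for $N$-complexes (the $N$-complex analogue of the ``$\Ext^1(\widetilde{\CF}, \CC\text{-complexes})$'' lemma).

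The second step is to identify the right-hand (resp. left-hand) orthogonal precisely, i.e. to show $(\widetilde{\CF}_{\CX_N})^\perp$ consists of the $N$-complexes $Y$ that are ``$\dg$-$\widetilde{\CY}_{\CC_N}$'', and dually ${}^\perp(\widetilde{\CY}_{\CC_N})$ consists of the ``$\dg$-$\widetilde{\CF}_{\CX_N}$'' $N$-complexes. This is where the condition that the generator of $\CG$ lies in $\CF$ is used: it guarantees that $\CF$ (hence $\CX$) contains a generating set, so that one can build, for any $N$-complex, a surjection from an object of the left class, which is what makes the ``$\dg$'' descriptions match up and, ultimately, what gives enough projectives/injectives for completeness. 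I would prove the orthogonality equalities by the standard two inclusions: one direction is the $\Ext$-vanishing just discussed, and the reverse direction uses a ``special precover'' constructed by transfinite induction against the generating set — so this step really depends on the set-cogeneration hypothesis.

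The third and decisive step is completeness, i.e. the existence of special precovers and special preenvelopes. For this I would invoke Eklof--Trlifaj: it suffices to exhibit a \emph{set} $\CS$ of $N$-complexes with $\widetilde{\CF}_{\CX_N} = {}^\perp(\CS^\perp)$ (a set cogenerating the pair), and then cotorsion pairs cogenerated by a set are automatically complete provided the left class contains a generator of the ambient (Grothendieck) category $\C_N(\CG)$. The generating set is assembled from the given generating sets of $(\CF,\CC)$ and $(\CX,\CY)$ in $\CG$ by applying the standard ``disk'' and ``sphere'' $N$-complex functors $D^n_i(-)$ and $S^n(-)$ (the $N$-complex versions, which shift by varying amounts up to $N$); one checks that bounded-size generators in $\CG$ produce bounded-size $N$-complexes, so the resulting collection is a set, and that an $N$-complex right-orthogonal to all of them lies in $\widetilde{\CY}_{\CC_N}$ (this is again a term/cycle $\Ext$-computation, now run against the disks and spheres). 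That $\C_N(\CG)$ is itself Grothendieck, with a generator built from the generator of $\CG$ via these disk functors, is routine and places us squarely in the setting of Eklof--Trlifaj / Hovey.

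I expect the main obstacle to be the second step — pinning down the orthogonal classes exactly, i.e. proving that $\Ext^1$-orthogonality against \emph{all} of $\widetilde{\CF}_{\CX_N}$ (not merely against the generating set) forces membership in $\widetilde{\CY}_{\CC_N}$ rather than merely in the larger ``$\dg$'' class. The subtlety is genuinely $N$-complex-specific: unlike the $2$-complex case, ``$N$-exact'' involves the $N-1$ distinct cycle/boundary subobjects $Z_i, B_i$ at each spot, and the short exact sequences relating them are longer, so the inductive dévissage that reduces statements about $N$-complexes to statements about their cycle objects in $\CG$ has more cases and one must be careful that $\CF\subseteq\CX$ is exactly the hypothesis that keeps all the intermediate extension groups controlled. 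Once that bookkeeping is done correctly, completeness follows formally from Eklof--Trlifaj as above, and the dual pair $({}^\perp(\widetilde{\CY}_{\CC_N}),\widetilde{\CY}_{\CC_N})$ is handled symmetrically (or deduced by a dimension-shifting/duality argument within $\C_N(\CG)$).
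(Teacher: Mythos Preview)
Your disk-and-sphere approach works for the second pair $({}^\perp(\widetilde{\CY}_{\CC_N}), \widetilde{\CY}_{\CC_N})$, and indeed matches what the paper does in Proposition~\ref{Prop 0002}: the cogenerating set is assembled from the disks $D^i_r$ applied to the generator of $\CG$ and to the cogenerating sets of $(\CF,\CC)$ and $(\CX,\CY)$. But there is a genuine gap for the first pair $(\widetilde{\CF}_{\CX_N}, (\widetilde{\CF}_{\CX_N})^\perp)$. First, a definitional slip: $\widetilde{\CF}_{\CX_N}$ is the class of $N$-exact complexes with \emph{terms} in $\CF$ and \emph{cycles} $\ZE^i_r$ in $\CX$, not the other way around. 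More importantly, the right orthogonal $(\widetilde{\CF}_{\CX_N})^\perp$ is \emph{not} $\widetilde{\CY}_{\CC_N}$; by Proposition~\ref{prop 3.3} it is the class of complexes with terms in $\CC$ such that every map from an object of $\widetilde{\CF}_{\CX_N}$ is null-homotopic. That condition cannot be detected by your proposed test set: the only disks lying in $\widetilde{\CF}_{\CX_N}$ are the full ones $D^i_N(F)$ with $F\in\CF$ (for $r<N$ and $M\neq 0$ the complex $D^i_r(M)$ is not $N$-exact), and orthogonality against those merely forces the terms into $\CC$. Spheres are not $N$-exact either. So any set $\CS$ built from disks and spheres meets $\widetilde{\CF}_{\CX_N}$ only in contractible complexes, and $\CS^\perp$ will be strictly larger than $(\widetilde{\CF}_{\CX_N})^\perp$; Eklof--Trlifaj then yields the wrong pair.

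The paper's route for the first pair is different and substantially harder: one shows directly that $\widetilde{\CF}_{\CX_N}$ is \emph{deconstructible}, i.e.\ every object admits an $(\widetilde{\CF}_{\CX_N})^\kappa$-filtration for a suitable regular $\kappa$, and then invokes Theorem~\ref{theorem 2.12}. The filtration is produced by a transfinite zig-zag (Proposition~\ref{Prop 0001}): at each successor step one works simultaneously with the Hill families $\CH^{i,r}$ attached to the $\CX^\kappa$-filtrations of \emph{all} the cycle objects $\ZE^i_r(\F)$ ($1\leq r\leq N$), alternating enlargements inside those families with applications of Lemma~\ref{lemma2.14}, which manufactures $N$-exact subcomplexes of controlled size containing a prescribed $\kappa$-presentable piece. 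This Hill-lemma construction is the missing idea in your plan; the two pairs are genuinely asymmetric rather than dual, and only the second one succumbs to the elementary disk argument.
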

For the definition of $\widetilde{\CF}_{\CX_N}$ and $\widetilde{\CY}_{\CC_N}$ see section \ref{section 3}. This theorem recovers some results of recent work of Yang and Cao (see \cite{YC}) and also includes the case in which the class is not closed under direct limits. As an application, we focus on particular homotopy categories and the existence of adjoint functors between them. The homotopy category $\K_N(\mathcal{A})$ of $N$-complexes of an additive category $\mathcal{A}$ was studied by Iyama and et al. in \cite{IKM}. In case $\CA=\Mod R$ (the category of all left $R$-modules) they proved that $\K_N^\natural(\Prj R) \cong \K^\natural(\Prj \TT_{N-1}(R))$
where $\natural = -,b,(-,b)$ and $\TT_{N-1}(R)$ is the ring of triangular matrices of order $N-1$ with entries in $R$. In \cite{BHN} the authors proved that $\K_N(\Prj R)$ is equivalent to $\K(\Prj \TT_{N-1}(R))$ whenever $R$ is a left coherent ring. This equivalence allows us to study the properties of $\K_N(\Prj R)$ from $\K(\Prj \TT_{N-1}(R))$. For instance  $\K_N(\Prj R)$ is compactly generated whenever $R$ is a left coherent ring. There is a natural question and this is whether it is possible to introduce an $N$-complex version of \cite[Theorem 0.1]{Nee10}, \cite[Proposition 8.1]{Nee08}. The answer is not trivial, since we do not have such an equivalence for $\K_N(\Flat R)$ and $\K(\Flat \TT_{N-1}(R))$. Here we will show that if we consider the complete cotorsion pairs $(\Prj R, \Mod R)$ and $(\Flat R, (\Flat R)^\perp)$, then we have a right adjoint functor $j^\ast:\K(\Flat R)\rt \K_N(\Prj R)$ of the natural inclusion $j_{!}:\K_N(\Prj R)\rt K_N(\Flat R)$, and a right adjoint functor of $j^\ast$.

The paper is organized as follows. In section \ref{section 2} we recall some generality on $N$-complexes and provide any background information needed through this paper such as Hill lemma. Our main result appears in section \ref{section 3} as Theorem \ref{theorem 3.6}. This result is generalized of \cite[Theorem 3.13]{YC} and   \cite[Propositions 4.8 and 4.9]{YC}. The proof of this theorem is completely different from the proof of \cite{YC}. Finally, in section \ref{section 4}, we will provide an $N$-complex version of the results that were shown by Neeman in the category of ordinary complexes.

\section{preliminaries}
\label{section 2}
\subsection{The category of $N$-complexes}
Let $\mathcal{C}$ be an additive category. We fix a positive integer $N\geq 2$. An $N$-complex is a diagram
$$\xymatrix{\cdots \ar[r]^{{d}^{i-1}_{\X}} & X^{i} \ar[r]^{{d}^{i}_{\X}} &X^{i+1} \ar[r]^{{d}^{i+1}_{\X}}  & \cdots }$$ 
with $X^i \in \mathcal{C}$ and morphisms $d^{i}_{\X} \in \Hom_{\mathcal{C}}(X^i,X^{i+1})$ satisfying $d^N=0$. That is, composing any $N$-consecutive maps gives 0. A morphism between $N$-complexes is a commutative diagram
$$\xymatrix{\cdots \ar[r]^{{d}^{i-1}_{\X}} & X^{i} \ar[r]^{{d}^{i}_{\X}} \ar[d]^{f^i} & X^{i+1} \ar[r]^{{d}^{i+1}_{\X}} \ar[d]^{f^{i+1}}  & \cdots \\ \cdots \ar[r]^{{d}^{i-1}_{\Y}} & Y^{i} \ar[r]^{{d}^{i}_{\Y}}& Y^{i+1} \ar[r]^{{d}^{i+1}_{\Y}} & \cdots  }$$
We denote by $\C_N(\mathcal{C})$ the category of unbounded $N$-complexes. For any object $M$ of $\mathcal{C}$ and any $j$ and $1\leq i \leq N$, let
$$\xymatrix{  D^{j}_{i}(M): \cdots \ar[r] & 0 \ar[r] & X^{j-i+1} \ar[r]^{{d}^{j-i+1}_{\X}} & \cdots \ar[r]^{{d}^{j-2}_{\X}} & X^{j-1} \ar[r]^{{d}^{j-1}_{\X}}  & X^j \ar[r] &0 \ar[r] & \cdots }$$
be an $N$-complex satisfying $X^n=M$ and $d^{n}_{\X}=1_M$ for all $(j-i+1\leq n \leq j)$.
\\
For $0\leq r<N$ and $i\in \mathbb{Z}$, we define
$$ d_{\X,\lbrace r\rbrace}^{i}:=d_{\X}^{i+r-1} \cdots d_{\X}^{i} $$
In this notation $d_{\lbrace 1\rbrace}^i=d^i$ and $d_{\lbrace 0\rbrace}^i=1_{X_i}$.
\begin{definition}
Let $f:\X\longrightarrow \Y$ be a morphism in $\C_N(\mathcal{C})$. Then the mapping cone $C(f)$ of $f$ define as bellow:
\begin{small}
$$C(f)^m=Y^m \oplus {\coprod}_{i=m+1}^{m+N-1}X^i, \qquad d^{m}_{C(f)}=
\left[ \begin{array}{cccccc}

d_{\Y}^{m} & f^{m+1} & 0 & 0  &\cdots & 0 \\
0 & 0 & 1 & \ddots & \ddots & \vdots \\
  \vdots & \vdots & \ddots & \ddots & \ddots & 0 \\
   & 0 & \cdots &  & 1 & 0 \\
    0 & 0 & \cdots & \cdots & 0 & 1 \\
     0 & -d_{\X ,\lbrace N-1\rbrace}^{m+1} & -d_{\X ,\lbrace N-2\rbrace}^{m+2} & \cdots & \cdots & -d_{\X}^{m+N-1} \\
\end{array} \right]  $$
\end{small}
\end{definition}
\begin{definition}
\label{def 11}
A morphism $f:\X \longrightarrow \Y$ of $N$-complexes is called null-homotopic if there exists $s^i \in \Hom_{\mathcal{C}}(X^i,Y^{i-N+1})$ such that
\[ f^i=\sum_{j=0}^{N-1} d_{\Y, \lbrace N-1-j\rbrace}^{i-(N-1-j)}s^{i+j}d^{i}_{\X, \lbrace j\rbrace}\]
We denote by $\K_N(\mathcal{C})$ the homotopy category of unbounded $N$-complexes.
\end{definition}
\begin{definition}
For $\X=(X^i,d^i)\in \C_N(\mathcal{C})$, define suspension functor $\Sigma:\K_N(\CC)\longrightarrow \K_N(\CC)$ as follows:
$$(\Sigma \X)^m={\coprod}_{i=m+1}^{m+N-1}X^i, \qquad d^{m}_{\Sigma \X}=
\left[ \begin{array}{cccccc}

0 & 1 & 0 & 0  &\cdots & 0 \\
 & 0 & \ddots & \ddots & \ddots & \vdots \\
  \vdots & \vdots & \ddots & \ddots & \ddots & 0 \\
   &  &  & \ddots & \ddots & 0 \\
    0 & 0 & \cdots & \cdots & 0 & 1 \\
     -d_{\lbrace N-1\rbrace}^{m+1} & -d_{\lbrace N-2\rbrace}^{m+2} & \cdots & \cdots & \cdot & -d^{m+N-1} \\
\end{array} \right]     
$$
$$(\Sigma^{-1} \X)^m={\coprod}_{i=m-1}^{m-N+1}X^i, \qquad d^{m}_{\Sigma^{-1} \X}=
\left[ \begin{array}{cccccc}

-d^{m-1} & 1 & 0 & \cdots  &\cdots & 0 \\
-d_{\lbrace 2\rbrace}^{m-1} & 0 & 1 & \ddots & \ddots & \vdots \\
  \vdots & \vdots & \ddots & \ddots & \ddots & 0 \\
   &  &  & \ddots & \ddots & 0 \\
    -d_{\lbrace N-2\rbrace}^{m-1} & 0 & \cdots & \cdots & 0 & 1 \\
     -d_{\lbrace N-1\rbrace}^{m-1} & 0 & \cdots & \cdots & \cdot & 0 \\
\end{array} \right]     
$$
\end{definition}
Let $\CS_N(\CC)$ be the collection of short exact sequence in $\C_N(\CC)$ of which each term is split
exact then it is shown in \cite{IKM} that $(\C_N(\CC), \CS_N(\CC))$ is a Frobenius category and its stable
category is the homotopy category $\K_N(\CC)$ of $\CC$. So $\K_N(\CC)$ together with this suspension
functor is a triangulated category, see \cite[Theorem 2.6]{IKM}.

Recall that $\Ext^1_{\C_N(\CC)}(X,Y)$ is the group of (equivalence classes) of short exact sequences
$0 \rightarrow \Y \rightarrow \mathbf{Z} \rightarrow \X \rightarrow 0$. We let $\Ext^1_{dw}(X,Y)$ be the subgroup of $\Ext^1_{\C_N(\CC)}(X,Y)$ consisting of those short exact sequences which are split in each degree.
\begin{lemma}
\label{lemma02}
For N-complex $\X$ and $\Y$, we have
$$\Ext_{dw}^{1}(\Y,\X)\cong \Hom_{\K_{N}(\CC)}(\Y,\Sigma \X)  $$
\end{lemma}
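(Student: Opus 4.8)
The plan is to imitate the classical proof for ordinary complexes, replacing the two-term cone/path complexes by their $N$-complex analogues $D^j_i(-)$ and the suspension $\Sigma$ defined above. First I would fix an element of $\Ext^1_{dw}(\Y,\X)$, that is a short exact sequence $0\rt\X\rt\mathbf Z\rt\Y\rt 0$ that is split in each degree. Choosing a degreewise splitting $\sigma\colon\Y\rt\mathbf Z$ (just a graded morphism, not a chain map) lets me write $Z^m=Y^m\oplus X^m$ and read off the differential of $\mathbf Z$ in block form $\begin{bmatrix} d_\Y & h\\ 0 & d_\X\end{bmatrix}$ where $h=(h^i)$ is a graded map $Y^i\rt X^{i+1}$. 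The condition $d^N_{\mathbf Z}=0$, expanded with the block formula, is exactly the condition that the graded map $h$ (suitably reindexed) defines a chain map $\Y\rt\Sigma\X$: indeed the off-diagonal entry of $d^N_{\mathbf Z}$ is $\sum_{j=0}^{N-1} d_{\X,\{j\}}h\, d_{\Y,\{N-1-j\}}$ composed appropriately, and setting this to zero matches the definition of the differential of $\Sigma\X$ precisely so that $h$ commutes with differentials into $\Sigma\X$. This produces a well-defined map $\Ext^1_{dw}(\Y,\X)\rt\Hom_{\C_N(\CC)}(\Y,\Sigma\X)$ at the level of cocycles.

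Next I would check that this descends to homotopy classes and is independent of the chosen splitting. Changing the splitting $\sigma$ by a graded map $Y^i\rt X^i$ changes $h$ by precisely a null-homotopy in the sense of Definition \ref{def 11} for maps $\Y\rt\Sigma\X$ — this is again a bookkeeping expansion using $d^{m}_{\Sigma\X}$ and the $\{r\}$-notation. Equivalent extensions (i.e.\ related by an isomorphism of short exact sequences fixing $\X$ and $\Y$) give splittings differing in this way, so the assignment is well-defined on $\Ext^1_{dw}(\Y,\X)$ and lands in $\Hom_{\K_N(\CC)}(\Y,\Sigma\X)$. Conversely, given a chain map $g\colon\Y\rt\Sigma\X$, I would build the extension by putting $Z^m=Y^m\oplus X^m$ with differential $\begin{bmatrix} d_\Y & \tilde g\\ 0 & d_\X\end{bmatrix}$, where $\tilde g$ is the degree-shifted component of $g$; the chain-map condition on $g$ is what makes $d^N_{\mathbf Z}=0$, and this visibly gives a degreewise-split short exact sequence. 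These two constructions are mutually inverse, and the identification $\Ext^1_{dw}\cong\Ho\!\mathrm{m}_{\K_N}$ respects the abelian group structures (Baer sum corresponds to addition of chain maps), giving the claimed isomorphism.

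The main obstacle I anticipate is purely computational rather than conceptual: correctly matching the combinatorial index shifts in $d^m_{\Sigma\X}$ (with its entries $-d_{\{N-1\}}^{m+1},\dots,-d^{m+N-1}$) against the expansion of $(d^m_{\mathbf Z})^N=0$ in the $N\times N$ block decomposition coming from $Z=Y\oplus\coprod X$ as in the mapping cone formula. One has to be careful that the graded splitting $\sigma$ is taken homogeneous of degree $0$, that the resulting $h$ has the right target, and that the homotopy formula of Definition \ref{def 11} (which involves the terms $d_{\Y,\{N-1-j\}}s\, d_{\X,\{j\}}$) is applied with $\Sigma\X$ in place of the target $\Y$. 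Once the dictionary between ``off-diagonal block of $d^N_{\mathbf Z}$'' and ``chain map into $\Sigma\X$'' is set up cleanly, everything else follows the ordinary-complex argument verbatim. I would therefore organize the write-up around that single dictionary lemma and then deduce well-definedness, injectivity, and surjectivity as short consequences.
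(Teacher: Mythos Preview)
Your approach is correct but takes a genuinely different route from the paper's. The paper argues via the Frobenius exact structure on $(\C_N(\CC),\CS_N(\CC))$: it sends a morphism $f\colon\Sigma^{-1}\Y\to\X$ to the cone sequence $0\to\X\to C(f)\to\Y\to 0$, and then uses the projective--injective object $I(\Sigma^{-1}\Y)$ both to see that every degreewise-split extension arises this way (surjectivity) and to identify the kernel with the maps factoring through $I(\Sigma^{-1}\Y)$, i.e.\ the null-homotopic ones. Your coordinate approach bypasses the Frobenius machinery entirely and makes the bijection explicit, at the price of the index bookkeeping you anticipate. Two points deserve tightening. First, with $\X$ the subobject and the ordering $Z^m=Y^m\oplus X^m$, the off-diagonal block sits in position $(2,1)$, not $(1,2)$, so your $h$ goes $Y^m\to X^{m+1}$; as written your matrix makes $\Y$ the subcomplex. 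Second, and this is the step that is genuinely different from the $N=2$ case, a chain map $f\colon\Y\to\Sigma\X$ has $N-1$ components $f_k^m\colon Y^m\to X^{m+k}$, not one. You should say explicitly that the first $N-2$ rows of the chain-map condition force $f_k^m=h^{m+k-1}d_{\Y,\{k-1\}}^m$ in terms of $h:=f_1$, after which the last row collapses to the single relation $\sum_{j=0}^{N-1} d_{\X,\{N-1-j\}}\,h\,d_{\Y,\{j\}}=0$ coming from $d_{\mathbf Z}^N=0$. With that dictionary stated, your well-definedness and inverse-construction arguments go through; the paper's cone proof simply sidesteps this unpacking.
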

\begin{proof}
Define a  surjective map $$\psi:\Hom_{\C_N(\CC)}({\Sigma}^{-1}\Y,\X)\longrightarrow \Ext_{dw}^{1}(\Y,\X)$$ by sending a morphism $f:{\Sigma}^{-1}\Y\rightarrow \X$ to the short exact sequence $0\rightarrow \X \rightarrow C(f)\rightarrow \Y\rightarrow 0$. Then we have the following push out diagram 
$$\xymatrix{0 \ar[r] & {\Sigma}^{-1}\Y \ar[r] \ar[d]^{f} & I({\Sigma}^{-1}\Y) \ar[r] \ar[d] & \Y \ar[r] \ar[d]^{1} & 0 \\ 0 \ar[r] & \X \ar[r] & C(f) \ar[r] & \Y \ar[r]  & 0 }$$
where 
$$I({\Sigma}^{-1}\Y)^m= {\coprod}_{i=m}^{m+N-1}({{\Sigma}^{-1}\Y})^i, \qquad d^{m}_{I({\Sigma}^{-1}\Y)}=
\left[ \begin{array}{cccccc}

0 & 1 & 0 & 0  &\cdots & 0 \\
0 & 0 & 1 & \ddots & \ddots & \vdots \\
  \vdots & \vdots & \ddots & \ddots & \ddots & 0 \\
   & 0 & \cdots &  & 1 & 0 \\
    0 & 0 & \cdots & \cdots & 0 & 1 \\
     0 & 0 & 0 & \cdots & \cdots & 0 \\
\end{array} \right]  $$
it is easy to see that $I({\Sigma}^{-1}\Y)$ is a projective-injective object in $(\C_N(\CC), \CS_N(\CC))$ and all arrows are componentwise split exact sequence. But the kernel of map $\psi$ is formed precisely by the null-homotopic morphisms $f:{\Sigma}^{-1}\Y\rightarrow \X$, since $f$ factors through $I({\Sigma}^{-1}\Y)$.
\end{proof}

Let $\X$ be an $N$-complex of objects of $\mathcal{C}$ 
$\xymatrix{\cdots \ar[r]^{{d}^{i-1}_{\X}} & X^{i} \ar[r]^{{d}^{i}_{\X}} &X^{i+1} \ar[r]^{{d}^{i+1}_{\X}}  & \cdots }$, 
we define
$$\ZE^{i}_{r}(\X):= \Ker (d^{i+r-1}_{\X} \cdots d^{i}_{\X}), \quad \BE^{i}_{r}(\X):= \im ((d^{i-1}_{\X} \cdots d^{i-r}_{\X})
$$
$$
\CK^{i}_{r}(\X):= \Coker((d^{i-1}_{\X} \cdots d^{i-r}_{\X}), \quad \HE^{i}_{r}(\X):= \ZE^{i}_{r}(\X)/ \BE^{i}_{N-r}(\X) 
$$

\vspace{0.1cm}
therefore in each degree we have $N-1$ cycle and clearly $\ZE^{n}_{N}(\X)=X^n$. 
We also have a commutative diagram
$$\xymatrix{0 \ar[r] & \ZE^{n}_{1}(\X) \ar[r]^{{d}^{i}_{\X}} \ar[d]^{id} & \ZE^{n}_{r}(\X) \ar[r]^{d} \ar@{^{(}->}[d]^{\qquad PB}  & \ZE^{n+1}_{r-1}(\X)\ar@{^{(}->}[d]  \\ 0 \ar[r] & \ZE^{n}_{1}(\X) \ar[r] & \ZE^{n}_{r}(\X) \ar[r]^{d} & \ZE^{n+1}_{r}(\X)  }$$ 
where the right square is pull-back $(2 \leq r \leq N-1) $.

\begin{definition}
Let $\X \in \K_N(\mathcal{C})$. We say $\X$ is $N$-exact if $\HE^{i}_{r}(\X)=0$
for each $i \in \mathbb{Z}$ and all $r=1,2,...,N-1$. We denote the full subcategory of $\K_N(\mathcal{C})$ consisting of $N$-exact complexes by $\mathcal{E}_N(\mathcal{C})$ .
\end{definition}

The following result are useful. See \cite[Lemma 3.9]{IKM}
\begin{lemma}
\label{lemma 001}
Let $\X$ be an N-complex of objects of an abelian category $\mathcal{C}$. For a commutative diagram 
$$\xymatrix{ \ZE^{n}_{r}(\X) \ar[r]^{d^{n}_{r}} \ar@{^{(}->}[d]^{\iota_{r}^{n}\qquad PB}  & \ZE^{n+1}_{r-1}(\X)\ar@{^{(}->}[d]^{\iota_{r-1}^{n+1}}  \\  \ZE^{n}_{r+1}(\X) \ar[r]^{d^{n}_{r+1}} & \ZE^{n+1}_{r}(\X)  }$$
the following hold.
\begin{itemize}
\item[(1)] $\X \in \mathcal{E}_N(\mathcal{C})$ if and only if $d^{n}_{r}$ is an epimorphism for any n and r.
\item[(2)] $\X$ is homotopic to 0 if and only if $d^{n}_{r}$ is a split epimorphisms and $\iota_{r}^{n}$ is a split monomorphisms for any n and r. 
\end{itemize}
\end{lemma}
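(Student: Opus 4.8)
The plan is to work throughout with the concrete descriptions of the maps in the square: $d^{n}_{r}\colon\ZE^{n}_{r}(\X)\to\ZE^{n+1}_{r-1}(\X)$ is the corestriction of the single differential $d^{n}_{\X}$, and $\iota^{n}_{r}\colon\ZE^{n}_{r}(\X)\hookrightarrow\ZE^{n}_{r+1}(\X)$ the inclusion, both well defined because $d^{N}_{\X}=0$. First I would record three elementary facts used repeatedly. (i) $d^{N}_{\X}=0$ forces $\BE^{i}_{s}(\X)\subseteq\ZE^{i}_{N-s}(\X)$, so $\HE^{i}_{r}(\X)=0$ means exactly $\ZE^{i}_{r}(\X)\subseteq\BE^{i}_{N-r}(\X)$. (ii) $\Ker(d^{n}_{r})=\ZE^{n}_{r}(\X)\cap\Ker d^{n}_{\X}=\ZE^{n}_{1}(\X)$ and $\im(d^{n}_{r})=\BE^{n+1}_{1}(\X)\cap\ZE^{n+1}_{r-1}(\X)$. (iii) The displayed square is a pull-back, i.e.\ $\ZE^{n}_{r}(\X)=(d^{n}_{r+1})^{-1}\bigl(\ZE^{n+1}_{r-1}(\X)\bigr)$ inside $\ZE^{n}_{r+1}(\X)$. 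For part (1), the crucial observation is that composing a chain
\[
X^{i-N+r}=\ZE^{i-N+r}_{N}(\X)\xrightarrow{\,d^{i-N+r}_{N}\,}\ZE^{i-N+r+1}_{N-1}(\X)\longrightarrow\cdots\xrightarrow{\,d^{i-1}_{r+1}\,}\ZE^{i}_{r}(\X)
\]
of the maps $d^{\bullet}_{\bullet}$ gives the map $x\mapsto d^{i-1}_{\X}\cdots d^{i-N+r}_{\X}(x)$, whose image is exactly $\BE^{i}_{N-r}(\X)\subseteq\ZE^{i}_{r}(\X)$. Hence, if every $d^{n}_{r}$ is an epimorphism then each such composite is epi, so $\BE^{i}_{N-r}(\X)=\ZE^{i}_{r}(\X)$ for all $i,r$, that is, $\X\in\CE_{N}(\CC)$. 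Conversely, if $\X\in\CE_{N}(\CC)$, then $\HE^{i}_{N-1}(\X)=0$ makes $d^{n}_{N}$ epi for every $n$ (its image is $\BE^{n+1}_{1}(\X)=\ZE^{n+1}_{N-1}(\X)$); since the square of the statement presents $d^{n}_{r}$ as the pull-back of $d^{n}_{r+1}$ along the monomorphism $\iota^{n+1}_{r-1}$, and pull-backs of epimorphisms are epimorphisms, a downward induction on $r$ gives that every $d^{n}_{r}$ is epi.

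For part (2), assume first that $\X$ is null-homotopic and fix $s^{i}\colon X^{i}\to X^{i-N+1}$ with $\id_{X^{i}}=\sum_{j=0}^{N-1}d^{i-(N-1-j)}_{\X,\{N-1-j\}}\,s^{i+j}\,d^{i}_{\X,\{j\}}$. For $1\le r\le N$ I would set
\[
E^{i}_{r}:=\sum_{j=0}^{r-1}d^{i-(N-1-j)}_{\X,\{N-1-j\}}\,s^{i+j}\,d^{i}_{\X,\{j\}}\colon X^{i}\longrightarrow X^{i}
\]
and check two things. For $r<N$ each summand factors through a nonempty composite of differentials ending at degree $i$, so its image lies in $\BE^{i}_{N-1-j}(\X)\subseteq\ZE^{i}_{j+1}(\X)\subseteq\ZE^{i}_{r}(\X)$; together with the trivial case $r=N$ (where $E^{i}_{N}=\id_{X^{i}}$) this gives $\im E^{i}_{r}\subseteq\ZE^{i}_{r}(\X)$. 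And for $x\in\ZE^{i}_{r}(\X)$ the terms with $j\ge r$ in the homotopy relation vanish (they contain $d^{i+r-1}_{\X}\cdots d^{i}_{\X}$), so $E^{i}_{r}(x)=x$. Thus $E^{i}_{r}$ is an idempotent with image $\ZE^{i}_{r}(\X)$, i.e.\ a retraction $X^{i}\twoheadrightarrow\ZE^{i}_{r}(\X)$; restricting it to $\ZE^{i}_{r+1}(\X)$ retracts $\iota^{i}_{r}$, so $\iota^{i}_{r}$ is a split monomorphism. The same computation shows $\ZE^{i}_{r}(\X)\subseteq\BE^{i}_{N-r}(\X)$, so $\X\in\CE_{N}(\CC)$ and, by part (1), every $d^{i}_{r}$ is epi; then $0\to\ZE^{i}_{1}(\X)\to\ZE^{i}_{r}(\X)\xrightarrow{\,d^{i}_{r}\,}\ZE^{i+1}_{r-1}(\X)\to0$ is a short exact sequence which splits because $E^{i}_{1}|_{\ZE^{i}_{r}(\X)}$ retracts the kernel inclusion, so $d^{i}_{r}$ is a split epimorphism.

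Conversely, assume every $d^{n}_{r}$ is split epi and every $\iota^{n}_{r}$ split mono, and choose sections $\sigma^{n}_{r}$ of the $d^{n}_{r}$. I would build a degreewise decomposition $X^{n}=\bigoplus_{r=1}^{N}C^{n}_{r}$ with $\ZE^{n}_{r}(\X)=\bigoplus_{s\le r}C^{n}_{s}$ and with $d^{n}_{\X}$ restricting to an isomorphism $C^{n}_{r}\xrightarrow{\ \sim\ }C^{n+1}_{r-1}$ for $r\ge2$ and to $0$ on $C^{n}_{1}=\ZE^{n}_{1}(\X)$, via the recipe $C^{n}_{1}:=\ZE^{n}_{1}(\X)$ and $C^{n}_{r}:=\sigma^{n}_{r}\bigl(C^{n+1}_{r-1}\bigr)$; an induction on $r$, using $\Ker(d^{n}_{r})=\ZE^{n}_{1}(\X)$, the split inclusions, and the pull-back identity (iii), shows that $C^{n}_{r}$ is a complement of $\ZE^{n}_{r-1}(\X)$ in $\ZE^{n}_{r}(\X)$ on which $d^{n}_{\X}$ behaves as claimed. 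This exhibits $\X$ as an internal direct sum of disk $N$-complexes $D^{b}_{N}\bigl(\ZE^{b}_{1}(\X)\bigr)$, $b\in\Z$, and each such disk is null-homotopic (the contracting homotopy of $D^{b}_{N}(M)$ has only the component $s^{b}=\id_{M}$ nonzero), so $\X$ is null-homotopic.

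The hard part is the bookkeeping in part (2): verifying carefully that $E^{i}_{r}$ is an idempotent with image exactly $\ZE^{i}_{r}(\X)$, and, in the converse direction, that the transported subobjects $C^{n}_{r}$ assemble --- simultaneously over all degrees $n$ --- into a coherent system of complements realizing $\X$ as a sum of disks. The identity $\Ker(d^{n}_{r})=\ZE^{n}_{1}(\X)$ (rather than $\ZE^{n}_{r-1}(\X)$) is an easy trap, and routing the splitting of $d^{i}_{r}$ through part (1) instead of extracting it by hand from $s$ is what keeps that step short.
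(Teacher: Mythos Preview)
Your proof is correct and self-contained. The paper does not actually prove this lemma: its entire proof reads ``See \cite[Lemma 3.9]{IKM}'', deferring to Iyama--Kato--Miyachi. So there is no in-paper argument to compare yours against.

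A couple of remarks on your write-up. In the converse of part~(1), you invoke that the pull-back of an epimorphism is an epimorphism; this is indeed a standard fact in abelian categories, but it would be worth citing explicitly since it is the hinge of the downward induction. In the converse of part~(2), your construction of the complements $C^{n}_{r}=\sigma^{n}_{r}(C^{n+1}_{r-1})$ uses only the splittings of the $d^{n}_{r}$; the split-mono hypothesis on $\iota^{n}_{r}$ enters implicitly when you need the top case $r=N$ (i.e.\ a complement of $\ZE^{n}_{N-1}(\X)$ inside $X^{n}$), which is not literally one of the $d^{n}_{r}$ displayed in the square. Making that boundary case explicit --- either by noting that $d^{n}_{N}\colon X^{n}\to\ZE^{n+1}_{N-1}(\X)$ is split epi, or by using the splitting of $\iota^{n}_{N-1}$ directly --- would remove any ambiguity about the range of $r$. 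Your verification that $\ZE^{n}_{r-1}(\X)\oplus C^{n}_{r}=\ZE^{n}_{r}(\X)$ relies on the pull-back identity (iii) to identify $\ZE^{n}_{1}(\X)\oplus\sigma^{n}_{r}\bigl(\ZE^{n+1}_{r-2}(\X)\bigr)$ with $\ZE^{n}_{r-1}(\X)$; this step is right but is exactly the ``bookkeeping'' you flag as the hard part, and deserves one more line of justification.
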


\begin{remark}
\label{remark 001}
An $N$-complex $\X$ is $N$-exact if and only if there exists some $r$ with $1\leq r\leq N-1$ such that $\HE^{i}_{r}(\X)=0$ for each integer $i$, see \cite{Kap96}.
\end{remark}

\begin{remark}
\label{remark 002}
By use of lemma \ref{lemma 001}, it is easy to show that whenever $\X$ is an $N$-exact complex then $\Sigma \X $ and $\Sigma^{-1} \X$ are $N$-exact complexes.
\end{remark}
\begin{lemma}
\label{lemma01}
Let $\CA$ be an abelian category. For an object $M \in \CA$, $1\leq r \leq N-1$ and $\X,\Y \in \C_N(\CA)$ we have the following isomorphism:
\begin{itemize}
\item[(1)]$\Ext_{\CA}^{1}(M,Y^n)\cong \Ext_{\C_N(\CA)}^1(D_{N}^{n+N-1}(M),\Y)$
\item[(2)]$\Ext_{\CA}^{1}(X^n,M)\cong \Ext_{\C_N(\CA)}^1(\X,D_{N}^{n}(M))$
\item[(3)]$\Ext_{\C_N(\CA)}^{1}(D_{r}^{n+r-1}(M),\Y)\cong \Ext_{\CA}^{1}(M,\ZE^{n}_{r}(\Y))$ whenever $\Y$ is an N-exact complex.
\item[(4)]$\Ext_{\C_N(\CA)}^{1}(\X,D_{r}^{n}(M))\cong \Ext_{\CA}^{1}(\CK^{r}_{n}(\X),M)$ whenever $\X$ is an N-exact complex.
\end{itemize}
\end{lemma}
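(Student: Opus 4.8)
The plan is to reduce each isomorphism to the behaviour of $\Hom$ and $\Ext^1$ on a single distinguished $N$-complex of the form $D^j_i(M)$, using the adjunction-type properties these complexes enjoy. For (1) and (2), the starting point is the classical fact (which holds for ordinary complexes and extends verbatim to $N$-complexes) that $D^{n+N-1}_N(M)$ and $D^n_N(M)$ corepresent/represent the degree-$n$ component functor on suitable exactness classes. Concretely, I would first establish the $\Hom$-level adjunctions
\[
\Hom_{\C_N(\CA)}(D^{n+N-1}_N(M),\Y)\cong\Hom_{\CA}(M,Y^n),\qquad
\Hom_{\C_N(\CA)}(\X,D^n_N(M))\cong\Hom_{\CA}(X^n,M),
\]
which are immediate from the definition of $D^j_i(M)$ (a morphism out of $D^{n+N-1}_N(M)$ is forced, by the identity differentials, to be determined by its value $M\to Y^n$, and dually). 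Since $D^{n+N-1}_N(M)$ is, componentwise, a coproduct of copies of $M$ with split differentials, applying these isomorphisms to a short exact sequence $0\to M'\to M\to M''\to 0$ in $\CA$ (resp. a projective resolution) and comparing the long exact $\Ext$-sequences gives (1) and (2). Alternatively, and more cleanly, one notes $D^{n+N-1}_N(-)$ is exact and sends enough projectives to objects that are $\Ext^1_{\C_N(\CA)}(-,\Y)$-acyclic for the relevant $\Y$, so the derived functors agree in degree $1$; I expect to phrase this via a dimension-shift using a presentation $0\to K\to P\to M\to 0$.

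For (3) and (4), the extra hypothesis that $\Y$ (resp.\ $\X$) is $N$-exact is what makes the truncated complex $D^{n+r-1}_r(M)$ (with only $r$ nonzero spots, $1\le r\le N-1$) interact well with the cycle object $\ZE^n_r(\Y)$ (resp.\ the cocycle object $\CK^r_n(\X)$). The key computation is again at the $\Hom$-level: a morphism $D^{n+r-1}_r(M)\to\Y$ is determined by a map $M\to Y^n$ whose image lands in $\ZE^n_r(\Y)=\Ker(d^{n+r-1}\cdots d^n)$, because the composite of the $r$ differentials along $D^{n+r-1}_r(M)$ is forced to be zero only after $N$ steps but here we only have $r<N$ spots, so the constraint is exactly that $d^n_{\{r\}}$ kills the image; this yields
\[
\Hom_{\C_N(\CA)}(D^{n+r-1}_r(M),\Y)\cong\Hom_{\CA}(M,\ZE^n_r(\Y)),
\]
and dually $\Hom_{\C_N(\CA)}(\X,D^n_r(M))\cong\Hom_{\CA}(\CK^r_n(\X),M)$. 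To pass from $\Hom$ to $\Ext^1$ I would resolve: take $0\to K\to P\to M\to 0$ with $P$ projective in $\CA$, apply $D^{n+r-1}_r(-)$ (which is exact and preserves the relevant acyclicity), and chase the resulting commutative ladder of long exact sequences, using part (1)/(2) of the lemma to identify the $\Ext^1$ against $D_N$-complexes. The functor $\Y\mapsto\ZE^n_r(\Y)$ is left exact, and the crucial point is that on an $N$-exact complex $\Y$ the snake-lemma diagrams from Lemma~\ref{lemma 001} (the pull-back squares among the $\ZE^n_r$) guarantee that $\ZE^n_r(-)$ applied to a componentwise-split (hence to a projective-resolution-type) short exact sequence of $N$-exact complexes stays exact, so no higher-derived correction terms appear.

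The main obstacle, I expect, is precisely verifying that the cycle functor $\ZE^n_r(-)$ (and dually the cocycle functor $\CK^r_n(-)$) is exact enough on the subcategory of $N$-exact complexes for the dimension-shift argument to close — i.e., that forming $r$-cycles of an $N$-exact complex commutes with the relevant short exact sequences up to degree $1$. This is where Lemma~\ref{lemma 001}(1) (equivalently Remark~\ref{remark 001}) is essential: $N$-exactness says each $d^n_r\colon\ZE^n_r(\X)\to\ZE^{n+1}_{r-1}(\X)$ is an epimorphism, and the pull-back squares let one build a finite filtration of $\ZE^n_r(\Y)$ whose subquotients are again cycle objects, reducing everything to the case $r=1$ where $\ZE^n_1(\Y)=\Ker d^n$ behaves like an ordinary kernel. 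Once that homological-algebra bookkeeping is set up, parts (3) and (4) follow from (1) and (2) by the same naturality argument, and the whole lemma is a formal consequence of the defining universal properties of the complexes $D^j_i(M)$.
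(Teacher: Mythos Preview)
The paper does not actually prove this lemma; its entire proof is the sentence ``See \cite[section 4]{GH10} or \cite[Lemma 2.2]{YC} for more details.'' So there is no in-paper argument to compare against, only the referenced ones.

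Your $\Hom$-level identifications are correct and are the standard adjunctions attached to the complexes $D^j_i(-)$. The weak point in your proposal is the passage from $\Hom$ to $\Ext^1$: the dimension shift you describe (``take $0\to K\to P\to M\to 0$ with $P$ projective in $\CA$'') presupposes that $\CA$ has enough projectives, which is not among the hypotheses. For (1) and (2) this assumption is unnecessary: $D_N^{n+N-1}(-)$ is an exact left adjoint of the exact evaluation functor $\Y\mapsto Y^n$, and for any adjunction of exact functors the Yoneda $\Ext^1$ groups match directly (push out an extension along the counit, apply evaluation for the inverse). For (3) and (4) the right adjoint $\Y\mapsto\ZE^n_r(\Y)$ is only left exact, and this is exactly where the $N$-exactness hypothesis enters. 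The route taken in the cited references is not to resolve $M$, but to use the short exact sequence of $N$-complexes
\[
0\lrt D^{n+N-1}_{N-r}(M)\lrt D^{n+N-1}_N(M)\lrt D^{n+r-1}_r(M)\lrt 0
\]
together with the short exact sequence $0\to\ZE^n_r(\Y)\to Y^n\to\ZE^{n+r}_{N-r}(\Y)\to 0$ in $\CA$ (valid precisely because $\Y$ is $N$-exact), compare the resulting long exact sequences using part (1), and read off the isomorphism. Your plan to control exactness of $\ZE^n_r(-)$ via the pull-back squares of Lemma~\ref{lemma 001} can be made to work, but it is heavier than this direct comparison and, as written, still leans on the existence of projectives.
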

\begin{proof}
See \cite[section 4]{GH10} or \cite[Lemma 2.2]{YC} for more details. 

\end{proof}

\begin{definition}
A pair of classes $(\CF,\CC)$ in abelian category $\CA$ is a cotorsion pair if the following conditions hold:
\begin{itemize}
\item[1.]$\Ext^1_\CA(F,C)=0$ for all $F\in \CF$ and $C\in \CC$
\item[2.]If $\Ext^1_\CA(F,X)=0$ for all $F\in \CF$, then $X\in \CC$.
\item[3.]If $\Ext^1_\CA(Y,C)=0$ for all $C\in \CC$, then $Y\in \CF$.
\end{itemize}
\end{definition}
We think of a cotorsion pair $(\CF,\CC)$ as being ``orthogonal with respect to $\Ext^1_\CA$. This is often
expressed with the notation $\mathcal{F}^\perp=\mathcal{C}$ and $\mathcal{F} = {}^\perp\mathcal{C}$. A cotorsion pair $(\CF,\CC)$ is called complete
if for every $A \in \CA$ there exist exact sequences
\[0 \rt Y \rt W \rt A \rt 0 \ \ \ {\rm and} \ \ \ 0 \rt A  \rt Y' \rt W' \rt 0,\]
where $W, W'\in \mathcal{F}$ and $Y, Y' \in  \mathcal{C}$.
We note that if $\mathcal{S}$ is any class of objects of $\mathcal{A}$ and if $\mathcal{S}^\perp=\mathcal{B}$ and $\mathcal{A}={}^\perp\mathcal{B}$, then $(\mathcal{A}, \mathcal{B})$ is a cotorsion pair. We say it is the cotorsion pair cogenerated by $\mathcal{S}$. If there is a set $\mathcal{S}$ that cogenerates $(\mathcal{A}, \mathcal{B})$, then we say that $(\mathcal{A}, \mathcal{B})$ is cogenerated by a set.



\subsection{Hill lemma:}
\label{subsect 01}
Let $\CG$ be a Grothendieck category endowed with a faithful functor $U:\CG\rightarrow\textbf{Set}$, where \textbf{Set} denotes the category of sets. By abuse of notation we write $x\in\CG$ instead of
$x \in U(G)$, for any object $G$ in $\CG$. Analogously $|G|$ will denote the cardinality of $U(G)$. We
will also assume that there exists an infinite regular cardinal $\lambda$ such that for each $G\in \CG$ and
any set $S\subseteq G$ with $|S|<\lambda$, there is a subobject $X\subseteq G$ such that $S \subseteq X \subseteq G$ and $|X|<\lambda$.

Given an infinite regular cardinal $\kappa$. Recall that an object $X \in \CG$ is called $\kappa$-presentable if the functor $\Hom_\CG(X,-):\CG \rightarrow \textbf{Ab}$
preserves $\kappa$-filtered colimits. An object $X\in \CG$ is called $\kappa$-generated whenever $\Hom_\CG(X,-)$ preserves $\kappa$-filtered colimits of monomorphisms. By our assumption it is easy to see that 
$$ |X|<\lambda \,\,\,\,\, \Leftrightarrow \,\,\,\,\, X\,\, \text{is}\,\,  \lambda\text{-presentable}\,\,\,\,\, \Leftrightarrow \,\,\,\,\, X\,\, \text{is}\,\,  \lambda\text{-generated } $$

\begin{definition}
Let $\CS$ be a class of objects of $\CG$. An object $X\in\CG$ is called $\CS$-filtered if
there exists a well-ordered direct system $(X_\alpha, i_\alpha\beta|\alpha<\beta\leq \sigma)$ indexed by an ordinal number $\sigma$ such that
\begin{itemize}
\item[(a)]$X_0=0$ and $X_\sigma=X$,
\item[(b)]For each limit ordinal $\mu\leq \sigma$, the colimit of system $(X_\alpha, i_{\alpha\beta}|\alpha<\beta\leq \mu)$ is precisely
$X_\mu$, the colimit morphisms being $i_{\alpha\mu}:X_\alpha \rt X_\mu$,
\item[(c)]$i_{\alpha\beta}$ is a monomorphism in $\CG$ for each $\alpha < \beta \leq \sigma$,
\item[(d)]$\Coker i_{\alpha\alpha+1}\in \CS$ for each $\alpha<\sigma$
\end{itemize}
\end{definition}
The direct system $(X_\alpha, i_\alpha\beta)$ is then called an $\CS$-filtration of $X$. The class of all $\CS$-filtered objects in $\CG$ is denoted by Filt-$\CS$.

The Hill lemma is a way of creating a plentiful supply of a module with a given filtration, but where these submodules have nice properties. This result, whose idea is due to Hill \cite{Hill} and version of which appeared in \cite{FL}. In the following we state the Hill lemma for Grothendieck category which is known as the generalized Hill lemma, see \cite[Theorem 2.1]{St}.
\begin{theorem}
\label{theorem Hill}
Let $\CG$ be as above and $\kappa$ be a regular infinite cardinal such that $\kappa\geq \lambda$. Suppose that $\mathcal{S}$ is a set of $\kappa$-presentable objects and $X$ is an object possessing an $\CS$-filtration $(X_\alpha\,\,|\,\, \alpha\leq \sigma)$ for some ordinal $\sigma$. Then there is a complete sublattice $\mathcal{L}$ of $(\mathcal{P}(\sigma),\cup,\cap)$ and $\ell: \mathcal{L}\rightarrow \text{Subobj}(X)$ which assigns to each $S\in \mathcal{L}$ a subobject $\ell(S)$ of $X$, such that the following hold:
\begin{itemize}
\item[(H1)] For each $\alpha\leq\sigma$ we have $\alpha=\lbrace \gamma \, \, | \, \, \gamma<\alpha\rbrace\in \mathcal{L}$ and $\ell(\alpha)=X_\alpha$.
\item[(H2)] If $(S_i)_{i\in I}$ is a family of elements of $\mathcal{L}$, then $\ell(\cup S_i)=\sum \ell(S_i)$ and $\ell(\cap S_i)=\cap \ell(S_i)$.
\item[(H3)] If $S,T\in \mathcal{L}$ are such that $S\subseteq T$, then the object $N=\ell(T)/\ell(S)\in \text{Filt-}\CS$.
\item[(H4)] For each $\kappa$-presentable subobject $Y\subseteq X$, there is $S\in \mathcal{L}$ of cardinal $< \kappa$( so $\ell(S)$ is $\kappa$-presentable by (H3)) such that $Y\subseteq\ell(S)\subseteq X$.
\end{itemize}
\end{theorem}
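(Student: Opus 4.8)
The plan is to follow the classical construction of Hill, adapted to the Grothendieck setting: from the given $\CS$-filtration $(X_\al\mid\al\le\sigma)$ I would first manufacture a family of ``small'' subobjects $(H_\al)_{\al<\sigma}$, then build a closure operator on the index ordinal $\sigma$, and finally let $\CL$ be the lattice of its closed subsets, with $\ell(S)=\sum_{\al\in S}H_\al$. \emph{Step 1 (small witnesses).} For each $\al<\sigma$ the quotient $X_{\al+1}/X_\al$ lies in $\CS$, hence is $\kappa$-presentable, and by the standing smallness hypothesis on $\CG$ (subsets of cardinality $<\la\le\kappa$ sit inside subobjects of cardinality $<\la$, and objects of cardinality $<\kappa$ are $\kappa$-presentable) I would choose a $\kappa$-presentable subobject $H_\al\subseteq X_{\al+1}$ for which the composite $H_\al\hookrightarrow X_{\al+1}\twoheadrightarrow X_{\al+1}/X_\al$ is an epimorphism, so $H_\al+X_\al=X_{\al+1}$. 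A transfinite induction then gives $X_\al=\sum_{\gamma<\al}H_\gamma$ for every $\al\le\sigma$ (successor step from the choice of $H_\al$, limit step from continuity of the filtration); in particular $|H_\al|<\kappa$, and every subobject of $H_\al$ --- notably $H_\al\cap X_\al$ --- is $\kappa$-presentable.

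\emph{Step 2 (closure operator and the lattice).} By recursion on $\al<\sigma$ I would attach to $\al$ a set $N_\al\subseteq\al$ with $|N_\al|<\kappa$ such that $H_\al\cap X_\al\subseteq\sum_{\gamma\in N_\al}H_\gamma$ and $\gamma\in N_\al\Rightarrow N_\gamma\subseteq N_\al$. This is possible because $H_\al\cap X_\al$ is $\kappa$-presentable while $X_\al=\sum_{\gamma<\al}H_\gamma$ is the $\kappa$-directed union of the subobjects $\sum_{\gamma\in A}H_\gamma$ over $A\subseteq\al$ with $|A|<\kappa$, so there is such an $A_0$ with $H_\al\cap X_\al\subseteq\sum_{\gamma\in A_0}H_\gamma$; one then iterates $A_{n+1}=A_n\cup\bigcup_{\gamma\in A_n}N_\gamma$ and sets $N_\al=\bigcup_n A_n$, regularity of $\kappa$ keeping $|N_\al|<\kappa$. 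Now put
\[
\CL=\{\,S\subseteq\sigma\mid N_\al\subseteq S\ \text{for every}\ \al\in S\,\},\qquad \ell(S)=\sum_{\al\in S}H_\al\quad(S\in\CL).
\]
Then $\CL$ is visibly closed under arbitrary unions and intersections, hence a complete sublattice of $(\mathcal{P}(\sigma),\cup,\cap)$, and each ordinal $\al\le\sigma$ lies in $\CL$ since $N_\gamma\subseteq\gamma<\al$ for $\gamma<\al$; note also that each $N_\al$ is itself in $\CL$, being self-closed.

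\emph{Step 3 (verification of (H1)--(H4)).} (H1) is now immediate, $\ell(\al)=\sum_{\gamma<\al}H_\gamma=X_\al$ by Step 1. In (H2) the identity $\ell(\bigcup S_i)=\sum\ell(S_i)$ is formal; the substance is the modular-type identity $\ell(S)\cap\ell(T)=\ell(S\cap T)$ for $S,T\in\CL$, which by (H1) reduces to $\ell(S)\cap X_\al=\ell(S\cap\al)$ for all ordinals $\al$ and which I would prove by transfinite induction on $\al$ (the limit step uses that intersection commutes with directed unions of subobjects in $\CG$), the decisive point being that when $\al\in S$ the overlap $H_\al\cap X_\al$ is absorbed into $\ell(N_\al)\subseteq\ell(S\cap\al)$ because $N_\al\subseteq S\cap\al$. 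Granting this, for $S\subseteq T$ in $\CL$ the chain obtained by adjoining the $H_\al$, $\al\in T\setminus S$, in the order of $\sigma$ exhibits $\ell(T)/\ell(S)$ with consecutive quotients isomorphic to the corresponding $X_{\al+1}/X_\al\in\CS$, so $\ell(T)/\ell(S)\in\Filt\text{-}\CS$, which is (H3). For (H4), a $\kappa$-presentable $Y\subseteq X=\sum_{\al<\sigma}H_\al$ factors through some $\sum_{\al\in A_0}H_\al$ with $|A_0|<\kappa$ (again using $\kappa$-directedness of those sums); closing $A_0$ off under $N$ yields $S\in\CL$ with $|S|<\kappa$ and $Y\subseteq\ell(S)$, and by (H3) $\ell(S)=\ell(S)/\ell(\emptyset)$ is $\CS$-filtered in fewer than $\kappa$ steps by $\kappa$-presentable objects, hence $\kappa$-presentable since $\kappa$ is regular.

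\emph{Main obstacle.} The delicate part is Step 2 together with the intersection formula $\ell(S)\cap\ell(T)=\ell(S\cap T)$: the closures $N_\al$ must be chosen simultaneously small enough to deliver (H4) and rich enough to absorb every ``downward'' overlap $H_\al\cap X_\al$ so that $\ell$ is a lattice homomorphism, and this must remain coherent across the whole transfinite recursion. The only genuine novelty over the classical module-theoretic Hill lemma is the systematic replacement of ``submodule generated by fewer than $\kappa$ elements'' by ``$\kappa$-presentable subobject'', which is precisely what the hypotheses $\kappa\ge\la$ and the existence of the faithful $U:\CG\to\textbf{Set}$ legitimize.
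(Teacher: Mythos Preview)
The paper does not give its own proof of this theorem: it merely states the result and refers to \cite[Theorem~2.1]{St} (\v{S}\v{t}ov\'{i}\v{c}ek) for the argument. So there is nothing in the paper to compare your proposal against directly.

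That said, your proposal is precisely the standard proof of the generalized Hill lemma as carried out in \cite{St}: choose $\kappa$-presentable liftings $H_\alpha\subseteq X_{\alpha+1}$ of the filtration quotients, build the closure sets $N_\alpha$ by a countable iteration controlled by regularity of $\kappa$, take $\CL$ to be the closed subsets of $\sigma$ and $\ell(S)=\sum_{\alpha\in S}H_\alpha$, and verify (H1)--(H4) via the key identity $\ell(S)\cap X_\alpha=\ell(S\cap\alpha)$. Your identification of the main obstacle is accurate. One small point: your claimed reduction of the general intersection formula $\ell(S)\cap\ell(T)=\ell(S\cap T)$ to the special case $T=\alpha$ is not quite ``by (H1)'' alone; rather, one writes an arbitrary $T\in\CL$ as the directed union $T=\bigcup_{\alpha\le\sigma}(T\cap\alpha)$, uses that $\ell$ and intersection both commute with this directed union in a Grothendieck category, and then applies the ordinal case on both sides. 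With that adjustment your sketch is correct and matches the cited reference.
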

Let $\mathcal{H}=\lbrace \ell(S)\, | \, S\in \mathcal{L}\rbrace$. We call $\mathcal{H}$ as the Hill class of subobjects of $X$ relative to $\kappa$.

\begin{corollary}
If $N\in \mathcal{H}$ and $M$ is a $\kappa$-presentable subobject of $X$, then there exists $P\in \mathcal{H}$ such that $N+M\subseteq P$ and $P/N$ is $\kappa$-presentable.
\end{corollary}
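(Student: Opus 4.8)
The plan is to pull $N$ and $M$ back into the lattice $\mathcal{L}$ furnished by the Hill lemma (Theorem~\ref{theorem Hill}), combine them there using that $\mathcal{L}$ is closed under unions, and then push the result back into the Hill class $\mathcal{H}$.

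First I would write $N=\ell(S_0)$ for some $S_0\in\mathcal{L}$, which is possible by the very definition of $\mathcal{H}$. Since $M$ is a $\kappa$-presentable subobject of $X$, property (H4) supplies an $S_1\in\mathcal{L}$ with $|S_1|<\kappa$, with $M\subseteq\ell(S_1)\subseteq X$, and with $\ell(S_1)$ $\kappa$-presentable. Set $S:=S_0\cup S_1$; as $\mathcal{L}$ is a sublattice of $(\mathcal{P}(\sigma),\cup,\cap)$ we get $S\in\mathcal{L}$, so $P:=\ell(S)\in\mathcal{H}$. Using (H2) I would then record the two identities $\ell(S_0\cup S_1)=\ell(S_0)+\ell(S_1)$ and $\ell(S_0)=\ell(S_0\cap S)=\ell(S_0)\cap\ell(S)$; the first gives $P=N+\ell(S_1)\supseteq N+M$, and the second gives $N\subseteq P$, so $P/N$ is defined and contains $(N+M)/N$.

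It then remains to see that $P/N$ is $\kappa$-presentable, and this is the only substantive point. One way is to combine (H2) with the second isomorphism theorem in $\CG$ to get
\[
P/N=(N+\ell(S_1))/N\cong\ell(S_1)/(\ell(S_0)\cap\ell(S_1))=\ell(S_1)/\ell(S_0\cap S_1),
\]
exhibiting $P/N$ as a quotient of the $\kappa$-presentable object $\ell(S_1)$. A cleaner, more self-contained route is to apply (H3) to $S_0\subseteq S$: this gives $P/N\in\text{Filt-}\mathcal{S}$, and inspection of the filtration shows it may be taken to have fewer than $\kappa$ steps (its nontrivial cokernels are indexed by a subset of $S_1$), each step lying in $\mathcal{S}$ and hence $\kappa$-presentable; since $\kappa$ is regular, a transfinite extension of fewer than $\kappa$ many $\kappa$-presentable objects is again $\kappa$-presentable --- exactly the closure property already invoked in the parenthetical clause of (H4) --- so $P/N$ is $\kappa$-presentable. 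The main obstacle, such as it is, is precisely this closure of $\kappa$-presentability under the relevant short transfinite extensions (equivalently, under the quotient above); it rests on the regularity of $\kappa$ and on $\kappa\ge\lambda$, and with it in hand the rest is routine manipulation of the lattice identities (H1)--(H2).
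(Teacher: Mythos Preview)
Your argument is correct and follows the paper's proof essentially verbatim: use (H4) to cover $M$ by some $\ell(S_1)$ with $|S_1|<\kappa$, take $P=\ell(S_0\cup S_1)=N+\ell(S_1)$ via (H2), and deduce $\kappa$-presentability of $P/N$ from (H3) together with the fact that a transfinite extension of fewer than $\kappa$ many $\kappa$-presentable objects is $\kappa$-presentable --- precisely the content of \cite[Corollary~A.5]{St}, which the paper simply cites where you spell it out. One small caution: your ``Method~1'' as stated only exhibits $P/N$ as a quotient of a $\kappa$-presentable object, which is not enough in general without also knowing the kernel $\ell(S_0\cap S_1)$ is $\kappa$-generated; your Method~2 is the clean argument and is what the paper uses.
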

\begin{proof}
By using theorem \ref{theorem Hill} (H4), we can find $S\in \mathcal{L}$ of cardinal $< \kappa$ such that $M\subseteq \ell(S)$. Denoting $W=\ell(S)$, $P=N+W$ and combining (H2) and (H3) of theorem \ref{theorem Hill} with \cite[corollary A.5]{St} we observe that $P\in \mathcal{H}$ and $P/N$ is $\kappa$-presentable.
\end{proof}

We need the following lemma and theorem. 
\begin{lemma}
\label{lemma2.14}
Let $\kappa$ be a regular infinite cardinal such that $\kappa>\lambda$. Let $\X\subseteq\Y$ be $N$-exact complexes. For each $i\in \Z$, let $M^i$ be a $\kappa$-presentable object of $Y^i$. Then there exists an $N$-exact complex $\E$ such that $\X \subseteq \E\subseteq \Y$ and for each $i\in \Z$, $M^i+X^i\subseteq E^i$ and the object $E^i/X^i$ is $\kappa$-presentable.
\end{lemma}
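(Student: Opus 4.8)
The plan is to obtain $\E$ as the union of an increasing $\omega$-chain of subcomplexes $\E_0\subseteq\E_1\subseteq\cdots$ with $\X\subseteq\E_m\subseteq\Y$ and each $E^i_m/X^i$ being $\kappa$-presentable, in which the passage from $\E_m$ to $\E_{m+1}$ repairs the possible failure of $N$-exactness. At each stage two invariants must be kept: that $\E_m$ is genuinely a subcomplex of $\Y$ (that is, $d^i_{\Y}(E^i_m)\subseteq E^{i+1}_m$), and the size bound. An $\omega$-length iteration will be enough because $d^N=0$: closing a subobject up under the differential reaches only $N-1$ further degrees and uses finite sums, so it never pushes a cardinality past $\kappa$, while a countable directed union of $\kappa$-presentable objects is again $\kappa$-presentable since $\kappa>\lambda\geq\omega$ is regular. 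For the base step I would set
\[
E^i_0=X^i+\sum_{k=0}^{N-1}d^{i-1}_{\Y}d^{i-2}_{\Y}\cdots d^{i-k}_{\Y}\bigl(M^{i-k}\bigr),
\]
the $k=0$ summand being $M^i$ itself. Since any $N$ consecutive differentials of $\Y$ compose to $0$, a direct check gives $d^i_{\Y}(E^i_0)\subseteq E^{i+1}_0$, so $\E_0=(E^i_0,d^i_{\Y})$ is a subcomplex with $\X\subseteq\E_0\subseteq\Y$ and $M^i+X^i\subseteq E^i_0$; moreover $E^i_0/X^i$ is a homomorphic image of the $\kappa$-presentable object $\coprod_{k=0}^{N-1}M^{i-k}$, hence is itself $\kappa$-presentable.

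For the inductive step, suppose $\E_m$ has been constructed. By Lemma \ref{lemma 001}(1), to prove that a subcomplex $\W\subseteq\Y$ is $N$-exact it suffices to show that the map $\ZE^n_r(\W)\rt\ZE^{n+1}_{r-1}(\W)$ induced by $d^n_{\Y}$ is an epimorphism for all $n$ and $r$; and since $\W$ carries the differentials of $\Y$, one has $\ZE^n_r(\W)=\ZE^n_r(\Y)\cap W^n$. For each $n$ and $r$ the subobject $\ZE^{n+1}_{r-1}(\Y)\cap E^{n+1}_m$ contains $\ZE^{n+1}_{r-1}(\X)=\ZE^{n+1}_{r-1}(\Y)\cap X^{n+1}$, and the quotient embeds into the $\kappa$-presentable object $E^{n+1}_m/X^{n+1}$; I would fix a generating set $G^{n,r}_m$ of $\ZE^{n+1}_{r-1}(\Y)\cap E^{n+1}_m$ over $\ZE^{n+1}_{r-1}(\X)$ of cardinality $<\kappa$. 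Since $\Y$ is $N$-exact, each $g\in G^{n,r}_m$ admits a preimage $x_g\in\ZE^n_r(\Y)$ with $d^n_{\Y}(x_g)=g$. Letting $V^n\subseteq Y^n$ be the subobject generated by all these $x_g$ (over the finitely many $r$, for the cycles in degree $n+1$), I would set $E^n_{m+1}=E^n_m+\sum_{k=0}^{N-1}d^{n-1}_{\Y}\cdots d^{n-k}_{\Y}(V^{n-k})$. As in the base step, $\E_{m+1}=(E^n_{m+1},d^n_{\Y})$ is then a subcomplex, and since only $<\kappa$ generators were adjoined in each degree (and closed up over finitely many degrees), each $E^n_{m+1}/X^n$ remains $\kappa$-presentable. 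Finally I would put $E^i=\bigcup_m E^i_m$ and $\E=(E^i,d^i_{\Y})$; then $\E$ is a subcomplex, $\X\subseteq\E\subseteq\Y$, $M^i+X^i\subseteq E^i_0\subseteq E^i$, and $E^i/X^i$ is a directed union of an $\omega$-chain of $\kappa$-presentable objects, hence $\kappa$-presentable.

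To see that $\E$ is $N$-exact I would show, for all $n$ and $r$, that the induced map $\ZE^n_r(\E)\rt\ZE^{n+1}_{r-1}(\E)$ is an epimorphism, and then invoke Lemma \ref{lemma 001}(1). Since $\ZE^{n+1}_{r-1}(\E)=\ZE^{n+1}_{r-1}(\Y)\cap E^{n+1}$, which by axiom AB5 in $\CG$ equals the directed union of the subobjects $Z_m:=\ZE^{n+1}_{r-1}(\Y)\cap E^{n+1}_m$, it is enough to prove $d^n_{\Y}(\ZE^n_r(\E))\supseteq Z_m$ for every $m$. For this, consider the subobject $P\subseteq\ZE^n_r(\Y)$ generated by $\ZE^n_r(\X)$ together with the preimages $x_g$ of the elements $g\in G^{n,r}_m$. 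Then $P\subseteq\ZE^n_r(\Y)\cap E^n_{m+1}=\ZE^n_r(\E_{m+1})\subseteq\ZE^n_r(\E)$, whereas $d^n_{\Y}(P)$ contains both $d^n_{\Y}(\ZE^n_r(\X))=\ZE^{n+1}_{r-1}(\X)$ (the equality because $\X$ is $N$-exact, by Lemma \ref{lemma 001}(1)) and every $g\in G^{n,r}_m$, so $d^n_{\Y}(P)\supseteq Z_m$ by the defining property of $G^{n,r}_m$. Hence $d^n_{\Y}(\ZE^n_r(\E))\supseteq Z_m$ for all $m$, so $d^n_{\Y}(\ZE^n_r(\E))=\ZE^{n+1}_{r-1}(\E)$; thus the required map is an epimorphism and $\E$ is $N$-exact.

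I expect the main obstacle — as opposed to routine verification — to be the inductive step, where one must close up under the differential so as to remain a subcomplex, simultaneously keep all cardinalities below $\kappa$, and still be certain that an $\omega$-indexed iteration already yields an $N$-exact union. All three points rely on the same two facts: $d^N=0$ trivializes differential-closure after $N-1$ degrees, and $\kappa$ is regular with $\kappa>\lambda\geq\omega$, so that quotients, finite coproducts, and $\omega$-indexed directed unions of $\kappa$-presentable objects are again $\kappa$-presentable. Everything else — the subcomplex-closure computations and the AB5 commutation of intersections with directed unions — is standard Grothendieck-category bookkeeping.
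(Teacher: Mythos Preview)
Your proof is correct and follows the same zig-zag strategy as the paper: build $\E$ as an $\omega$-union of subcomplexes, at each stage throwing in $<\kappa$ many preimages of cycles so that the union becomes $N$-exact, while controlling size by regularity of $\kappa$ and the fact that differential-closure only spreads over $N-1$ degrees. There are two cosmetic differences worth noting. First, the paper reduces immediately to the case $\X=0$ by passing to $\overline{\Y}=\Y/\X$, runs the zig-zag there, and then pulls back (using that an extension of $N$-exact complexes is $N$-exact); you instead work relative to $\X$ throughout, carrying $\ZE^{n+1}_{r-1}(\X)$ along as the ``base'' over which you pick generators --- this avoids the extension-closure step but makes the inductive bookkeeping marginally heavier. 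Second, the paper verifies $N$-exactness of the union via the defining condition $\ZE^i_r(\mathbf C_n)\subseteq\BE^i_{N-r}(\mathbf C_{n+1})$, whereas you invoke the epimorphism criterion of Lemma~\ref{lemma 001}(1); these are equivalent and the arguments translate into one another directly. Both routes rely on the same concrete-category assumption from \S\ref{subsect 01} to make the element/cardinality language (``pick a preimage $x_g$'', ``$<\kappa$ generators'') meaningful.
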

\begin{proof}
We use the zig-zag technique to construct $\E$. First, consider the particular case $\X=0$. We will construct $\E$ as the union of an increasing sequence of $N$-subcomplexes
$$\mathbf{C}_0\subseteq \mathbf{C}_1 \subseteq \mathbf{C}_2 \subseteq \cdots  $$
of $\E$ where $M^i\subseteq C_0^i$, $|C_n^i| \leq \kappa$ and $\ZE_r^i(\mathbf{C}_n)\subseteq \BE_{N-r}^i(\mathbf{C}_{n+1})$ for all $1\leq r \leq N-1$ and $i,n\in \Z$. Then if $\E=\cup_{n\in \Z}\mathbf{C}_n$, we have $\ZE_r^i(\E)=\cup_{n\in \Z}\ZE_r^i(\mathbf{C}_n)\subseteq \cup_{n\in \Z}\BE_{N-r}^i(\mathbf{C}_n)\subseteq \BE_{N-r}^i(\E)$. So $\ZE_r^i(\E)=\BE_{N-r}^i(\E)$ for all $1 \leq r \leq N-1$ and $i\in \Z$, hence $\E\in \mathcal{E}_N(\CG)$. In this case clearly $M^i\subseteq E^i$ and $|\E| \leq \kappa$, since $|\mathbf{C}_n| \leq \kappa$. Let $\mathbf{C}_0=(C_0^i)$ be such that $C_0^{i}=M^i+\sum_{k=1}^{N-1} d^{i-N+k}_{\lbrace  N-k \rbrace}(M^{i-N+k})$. Then $\mathbf{C}_0$ is a subcomplex of $\Y$ and clearly $M^i\subseteq C_0^i$ and $|\mathbf{C}_0| \leq \kappa$. Having constructed $\mathbf{C}_n$ with $|\mathbf{C}_n|\leq \kappa$, we want to construct $\mathbf{C}_{n+1}$ with $\mathbf{C}_n\subseteq \mathbf{C}_{n+1}$ such that $\ZE_r^i(\mathbf{C}_n)\subseteq\BE_{N-r}^i(\mathbf{C}_{n+1})$ and $|\mathbf{C}_{n+1}|\leq \kappa$. 

For each $i\in \Z$, we have $\ZE_r^i(\mathbf{C}_n)\subseteq \ZE_r^i(\Y)=\BE_{N-r}^i(\Y)$. So  by our assumption on $\kappa$ we can find a subobject $S^{i-N+r}\subseteq Y^{i-N+r}$ such that $\ZE_r^i(\mathbf{C}_n)\subseteq d^{i-N+r}_{\lbrace N-r \rbrace}(S^{i-N+r})$ for all $1\leq r \leq N-1$. Now define $C_{n+1}^i= C_n^i+S^i+\sum_{k=1}^{N-1} d^{i-N+k}_{\lbrace  N-k \rbrace}(S^{i-N+k})$ for all $i\in \Z$. Clearly $\mathbf{C}_n\subseteq \mathbf{C}_{n+1}$ and by construction $\ZE_r^i(\mathbf{C}_n)\subseteq \BE_{N-r}^i(\mathbf{C}_{n+1})$ so finally we have the  desire $\E\subseteq\Y$.
In case $\X\neq 0$ let $\overline{\Y}=\Y/\X $ and $\overline{M^i}=(M^i+X^i)/X^i$. According to the previous part, there is an $N$-exact complex $\overline{\E}\subseteq \overline{\Y}$ and for each $i\in \Z$, $\overline{M^i}\subseteq \overline{E^i}$, and the object $\overline{E^i}$ is $\kappa$-presentable. Then $\overline{\E}=\E/\X$ for an $N$-exact subcomplex $\X\subseteq\E\subseteq\Y$, and $\E$ clearly has the required properties. 
\end{proof}

\begin{theorem}
\label{theorem 2.12}
Let $\kappa$ be an uncountable regular cardinal such that $\kappa > \lambda$. Let $(\CF, \CC)$ be a
cotorsion pair in $\CG$ such that $\CF$ contains a family of $\lambda$-presentable generators of $\CG$. Then the following conditions are equivalent:
\begin{itemize}
\item[(1)]The cotorsion pair $(\CF, \CC)$ is cogenerated by a class of $\kappa$-presentable objects in $\CG$
\item[(2)]Every object in $\CF$ is $\CF^\kappa$-filtered, where $\CF^\kappa$ is the class of all $\kappa$-presentable objects in $\CF$.
\end{itemize}
\end{theorem}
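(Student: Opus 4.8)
The plan is to follow the well-established strategy, due essentially to Eklof and used repeatedly in Stovicek's work on cotorsion pairs, of proving the two implications $(2)\Rightarrow(1)$ and $(1)\Rightarrow(2)$ separately, with the Hill lemma (Theorem \ref{theorem Hill}) as the main engine for the harder direction. The implication $(2)\Rightarrow(1)$ should be the easy one: if every object of $\CF$ is $\CF^\kappa$-filtered, then $\CF^\kappa$ is a set (up to isomorphism, since there are only set-many $\kappa$-presentable objects in $\CG$), and by Eklof's lemma ${}^\perp((\CF^\kappa)^\perp)$ is closed under $\CF^\kappa$-filtrations; since it contains $\CF^\kappa$ and every object of $\CF$ is such a filtration, we get $\CF\subseteq {}^\perp((\CF^\kappa)^\perp)$, while the reverse inclusion is automatic as $\CF^\kappa\subseteq\CF=\CF^{\perp\perp}$ side. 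Hence $(\CF,\CC)$ is cogenerated by the \emph{set} $\CF^\kappa$ of $\kappa$-presentable objects, which is stronger than $(1)$.

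For $(1)\Rightarrow(2)$, suppose $(\CF,\CC)$ is cogenerated by a class $\CS$ of $\kappa$-presentable objects; by standard arguments one may assume $\CS$ is a set and, enlarging it, that $\CS$ contains the given $\lambda$-presentable generators (which lie in $\CF$, hence automatically in $\CF^\kappa$). Let $F\in\CF$ be arbitrary. The key point is a theorem of the Eklof--Trlifaj type: since $(\CF,\CC)$ is cogenerated by the set $\CS$, every object of $\CF=\Filt(\CS)$-ish — more precisely, every object of $\CF$ is a direct summand of an $\CS$-filtered object. Actually, I would run the transfinite construction directly: build an increasing continuous chain $0=F_0\subseteq F_1\subseteq\cdots\subseteq F_\sigma=F$ of subobjects of $F$ such that each quotient $F_{\alpha+1}/F_\alpha$ is $\kappa$-presentable and lies in $\CF$. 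To do this, fix a set of generators and well-order a generating subset of $|F|$; at successor steps, given $F_\alpha\subsetneq F$, pick an element not yet captured, engulf it into a $\kappa$-presentable subobject $Y$, and then \emph{correct} $Y$ so that the quotient lands in $\CF$: because $\CS$ is a set of $\kappa$-presentable objects cogenerating $(\CF,\CC)$, one builds $F_{\alpha+1}\supseteq F_\alpha + Y$ with $F_{\alpha+1}/F_\alpha$ a $\kappa$-presentable object that is itself $\CS$-filtered, hence in $\Filt(\CS)\subseteq\CF$. This is exactly where the Hill lemma enters: one applies Theorem \ref{theorem Hill} to an ambient $\CS$-filtration of a bigger object (or uses that $F$ itself, being in $\CF$, sits inside an $\CS$-filtered object by completeness of the cotorsion pair cogenerated by $\CS$) and invokes (H3) and (H4) together with the Corollary following Theorem \ref{theorem Hill} to produce the needed $\kappa$-presentable, $\CS$-filtered quotients. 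Continuity at limit ordinals is immediate since $\CG$ is Grothendieck, and at the top one must check the chain exhausts $F$, which follows from the regularity of $\kappa$ and the choice of the well-ordering.

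The main obstacle is the successor step in the direction $(1)\Rightarrow(2)$: arranging that $F_{\alpha+1}/F_\alpha$ is simultaneously $\kappa$-presentable \emph{and} belongs to $\CF$. Enlarging $Y$ to kill obstructions to membership in $\CF$ naively could increase the cardinality past $\kappa$; the Hill lemma is precisely the tool that lets one stay within the $\kappa$-presentable world while controlling the subobject lattice, so the proof hinges on setting up $F$ (or a suitable overobject) with an honest $\CS$-filtration first — this is where one uses that $(\CF,\CC)$, being cogenerated by a set, is complete, so that $F\in\CF={}^\perp(\CS^\perp)$ is $\CS$-filtered up to a summand, or better, is a direct summand of an $\CS$-filtered object $G$; applying Hill to $G$ and then transporting the resulting filtration-with-good-sublattice down to $F$ via the splitting gives the $\CF^\kappa$-filtration of $F$. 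I expect the bookkeeping for the splitting-and-transport, and the verification that a $\kappa$-presentable object which is $\CS$-filtered indeed lies in $\CF$ (using that $\CF$ is closed under $\CS$-filtrations, i.e.\ Eklof's lemma), to be the only genuinely delicate points; the rest is the routine zig-zag/transfinite induction already illustrated in the proof of Lemma \ref{lemma2.14}.
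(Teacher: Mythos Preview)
The paper does not actually prove this theorem: its entire proof reads ``We refer to \cite[Theorem 2.1]{EEI}.'' So there is nothing in the paper to compare your argument against beyond that citation.

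Your outline is essentially the standard proof one finds in that literature (e.g.\ \cite{EEI}, \cite{St}, and the module-theoretic precursors). The direction $(2)\Rightarrow(1)$ is handled exactly as you describe, via Eklof's lemma and the fact that $\CG$ has only a set of isomorphism classes of $\kappa$-presentable objects. For $(1)\Rightarrow(2)$ your plan is correct in spirit: reduce to a cogenerating \emph{set} $\CS$ of $\kappa$-presentable objects (adding the $\lambda$-presentable generators), invoke Eklof--Trlifaj completeness so that every $F\in\CF$ is a direct summand of an $\CS$-filtered object $G$, and then use the Hill lemma on $G$. The step you flag as delicate---transporting the filtration through the splitting---is precisely the generalized Kaplansky theorem (see e.g.\ \cite[Corollary~2.4]{St}): direct summands of $\CS$-filtered objects, with $\CS$ a set of $\kappa$-presentables, are themselves filtered by $\kappa$-presentable summands of $\CS$-filtered objects; since $\CF$ is closed under filtrations (Eklof) and under summands (automatic for ${}^\perp\CC$), these lie in $\CF^\kappa$. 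So your identified ``main obstacle'' is genuine and is resolved by exactly the tool you point to; you should cite that Kaplansky-type result explicitly rather than leaving it as ``transport via the splitting.''
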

\begin{proof}
We refer to \cite[ Theorem 2.1]{EEI}.
\end{proof}

\section{Induced cotorsion pairs in $\C_N(\CA)$}
\label{section 3}
In this section we show some typical ways of getting complete cotorson pairs in $\C_N(\CA)$. One method for creating such pairs is by starting with two cotorsion pairs in $\CA$ and then using these pairs to find related pairs in $\C_N(\CA)$. We start with the following proposition:
\begin{proposition}
\label{prop31}
Let $\CA$ be an abelian category with injective cogenerator $J$ and $\X$ be an $N$-complex. If every chain map $\X\rightarrow D^{i-r+1}_r(J)$ extends to $D^{i+N-r}_N(J)$ for each $i\in \Z$ and $1\leq r\leq N-1$ then $\X$ is an $N$-exact complex.
\end{proposition}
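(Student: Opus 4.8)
The plan is to prove the contrapositive in effect, but more directly: we show $\X$ satisfies the epimorphism criterion of Lemma \ref{lemma 001}(1), namely that $d^n_r:\ZE^n_r(\X)\to\ZE^{n+1}_{r-1}(\X)$ is an epimorphism for all $n$ and all $2\le r\le N-1$, which forces $\X\in\CE_N(\CC)$. Since $\CA$ has an injective cogenerator $J$, a map of objects $g:A\to B$ is an epimorphism precisely when the induced map $\Hom_\CA(B,J)\to\Hom_\CA(A,J)$ is a monomorphism; equivalently, every $A\to J$ that kills $\Ker g$ factors through $g$. So the strategy is to translate the hypothesis ``every chain map $\X\to D^{i-r+1}_r(J)$ extends along the inclusion $D^{i-r+1}_r(J)\hookrightarrow D^{i+N-r}_N(J)$'' into exactly this factorization statement for the maps $d^n_r$.

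First I would unwind what a chain map $\X\to D^{j}_r(J)$ is: because $D^j_r(J)$ is concentrated in degrees $j-r+1,\dots,j$ with all differentials equal to $1_J$, such a chain map is determined by a single morphism $X^{j}\to J$ (or equivalently $X^{j-r+1}\to J$), subject to the condition that the composite $X^{j-r}\to X^{j-r+1}\to\cdots\to X^j\to J$ — i.e. precomposition with $d^{j-r}_{\X,\{r\}}$ read appropriately — vanishes, together with the corresponding vanishing at the top. Concretely, Hom-ing $\X$ into $D^{j}_r(J)$ detects maps out of the relevant cycle/cokernel objects; using Lemma \ref{lemma01}(2), (4) (with $M=J$) one gets $\Hom_{\C_N(\CA)}(\X, D^{j}_r(J))\cong\Hom_\CA(\CK^{r}_{j}(\X),J)$-type identifications, at least when the truncations behave, but rather than invoke these I would argue by hand at the level of objects. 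Then I would identify the inclusion $D^{i-r+1}_r(J)\hookrightarrow D^{i+N-r}_N(J)$ as the obvious one (identity $J$ in the overlapping degrees), so that ``extends'' means: a morphism $X^{i-r+1}\to J$ that is compatible with $\X$ in the sense of being a chain map into the short disc extends to a morphism out of the long disc $D^{i+N-r}_N(J)$, and a chain map into the long disc is the same data as an arbitrary morphism $X^{i+N-r}\to J$ with no vanishing constraint (since $D^{i+N-r}_N(J)$ is $N$-exact-free, it has no cycles). The upshot I am aiming for: the extension hypothesis says precisely that every morphism $\ZE^{n+1}_{r-1}(\X)\to J$ which restricts to zero on the subobject that is the image of $d^n_r$ — wait, rather, every morphism out of the relevant cokernel-of-$d^n_r$ object into $J$ — is zero, hence $\Coker d^n_r$ has no nonzero maps to the injective cogenerator, hence $\Coker d^n_r=0$, hence $d^n_r$ is epi.

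The main obstacle, and the part I would spend the most care on, is the bookkeeping identifying which cycle/coborder objects correspond to $D^j_r(J)$ and to the inclusion $D^{i-r+1}_r(J)\hookrightarrow D^{i+N-r}_N(J)$: one must match indices so that ``chain maps $\X\to D^{i-r+1}_r(J)$'' really do correspond to ``morphisms $\ZE^{?}_{?}(\X)\to J$ annihilating $\im d^?_?$'' for the correct shifted indices, and so that extendability along that particular disc inclusion is equivalent to such a morphism being automatically zero. I expect the commutative pull-back square displayed just before Definition of $N$-exact (relating $\ZE^n_r$, $\ZE^{n+1}_{r-1}$, $\ZE^{n+1}_r$) and Lemma \ref{lemma 001} to be the technical engine here. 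Once the dictionary is set up, the conclusion is immediate: vanishing of all $\Hom(\Coker d^n_r, J)$ for the cogenerator $J$ gives $\Coker d^n_r=0$ for all $n,r$, so by Lemma \ref{lemma 001}(1) $\X$ is $N$-exact. I would also remark (via Remark \ref{remark 001}) that it suffices to get the epimorphism property for a single value of $r$, which could streamline the index-matching.
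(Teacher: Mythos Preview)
Your approach is in the right spirit and can be made to work, but the paper takes a more direct route. Rather than going through the epimorphism criterion of Lemma~\ref{lemma 001}(1), the paper shows directly that a single homology group $\HE^i_1(\X)$ vanishes for every $i$ (using only the $r=1$ case of the hypothesis), and then invokes Remark~\ref{remark 001}. Concretely: a chain map $\X\to D^i_1(J)$ is the same as a morphism $\phi\colon X^i\to J$ vanishing on $\BE^i_1(\X)$; lifting it to $g\colon \X\to D^{i+N-1}_N(J)$ forces $\phi=g^{i+1}\circ d^i_\X=\cdots=g^{i+N-1}\circ d^i_{\X,\{N-1\}}$, so $\phi$ factors through the $(N-1)$-fold differential. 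Since $J$ is a cogenerator, this yields the desired inclusion of cycles in boundaries.

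There is also a small gap in your proposed dictionary: a chain map $\X\to D^j_r(J)$ is determined by a morphism $X^j\to J$ annihilating $\BE^j_r(\X)$, i.e.\ a map out of the \emph{cokernel} $\CK^j_r(\X)=X^j/\BE^j_r(\X)$, not out of a cycle object $\ZE^?_?(\X)$. You cannot identify these two without already knowing $\X$ is $N$-exact. To carry out your plan you would have to observe that $\Coker(d^n_s\colon \ZE^n_s\to\ZE^{n+1}_{s-1})$ embeds as a subobject of $X^{n+1}/\BE^{n+1}_1$, extend maps to $J$ using injectivity, and then apply the $r=1$ lifting hypothesis --- at which point you have reproduced the paper's argument with Lemma~\ref{lemma 001}(1) layered on top. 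So your route is valid but strictly more work; the paper's direct computation of $\HE^i_r(\X)=0$ is the shorter path.
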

\begin{proof}
By remark \ref{remark 001}, we show that $\HE^i_1(\X)=\ZE^i_1(\X)/\BE^i_{N-1}(\X)$ is zero. Consider the monomorphism map $\imath: X^i/\BE^i_1(\X)\hookrightarrow J$. Since $\BE^i_{N-1}(\X)\subseteq \BE^i_1(\X)$, so we set $t$ as the following composition
$$\xymatrix{t: X^i/\BE^i_{N-1}(\X) \ar@{^{(}->}[r]^{q} & X^i/\BE^i_1(\X) \ar@{^{(}->}[r]^{\,\,\quad\imath}& J }$$
Now consider $f:\X\rightarrow D^i_1(J)$ with $f^n=0$ for $n\neq i$ and $f^i$ is the composition of morphism $\pi^i:X^i\rightarrow X^i/\BE^i_{N-1}(\X)$ and $t$. It is easy to check that $f$ is a morphism of $N$-complexes. By assumption we can extend this morphism to a morphism $g:\X\rightarrow D^{i+N-1}_N(J)$, i.e. we have the following commutative diagram
$$\xymatrix{ & \X \ar[d]^{f} \ar[dl]_g & \\
            D^{i+N-1}_N(J) \ar[r]^h & D^i_1(J) \ar[r] & 0  }$$
Put $d^i=q^i \pi^i$. Then we have $t\pi^i=f^i=h^i g^i=g^i=g^{i-1}d^i=g^{i-1}q^i\pi^i$. Hence $t=g^{i-1}q^i$, since $\pi^i$ is epimorphism. This implies that $q^i$ is monomorphism, therefore $\ZE^i_1(\X)=\ker d^i=\ker(q^i\pi^i)=\ker(\pi^i)=\BE^i_{N-1}(\X)$. Hence $\HE^i_1(\X)=0$.
\end{proof}

\begin{definition}	
Let $\CA$ be an abelian category. Given two classes of objects $\CX$ and $\CF$ in
$\CA$ with $\CF\subseteq \CX$. We denote by $\widetilde{\CF}_{\CX_N}$ the class of all $N$-exact complexes $\F$ with each degree
$F^i\in \CF$ and each cycle $\ZE_{r}^{i}(\X) \in \CX$ for all $1\leq r\leq N-1$ and $i\in \Z$.
\end{definition}
\begin{proposition}
\label{prop 3.3}
Let $\CA$ be an abelian category with injective cogenerator $J$. Let $(\CF,\CC)$ and $(\CX,\CY)$ be two cotorsion pairs with $\CF\subseteq \CX$ in $\CA$. Then
$( \widetilde{\CF}_{\CX_N}, (\widetilde{\CF}_{\CX_N})^\perp)$ is a cotorsion pair in $\C_N(\CA)$ and $(\widetilde{\CF}_{\CX_N})^\perp$ is the class of all $N$-complexes $\mathbf{C}$
for which each $C^i \in \CC$ and for each map $\F\rightarrow \mathbf{C}$ is null-homotopic whenever $\F\in\widetilde{\CF}_{\CX_N}$.
\end{proposition}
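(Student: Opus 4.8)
Write $\mathcal{D}$ for the class described in the statement, i.e. the $N$-complexes $\mathbf{C}$ with $C^{i}\in\CC$ for all $i$ and with every chain map $\F\to\mathbf{C}$, $\F\in\widetilde{\CF}_{\CX_N}$, null-homotopic. The plan is to verify that $(\widetilde{\CF}_{\CX_N},\mathcal{D})$ satisfies the three axioms of a cotorsion pair: axiom~1 then gives $\mathcal{D}\subseteq(\widetilde{\CF}_{\CX_N})^{\perp}$ and axiom~2 gives the reverse inclusion, so $(\widetilde{\CF}_{\CX_N})^{\perp}=\mathcal{D}$, which is the second assertion, and the first assertion follows. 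Before anything else I would record that $\widetilde{\CF}_{\CX_N}$ is closed under $\Sigma$ and $\Sigma^{-1}$: $N$-exactness is Remark~\ref{remark 002}; the degreewise condition holds because $\CF$ is closed under finite coproducts; and a short computation with the explicit differentials of $\Sigma^{\pm1}$ exhibits each cycle $\ZE^{m}_{r}(\Sigma^{\pm1}\F)$ as an object built by finitely many extensions from the terms $F^{i}$ and the cycles $\ZE^{i}_{r}(\F)$, all of which lie in $\CX$; since $\CX$ is closed under extensions, these cycles lie in $\CX$ as well.

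For axiom~1, take $\F\in\widetilde{\CF}_{\CX_N}$, $\mathbf{C}\in\mathcal{D}$ and an extension $0\to\mathbf{C}\to\mathbf{Z}\to\F\to0$. Since $F^{i}\in\CF$ and $C^{i}\in\CC$ we have $\Ext^{1}_{\CA}(F^{i},C^{i})=0$, so the extension is split in each degree and defines a class in $\Ext^{1}_{dw}(\F,\mathbf{C})$; by the proof of Lemma~\ref{lemma02} this class is the image of a morphism $f\colon\Sigma^{-1}\F\to\mathbf{C}$ and vanishes precisely when $f$ is null-homotopic. As $\Sigma^{-1}\F\in\widetilde{\CF}_{\CX_N}$ and $\mathbf{C}\in\mathcal{D}$, $f$ is null-homotopic and the extension splits. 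For axiom~2, I would show that $\Ext^{1}_{\C_N(\CA)}(\F,X)=0$ for every $\F\in\widetilde{\CF}_{\CX_N}$ forces $X\in\mathcal{D}$. For $M\in\CF$ the full disk $D^{n+N-1}_{N}(M)$ is $N$-exact, has all terms in $\CF$, and all its cycles equal $M$ or $0$, so it lies in $\widetilde{\CF}_{\CX_N}$ (using $\CF\subseteq\CX$); hence $\Ext^{1}_{\C_N(\CA)}(D^{n+N-1}_{N}(M),X)=0$, and Lemma~\ref{lemma01}(1) gives $\Ext^{1}_{\CA}(M,X^{n})=0$ for all $M\in\CF$, i.e. $X^{n}\in\CC$. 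For the homotopy condition, $\Ext^{1}_{dw}(\F,X)=0$ and Lemma~\ref{lemma02} give $\Hom_{\K_N(\CA)}(\F,\Sigma X)=0$ for all $\F\in\widetilde{\CF}_{\CX_N}$; applying this to $\Sigma\G$ in place of $\F$ (closure under $\Sigma$) and using that $\Sigma$ is an auto-equivalence of $\K_N(\CA)$ yields $\Hom_{\K_N(\CA)}(\G,X)=0$, so every chain map from $\widetilde{\CF}_{\CX_N}$ into $X$ is null-homotopic.

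Axiom~3 is the crux: if $\Y$ satisfies $\Ext^{1}_{\C_N(\CA)}(\Y,\mathbf{C})=0$ for every $\mathbf{C}\in\mathcal{D}$, I must deduce $\Y\in\widetilde{\CF}_{\CX_N}$. For $M\in\CC$ the full disk $D^{n}_{N}(M)$ is contractible, hence lies in $\mathcal{D}$; so $\Ext^{1}_{\C_N(\CA)}(\Y,D^{n}_{N}(M))=0$ and Lemma~\ref{lemma01}(2) gives $\Ext^{1}_{\CA}(Y^{n},M)=0$ for all $M\in\CC$, i.e. $Y^{n}\in{}^{\perp}\CC=\CF$. To get $N$-exactness I would use Proposition~\ref{prop31}: the injective cogenerator $J$ lies in $\CC$ (injectives are in the right half of every cotorsion pair), and for any $N$-exact $\F$ every chain map $\F\to D^{j}_{s}(J)$ is null-homotopic --- such a map kills a boundary subobject which by $N$-exactness equals the relevant cycle, so it factors through a monomorphism into a term of $\F$ and extends along it by injectivity of $J$; the general $s$ reduces to $s=1$ via the degreewise-split sequences $0\to D^{j}_{s-1}(J)\to D^{j}_{s}(J)\to D^{j-s+1}_{1}(J)\to0$ and the closure of $\mathcal{D}$ under degreewise-split extensions. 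Thus all disks built from $J$ lie in $\mathcal{D}$; in particular the kernel $D^{i+N-1}_{N-1}(J)$ of the canonical surjection $D^{i+N-1}_{N}(J)\twoheadrightarrow D^{i}_{1}(J)$ lies in $\mathcal{D}$, so $\Ext^{1}_{\C_N(\CA)}(\Y,D^{i+N-1}_{N-1}(J))=0$, every chain map $\Y\to D^{i}_{1}(J)$ lifts through that surjection, and Proposition~\ref{prop31} shows $\Y$ is $N$-exact. Finally, $\CF\subseteq\CX$ gives $\CY\subseteq\CC$; for $M\in\CY$ the disks $D^{j}_{s}(M)$ have all terms in $\CC$, and every chain map $\F\to D^{j}_{s}(M)$ with $\F\in\widetilde{\CF}_{\CX_N}$ is null-homotopic by the same argument, the obstruction to extending along the monomorphism now lying in an $\Ext^{1}_{\CA}$-group whose first argument is a cycle of $\F$, which is in $\CX={}^{\perp}\CY$, hence zero. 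So $D^{j}_{s}(M)\in\mathcal{D}$, whence $\Ext^{1}_{\C_N(\CA)}(\Y,D^{j}_{s}(M))=0$; since $\Y$ is now $N$-exact, Lemma~\ref{lemma01}(4) together with the identification of $\CK^{r}_{n}(\Y)$ with a cycle of $\Y$ gives $\Ext^{1}_{\CA}(\ZE^{m}_{r}(\Y),M)=0$ for all $M\in\CY$, so $\ZE^{m}_{r}(\Y)\in\CX$ and $\Y\in\widetilde{\CF}_{\CX_N}$.

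The main obstacle I anticipate is the $N$-exactness step in axiom~3: deducing exactness from $\Ext$-vanishing is not a formal manipulation, and it hinges on Proposition~\ref{prop31} together with an accurate determination of which disk-type complexes over $J$, and which kernels of the canonical surjections among them, lie in $\mathcal{D}$; the auxiliary claim that maps from $N$-exact complexes into such disks are null-homotopic is where the $N$-complex homology identities (e.g. $\BE^{i}_{r}=\ZE^{i}_{N-r}$ on $N$-exact complexes, cf. Remark~\ref{remark 001}) enter. The other point that should be checked carefully rather than waved at is the closure of $\widetilde{\CF}_{\CX_N}$ under $\Sigma^{\pm1}$, which is used throughout to move between $\Ext^{1}_{dw}$ and the null-homotopy condition via Lemma~\ref{lemma02}.
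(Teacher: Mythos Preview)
Your proposal is correct and follows essentially the same route as the paper: establish the three cotorsion-pair axioms for $(\widetilde{\CF}_{\CX_N},\mathcal{D})$ using closure of $\widetilde{\CF}_{\CX_N}$ under $\Sigma^{\pm1}$, the disk complexes $D^j_s(-)$ together with Lemma~\ref{lemma01}, the identification $\Ext^1_{dw}\cong\Hom_{\K_N}(-,\Sigma-)$ from Lemma~\ref{lemma02}, and Proposition~\ref{prop31} for $N$-exactness in axiom~3. The only visible difference is that where the paper invokes Lemma~\ref{lemma01}(4) directly to obtain $\Ext^1_{\C_N(\CA)}(\F,D^i_r(Y))=0$ for $Y\in\CY$ (and hence $D^i_r(Y)\in\mathcal{D}$ once axiom~2 is in hand), you instead build the null-homotopies by hand via extension along monomorphisms; both work, though the paper's route is shorter. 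One small formal point: Proposition~\ref{prop31} is stated for all $1\le r\le N-1$, but you only explicitly verify the lifting for $r=1$; since you have already shown $D^j_s(J)\in\mathcal{D}$ for every $1\le s\le N-1$, the same $\Ext^1$-vanishing argument with the kernel $D^{i+N-r}_{N-r}(J)$ handles every $r$, so this is easily patched.
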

\begin{proof}
Let $\CW$ be the class of all $N$-complexes $\mathbf{C}$ for which each $C^i\in \CC$ and for which each map $\F\rt \mathbf{C}$ is null-homotopic whenever $\F\in \widetilde{\CF}_{\CX_N}$. It is easy to check that $\widetilde{\CF}_{\CX_N}$ is closed under $\Sigma$ and $\Sigma^{-1}$. Hence, by \cite[Corollary 2.16]{YD} we can say that $\CW$ is closed under taking suspensions. Now suppose that $C\in \CC$ and $\F\in \widetilde{\CF}_{\CX_N}$. By lemma \ref{lemma01} (2) we have $\Ext^1_{\C_N(\CA)}(\F, D^{i}_{N}(C))\cong \Ext^1_\mathcal{A}(F^{i},C)=0$. But $\Ext^1_{dw}(\F,D^i_{N}(C))=\Ext^1_{\C_N(\CA)}(\F, D^{i}_{N}(C))$, so by lemma \ref{lemma02} we can say that $D^{i}_{N}(C)$ belongs to $\CW$ for each $i\in \Z$.
 Similarly, for any $\F\in \widetilde{\CF}_{\CX_N}$, by lemma \ref{lemma01}(4) we get that $\Ext^1_{\C_N(\CA)}(\F, D^{i}_{r}(C))\cong \Ext^1_\mathcal{A}(\CK^{i}_{r}(\F),C)=0$, since $\CK^{i}_{r}(\F)=F^i/\BE^{i}_{r}(\X)\cong \ZE^{i+1}_r(\F)\in \CX$. 

Now we show that $(\widetilde{\CF}_{\CX_N},\CW)$ is a cotorsion pair. First of all suppose that $\F\in \widetilde{\CF}_{\CX_N}$ and $\W\in \CW$. By assumption any $\zeta: 0 \rt \W \rt \A \rt \F \rt 0$ as an object of $\Ext^1_{\C_N(\CA)}(\F, \W)$ is degreewise split, so belongs to $\Ext^1_{dw}(\F,\W)$. But by lemma \ref{lemma02} $\Ext^1_{dw}(\F,\W)=0$. Hence $\Ext^1_{\C_N(\CA)}(\F, \W)=0$. Next assume that $\Ext^1_{\C_N(\CA)}(\F, \A)=0$ for all $\F\in \widetilde{\CF}_{\CX_N}$. We will show that $\A\in \CW$. To this point let $Z\in \CF$. By lemma \ref{lemma01}(1) $\Ext^1_\mathcal{A}(Z,A^i)\cong\Ext^1_{\C_N(\CA)}(D^{i+N-1}_N(Z),\A )=0$, Since $D^{i+N-1}_N(Z)$ is clearly belongs to $\widetilde{\CF}_{\CX_N}$. Thus $A^i\in \CC$. Now let $u:\F\rt \A$ be a morphism in $\C_N(\CA)$ where $\F\in \widetilde{\CF}_{\CX_N}$. Clearly we have that $\Ext^1_{dw}(\F,\Sigma^{-1}\A)=\Ext^1_{dw}(\Sigma\F,\A)$ and the last group equals to 0 since $\Sigma\F\in \widetilde{\CF}_{\CX_N}$ so by lemma \ref{lemma02} we can say that $u$ is null-homotopic and hence $\A\in \CW$. Finally, assume that $\Ext^1_{\C_N(\CA)}(\A, \W)=0$ for all $\W\in \CW$. We will show that $\A\in \widetilde{\CF}_{\CX_N}$. Let $C\in \CC$. As we know before $D^i_N(C)\in \CW$, hence we have $\Ext^1_\mathcal{A}(A^i,C)\cong\Ext^1_{\C_N(\CA)}(\A,D^i_N(C) )=0$ and so $A^i\in \CF$. It is easy to check that $\CF \subseteq \CX$ if and only if $\CY\subseteq \CC$. Also we know that if $Y\in \CY$ then $D^i_r(Y)\in \CW$. So $\Ext^1_{\C_N(\CA)}(\A, D^i_r(Y))=0$. Consider  the exact sequence $0\rt D^{i+N-r}_{N-r}(J)\rt D^{i+N-r}_N(J) \rt D^{i-r+1}_r(J) \rt 0$. We apply the convariant functor $\Hom_{\C_N(\CA)}(\A,-)$ to the sequence, so we have the following exact sequence
$$ \Hom_{\C_N(\CA)}(\A,D^{i+N-r}_N(J)) \lrt \Hom_{\C_N(\CA)}(\A,D^{i-r+1}_r(J)) \lrt 0$$   
Hence, by proposition \ref{prop31} we can say that $\A$ is an $N$-exact complex. 

On the other hand $\Ext^1_\mathcal{A}(\CK^{i}_{r}(\A),Y)\cong\Ext^1_{\C_N(\CA)}(\A, D^{i}_{r}(Y))=0$. Hence $\CK^{i}_{r}(\A)\in \CX$ and therefore $\ZE^{i+1}_r(\A)\in \CX$, since $\CK^{i}_{r}(\A)\cong\ZE^{i+1}_r(\A)$. So $\A\in \widetilde{\CF}_{\CX_N}$ and we are done.
\end{proof}
We also have the following result.
\begin{proposition}
 Let $\CA$ be an abelian category with generator $G$ and $(\CF,\CC)$ and $(\CX,\CY)$ be two cotorsion pairs with $\CF\subseteq \CX$ in $\CA$. Then
$( {}^\perp(\widetilde{\CY}_{\CC_N}), \widetilde{\CY}_{\CC_N})$ is a cotorsion pair in $\C_N(\CA)$ and ${}^\perp(\widetilde{\CF}_{\CX_N})$ is the class of all $N$-complexes $\X$
for which each $X^i \in \CX$ and for each map $\X\rightarrow \Y$ is null-homotopic whenever $\Y\in\widetilde{\CY}_{\CC_N}$.
\end{proposition}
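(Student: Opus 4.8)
The plan is to treat this as the opposite-category companion of Proposition~\ref{prop 3.3}: one transposes that proof to $\CA^{\op}$, with cokernels in place of cycles and the pairs $(\CF,\CC)$, $(\CX,\CY)$ playing the appropriate dual roles. The one place where this is not purely formal is that $\CA^{\op}$ need not have an injective cogenerator --- we are only handed a generator $G$ of $\CA$ --- so the step corresponding to Proposition~\ref{prop31} must be redone. Write $\CV$ for the class in the statement: the $N$-complexes $\X$ with every $X^i\in\CX$ and $\Hom_{\K_N(\CA)}(\X,\Y)=0$ for all $\Y\in\widetilde{\CY}_{\CC_N}$; the goal is that $(\CV,\widetilde{\CY}_{\CC_N})$ is a cotorsion pair and $\CV={}^\perp(\widetilde{\CY}_{\CC_N})$. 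I would first record the routine preliminaries: $\widetilde{\CY}_{\CC_N}$ is closed under $\Sigma$ and $\Sigma^{-1}$ (its terms are finite coproducts of objects of $\CY$, its cycles are built from cycles of the given complex by finite sums and extensions and so stay in $\CC$, and $N$-exactness is stable by Remark~\ref{remark 002}), whence by \cite[Corollary~2.16]{YD} the class $\CV$ is closed under suspension; and the disk complexes land where expected --- $D^{j}_N(M)\in\CV$ for $M\in\CX$ (it is contractible in $\K_N(\CA)$, with all terms $M$ or $0$), $D^{j}_r(M)\in\CV$ for $M\in\CF$ and $1\le r\le N-1$ (its terms lie in $\CF\subseteq\CX$, and $\Ext^1_{\C_N(\CA)}(D^{j}_r(M),\Sigma^{-1}\Y)\cong\Ext^1_\CA(M,\ZE^{j-r+1}_r(\Sigma^{-1}\Y))=0$ for $\Y\in\widetilde{\CY}_{\CC_N}$ by Lemma~\ref{lemma01}(3), which with Lemma~\ref{lemma02} yields the null-homotopy), and $D^{j}_N(M)\in\widetilde{\CY}_{\CC_N}$ for $M\in\CY$ (it is $N$-exact, with terms and cycles $M$ or $0$, and $\CY\subseteq\CC$). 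Note that $D^{j}_r(M)$ with $r<N$ is not $N$-exact, so the only disk complexes on the right-hand side are the $D^{j}_N(M)$.

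The genuinely new input is the analogue of Proposition~\ref{prop31} with the injective cogenerator $J$ replaced by the generator $G$: if $\X$ is an $N$-complex such that for each $n$ every chain map $D^{n}_1(G)\to\X$ extends along the inclusion $D^n_1(G)\hookrightarrow D^n_N(G)$ to a chain map $D^n_N(G)\to\X$, then $\X$ is $N$-exact. Indeed, under $\Hom_{\C_N(\CA)}(D^n_1(G),\X)\cong\Hom_\CA(G,\ZE^n_1(\X))$ and $\Hom_{\C_N(\CA)}(D^n_N(G),\X)\cong\Hom_\CA(G,X^{n-N+1})$ the restriction map becomes post-composition with the canonical map $X^{n-N+1}\twoheadrightarrow\BE^n_{N-1}(\X)\hookrightarrow\ZE^n_1(\X)$; so the hypothesis says every map $G\to\ZE^n_1(\X)$ factors through $\BE^n_{N-1}(\X)$, and since $G$ is a generator this forces $\ZE^n_1(\X)=\BE^n_{N-1}(\X)$, i.e. $\HE^n_1(\X)=0$ for all $n$, so $\X\in\mathcal{E}_N(\CA)$ by Remark~\ref{remark 001}. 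In contrast with a literal passage to $\CA^{\op}$ (which would require $G$ projective), only the fact that $G$ is a generator is used here.

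With these in hand I would run the three cotorsion-pair conditions for $(\CV,\widetilde{\CY}_{\CC_N})$, imitating Proposition~\ref{prop 3.3}. Conditions (1) and (3) are routine: for (1), any $0\to\Y\to\mathbf{Z}\to\V\to 0$ with $\V\in\CV$, $\Y\in\widetilde{\CY}_{\CC_N}$ is degreewise split because $\Ext^1_\CA(V^i,Y^i)=0$ ($V^i\in\CX$, $Y^i\in\CY=\CX^\perp$), hence lies in $\Ext^1_{dw}(\V,\Y)\cong\Hom_{\K_N(\CA)}(\V,\Sigma\Y)=0$ (Lemma~\ref{lemma02}, $\Sigma\Y\in\widetilde{\CY}_{\CC_N}$), giving $\CV\subseteq{}^\perp(\widetilde{\CY}_{\CC_N})$; for (3), from $\Ext^1_{\C_N(\CA)}(\A,\Y)=0$ for all $\Y\in\widetilde{\CY}_{\CC_N}$ one gets $A^i\in{}^\perp\CY=\CX$ by testing $\Y=D^i_N(M)$, $M\in\CY$, through Lemma~\ref{lemma01}(2), and the null-homotopy of maps $\A\to\Y$ by the same $\Ext_{dw}\subseteq\Ext^1$ / Lemma~\ref{lemma02} device, so $\A\in\CV$ and hence ${}^\perp(\widetilde{\CY}_{\CC_N})=\CV$. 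The substantial condition is (2): assuming $\Ext^1_{\C_N(\CA)}(\V,\A)=0$ for all $\V\in\CV$, first $D^{i+N-1}_N(M)\in\CV$ ($M\in\CX$) and Lemma~\ref{lemma01}(1) give $A^i\in\CX^\perp=\CY$; next, applying $\Hom_{\C_N(\CA)}(-,\A)$ to $0\to D^{n}_1(G)\to D^n_N(G)\to D^{n-1}_{N-1}(G)\to 0$, whose quotient term $D^{n-1}_{N-1}(G)$ lies in $\CV$, shows every chain map $D^n_1(G)\to\A$ extends to $D^n_N(G)$, so $\A$ is $N$-exact by the criterion of the previous paragraph; and finally, $\A$ now being $N$-exact, $D^j_r(M)\in\CV$ ($M\in\CF$) and Lemma~\ref{lemma01}(3) force $\Ext^1_\CA(M,\ZE^m_r(\A))=0$ for all $M\in\CF$, so $\ZE^m_r(\A)\in\CF^\perp=\CC$ and $\A\in\widetilde{\CY}_{\CC_N}$.

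The step I expect to be the main obstacle is condition~(2), and within it the assertion that the given $\A$ is $N$-exact: this is the only place where the generator genuinely enters, and where the argument is not a mechanical transpose of Proposition~\ref{prop 3.3}. The delicate point is that $D^{n-1}_{N-1}(G)$ must belong to $\CV$ --- this is what makes $\Ext^1_{\C_N(\CA)}(D^{n-1}_{N-1}(G),\A)=0$ and so produces the extension --- and that forces $G$ to lie in $\CF$ (hence in $\CX$), which is the hypothesis actually at work here, exactly as in the main theorem. One must also be careful to invoke Lemma~\ref{lemma01}(3)--(4) only after the relevant complex has been shown to be $N$-exact. Everything else is bookkeeping: matching each $\Ext$-isomorphism and each null-homotopy argument of Proposition~\ref{prop 3.3} to its dual.
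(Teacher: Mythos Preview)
Your proposal is correct and follows precisely the route the paper indicates: its entire proof is the single sentence ``It is dual to the proof of Proposition~\ref{prop 3.3},'' and you have carried out that dualization in full, including the correct replacement of Proposition~\ref{prop31} by the generator-based exactness criterion. You are also right to flag that the argument needs $G\in\CF$ (so that $D^{n-1}_{N-1}(G)\in\CV$); the paper's statement omits this hypothesis, but it is exactly the assumption imposed in Theorem~\ref{theorem 3.6} and Proposition~\ref{Prop 0002}, and the strict dual of Proposition~\ref{prop 3.3} would in fact require a \emph{projective} generator (the injective cogenerator $J$ there lies automatically in $\CC\cap\CY$).
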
 
\begin{proof}
It is dual to the proof of Proposition \ref{prop 3.3}.
\end{proof}
In the papers \cite{Gil04,Gil08} Gillespie introduced some classes of complexes and find new cotorsion pairs in the category of complexes. In similar manner we can define these classes in the category of $N$-complexes. In the following, we summarize these several classes of $N$-complexes.
\begin{definition}
\label{def 35}
Let $(\mathcal{F},\mathcal{C})$ be a cotorsion pair in $\CA$. Let $\mathcal{E}_N$ be a class of $N$-exact complexes. We will consider the following subclasses of $\C_N(\CA)$:
\begin{itemize}
\item[(1)]The class of $\C_N(\mathcal{F})$ complexes (resp. $\C_N(\mathcal{C})$ complexes), consisting of all $\X \in \C_N(\CA)$ such that $X^i \in \mathcal{F}$ (resp. $X^i \in \mathcal{C}$) for each $i$. 
\item[(2)]The class of $\mathcal{F}$-$N$-complex, that we denote by $\widetilde{\mathcal{F}}_N$, consisting of all $\X \in \mathcal{E}_N$ such that $\ZE_{r}^{i}(\X) \in \mathcal{F}$ for all $r,i$.
\item[(3)] The class of $\mathcal{C}$-$N$-complex, that we denote by $\widetilde{\mathcal{C}}_N$, consisting of all $\X \in \mathcal{E}_N$ such that $\ZE_{r}^{i}(\X) \in \mathcal{C}$ for all $r,i$.
\item[(4)]The class of $\dg$-$\mathcal{F}$-$N$-complexes, that we denote by $\dg \widetilde{\mathcal{F}}_N$, consisting of all $\X \in \C_N(\mathcal{F})$ such that $\Hom_{\K_{N}(\mathcal{A})}(\X,\mathbf{C})=0$ whenever $\mathbf{C} \in \widetilde{\mathcal{C}}_N$.
\item[(4)] The class of $\dg$-$\mathcal{C}$-$N$-complexes, that we denote by $\dg \widetilde{\mathcal{C}}_N$, consisting of all $\X \in \C_N(\mathcal{C})$ such that $\Hom_{\K_{N}(\mathcal{A})}(\F,\X)=0$ whenever $\F \in \widetilde{\mathcal{F}}_N$.
\item[(5)]The class ${ex}_N(\mathcal{F})=\C_N(\mathcal{F})\cap \mathcal{E}_N$(resp. ${ex}_N(\mathcal{C})=\C_N(\mathcal{C})\cap \mathcal{E}_N$.
\end{itemize}
\end{definition}
\begin{example}
Let $\Prj R$ be the category of projective objects in $\Mod R$. Consider the cotorsion pair $(\Prj R, \Mod R)$. Then the purpose of a dg-projective $N$-complex is an $N$-complex $\mathbf{P}$ such that $P^i \in \Prj R$ and  $\Hom_{\K_{N}(R)}(\mathbf{P},\mathbf{E})=0$ for all $\mathbf{E}\in \mathcal{E}_N$. This definition is compatible with the definition 3.20 in \cite{IKM}.
\end{example}
The next corollary is contained in \cite[Theorem 3.7]{YC}. Here we present short proof of it for our case.
\begin{corollary}
\label{corollary 3.7}
Let $(\CF,\CC)$ be a cotorsion pair in $\CA$. Then $(\widetilde{\CF}_N,dg\widetilde{\CC}_N)$ and $(dg\widetilde{\CF}_N,\widetilde{\CC}_N)$ are cotorsion pairs in $\C_N(\CA)$.
\end{corollary}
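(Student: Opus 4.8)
The plan is to obtain both statements as instances of Proposition \ref{prop 3.3} and of its dual, by feeding in the \emph{same} cotorsion pair twice. That is, I would apply those propositions with the two input pairs $(\CF,\CC)$ and $(\CX,\CY)$ both taken to be $(\CF,\CC)$; the required inclusion $\CF\subseteq\CX$ then holds trivially, and what remains is only to match up the resulting classes with the ones in Definition \ref{def 35}.

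For the pair $(\widetilde{\CF}_N,dg\widetilde{\CC}_N)$, applying Proposition \ref{prop 3.3} with $(\CX,\CY)=(\CF,\CC)$ produces the cotorsion pair $(\widetilde{\CF}_{\CF_N},(\widetilde{\CF}_{\CF_N})^\perp)$, where $\widetilde{\CF}_{\CF_N}$ is by definition the class of $N$-exact complexes $\F$ with $F^i\in\CF$ and $\ZE^i_r(\F)\in\CF$ for all $i$ and all $1\le r\le N-1$. This coincides with $\widetilde{\CF}_N$ of Definition \ref{def 35}(2), because the degreewise condition is automatic: for an $N$-exact $\F$ one has $\ZE^{i+1}_{N-1}(\F)=\BE^{i+1}_1(\F)=\im(d^i_\F)$ (since $\HE^{i+1}_{N-1}(\F)=0$), so there is a short exact sequence $0\to\ZE^i_1(\F)\to F^i\to\ZE^{i+1}_{N-1}(\F)\to 0$ with both outer terms in $\CF$, and $\CF$ is closed under extensions. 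Proposition \ref{prop 3.3} also tells us $(\widetilde{\CF}_N)^\perp$ is the class of $N$-complexes $\mathbf{C}$ with each $C^i\in\CC$ for which every chain map $\F\to\mathbf{C}$ with $\F\in\widetilde{\CF}_N$ is null-homotopic; since a morphism of $N$-complexes is null-homotopic precisely when it vanishes in $\K_N(\CA)$, this says $\Hom_{\K_N(\CA)}(\F,\mathbf{C})=0$ for all $\F\in\widetilde{\CF}_N$, which together with $\mathbf{C}\in\C_N(\CC)$ is exactly the defining property of $dg\widetilde{\CC}_N$. Hence $(\widetilde{\CF}_N,dg\widetilde{\CC}_N)$ is a cotorsion pair.

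The pair $(dg\widetilde{\CF}_N,\widetilde{\CC}_N)$ is handled in the same way using the dual of Proposition \ref{prop 3.3}, again with both input pairs equal to $(\CF,\CC)$. There the class $\widetilde{\CY}_{\CC_N}$ specializes, with $\CY=\CC$, to the $N$-exact complexes with degrees and cycles in $\CC$; the same extension-closure argument (now for $\CC$) makes the degreewise clause redundant, so this class is $\widetilde{\CC}_N$. The dual proposition then identifies ${}^\perp(\widetilde{\CC}_N)$ as the class of $N$-complexes $\X$ with each $X^i\in\CF$ for which every map $\X\to\Y$ with $\Y\in\widetilde{\CC}_N$ is null-homotopic, i.e.\ $\Hom_{\K_N(\CA)}(\X,\Y)=0$; by Definition \ref{def 35} this is precisely $dg\widetilde{\CF}_N$, giving the cotorsion pair $(dg\widetilde{\CF}_N,\widetilde{\CC}_N)$.

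I do not expect a genuine obstacle here: the whole argument is a specialization of Proposition \ref{prop 3.3} and its dual together with routine bookkeeping. The only points worth flagging are (i) the elementary lemma that, for an $N$-exact complex, having all cycles in $\CF$ (resp.\ $\CC$) forces all degrees into $\CF$ (resp.\ $\CC$), which is just closure under extensions, and (ii) that Proposition \ref{prop 3.3} and its dual were established under the standing hypotheses that $\CA$ has an injective cogenerator, respectively a generator, so these should be assumed to be in force (as they are in our intended applications); if one wished to dispense with them for the present case one could instead reproduce the relevant portion of the proof of Proposition \ref{prop 3.3} directly, but that seems unnecessary.
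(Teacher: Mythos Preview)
Your proposal is correct and follows exactly the paper's approach: specialize Proposition~\ref{prop 3.3} (and its dual) by taking $(\CX,\CY)=(\CF,\CC)$, then identify $\widetilde{\CF}_{\CF_N}=\widetilde{\CF}_N$ and $(\widetilde{\CF}_{\CF_N})^\perp=dg\widetilde{\CC}_N$. The paper simply asserts these identifications with a ``clearly'', whereas you spell out the extension-closure step and the translation between null-homotopic and vanishing in $\K_N(\CA)$; your remarks on the standing hypotheses (injective cogenerator, generator) are also well taken.
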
 
\begin{proof}
We just prove one of the statements since the other is dual. If we set consider $\CF=\CX$ and $\CC=\CY$
as in proposition \ref{prop 3.3}, then we can say that $(\widetilde{\CF}_{\CF_N},(\widetilde{\CF}_{\CF_N})^\perp)$ is a cotorsion pair. But clearly $\widetilde{\CF}_{\CF_N}=\widetilde{\CF}_N$ and $(\widetilde{\CF}_{\CF_N})^\perp)=dg\widetilde{\CC}_N$.
\end{proof}
Now let $(\CP,\CA)$ and $(\CA,\CI)$ be the usual projective and injective cotorsion pairs, where $\CP$ is the class of projective, $\CI$ is the class of injective objects in $\CA$. Note that for any cotorsion pair $(\CF,\CC)$ in $\CA$ we always have inclusions $\CP\subseteq \CF$ and $\CI\subseteq\CC$. The next corollary is contained in \cite[Proposition 4.2]{YC}. In the following, we provide a brief proof of the case, with the difference that we can omit the hereditary condition on $(\CF,\CC)$.
\begin{corollary}
\label{corollary 3.8}
Let $(\CF,\CC)$ be a cotorsion pair in $\CA$. Then $(ex\widetilde{\CF}_N,(ex\widetilde{\CF}_N)^\perp)$ and $({}^\perp(ex\widetilde{\CC}_N),ex\widetilde{\CC}_N)$ are cotorsion pairs in $\C_N(\CA)$.
\end{corollary}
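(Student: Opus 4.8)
The plan is to deduce both statements from Proposition~\ref{prop 3.3} and its dual by feeding them the \emph{trivial} cotorsion pairs. Write $\CP$ and $\CI$ for the classes of projective and injective objects of $\CA$; then $(\CP,\CA)$ and $(\CA,\CI)$ are cotorsion pairs, since $\Ext^1_\CA(P,-)=0$ for projective $P$ and $\Ext^1_\CA(-,I)=0$ for injective $I$ force ${}^\perp\CA=\CP$ and $\CA^\perp=\CI$, while $\CP^\perp=\CA={}^\perp\CI$ trivially. Recall also that $\CP\subseteq\CF$ and $\CI\subseteq\CC$ for every cotorsion pair $(\CF,\CC)$, as noted before Corollary~\ref{corollary 3.7}. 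We keep the running assumptions on $\CA$ (injective cogenerator and generator) under which Proposition~\ref{prop 3.3} and its dual are stated; note that neither of those results invokes hereditariness of the given pairs.

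For the first pair, I would apply Proposition~\ref{prop 3.3} with the second cotorsion pair taken to be $(\CX,\CY)=(\CA,\CI)$; the required inclusion $\CF\subseteq\CX=\CA$ is automatic. The proposition then gives that $(\widetilde{\CF}_{\CA_N},(\widetilde{\CF}_{\CA_N})^\perp)$ is a cotorsion pair in $\C_N(\CA)$, so it only remains to identify $\widetilde{\CF}_{\CA_N}$. By definition this is the class of $N$-exact complexes $\F$ with each $F^i\in\CF$ and each cycle $\ZE^i_r(\F)\in\CA$; the latter condition is vacuous, hence $\widetilde{\CF}_{\CA_N}=\C_N(\CF)\cap\CE_N=ex\widetilde{\CF}_N$ in the notation of Definition~\ref{def 35}. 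This is the first assertion.

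For the second pair, I would apply the dual of Proposition~\ref{prop 3.3}, now assigning $(\CP,\CA)$ to the role of its \emph{first} cotorsion pair and $(\CF,\CC)$ to the role of its \emph{second} one; the hypothesis of that proposition then reads $\CP\subseteq\CF$, which again always holds. It produces the cotorsion pair $({}^\perp(\widetilde{\CC}_{\CA_N}),\widetilde{\CC}_{\CA_N})$, where the subscript $\CA$ records the second component of $(\CP,\CA)$ and the degreewise class $\CC$ records the second component of $(\CF,\CC)$. As before the cycle condition is vacuous, so $\widetilde{\CC}_{\CA_N}=\C_N(\CC)\cap\CE_N=ex\widetilde{\CC}_N$, which is the second assertion.

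The only point where I would take care is this last matching of data: one must plug the two cotorsion pairs into the dual proposition in the correct slots so that the degreewise constraint becomes $\CC$ while the cycle constraint degenerates to $\CA$. Everything else is a direct quotation of Proposition~\ref{prop 3.3} and its dual, and in particular no hereditariness of $(\CF,\CC)$ is used — which is the improvement over \cite[Proposition 4.2]{YC}.
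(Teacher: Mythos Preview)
Your proof is correct and follows essentially the same approach as the paper: apply Proposition~\ref{prop 3.3} with $(\CX,\CY)=(\CA,\CI)$ for the first pair, and its dual with the first slot $(\CP,\CA)$ for the second, then identify $\widetilde{\CF}_{\CA_N}=ex\widetilde{\CF}_N$ and $\widetilde{\CC}_{\CA_N}=ex\widetilde{\CC}_N$. The paper only writes out one direction and declares the other dual; your explicit matching of the slots in the dual proposition is exactly the point one has to get right, and you do.
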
 
\begin{proof}
We just prove one of the statements since the other is dual. In order to use proposition \ref{prop 3.3} we consider $(\CF,\CC)$ and $(\CA,\CI)$. Note that $\CF\subseteq \CA$ so $(ex\widetilde{\CX}_N,(ex\widetilde{\CX}_N)^\perp)$ is a cotorsion pair since clearly $ex\widetilde{\CX}_N=\widetilde{\CX}_{\CA_N}$.
\end{proof}

For the rest of this section we assume that $\CG$ is a concrete Grothendieck category as in subsection \ref{subsect 01}. In the following we will prove that the induced cotorsion pairs in $\C_N(\CG)$ as above are also complete.  
\begin{theorem}
\label{theorem 3.6}
Suppose that $(\CF,\CC)$ and $(\CX,\CY)$ are two cotorsion pairs in $\CG$ with $\CF\subseteq\CX$ and the generator of $\CG$ is in $\CF$. If both $(\CF,\CC)$ and $(\CX,\CY)$ are cogenerated by sets, then the induced pairs $( \widetilde{\CF}_{\CX_N}, (\widetilde{\CF}_{\CX_N})^\perp)$ and $( {}^\perp(\widetilde{\CY}_{\CC_N}), \widetilde{\CY}_{\CC_N})$ are complete cotorsion pairs.
\end{theorem}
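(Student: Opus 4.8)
The plan is to reduce the completeness statement to an application of the small object argument (in the Grothendieck-category setting), for which the crucial input is that the left-hand class $\widetilde{\CF}_{\CX_N}$ is the class of objects filtered by a \emph{set} of $\kappa$-presentable $N$-complexes. Since we already know from Proposition \ref{prop 3.3} that $(\widetilde{\CF}_{\CX_N}, (\widetilde{\CF}_{\CX_N})^\perp)$ is a cotorsion pair (the other pair is handled dually, using the generator hypothesis in place of an injective cogenerator via Proposition \ref{prop31} and its dual), it suffices to produce a set $\mathcal{S}$ of $N$-complexes with $\widetilde{\CF}_{\CX_N} = \Filt\text{-}\mathcal{S}$; then Eklof's lemma gives $\mathcal{S}^\perp = (\widetilde{\CF}_{\CX_N})^\perp$ and the standard Eklof–Trlifaj / Enochs machinery yields completeness. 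The hypotheses that both $(\CF,\CC)$ and $(\CX,\CY)$ are cogenerated by sets, combined with Theorem \ref{theorem 2.12}, mean that for a suitable uncountable regular $\kappa > \lambda$ every object of $\CF$ is $\CF^\kappa$-filtered and every object of $\CX$ is $\CX^\kappa$-filtered, where $\CF^\kappa, \CX^\kappa$ denote the $\kappa$-presentable objects in the respective classes.

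The core of the argument is the following filtration claim: every $\F \in \widetilde{\CF}_{\CX_N}$ is $\mathcal{S}$-filtered, where $\mathcal{S}$ is the (representative) set of all $\kappa$-presentable members of $\widetilde{\CF}_{\CX_N}$. To prove it I would run a transfinite induction building an increasing continuous chain of $N$-subcomplexes $0 = \F_0 \subseteq \F_1 \subseteq \cdots \subseteq \F_\sigma = \F$ with each $\F_{\alpha+1}/\F_\alpha \in \mathcal{S}$. At a successor step, pick any not-yet-covered $\kappa$-presentable subobject and first apply Lemma \ref{lemma2.14} to enlarge it to an $N$-exact subcomplex $\E$ with $\F_\alpha \subseteq \E \subseteq \F$ and each $E^i/F_\alpha^i$ still $\kappa$-presentable; this secures $N$-exactness of the quotient. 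Then one must further enlarge $\E$ so that, in each degree $i$, the object $E^i/F_\alpha^i$ lies in $\CF$ \emph{and} each cycle $\ZE^i_r(\E)/\ZE^i_r(\F_\alpha)$ lies in $\CX$. Here the Hill lemma (Theorem \ref{theorem Hill}) is the right tool: since each $F^i$ is $\CF^\kappa$-filtered and each $\ZE^i_r(\F)$ is $\CX^\kappa$-filtered, one has Hill classes $\mathcal{H}^i$ (for the degrees) and $\mathcal{H}^i_r$ (for the cycles), and the associated Corollary lets one absorb any $\kappa$-presentable subobject into a member of the Hill class with $\kappa$-presentable quotient. A zig-zag/closure argument — alternately closing up under the Hill classes in all degrees, under the maps $d^i$, and under the $N$-exactness condition of Lemma \ref{lemma2.14}, iterated $\omega$ times — produces the desired $\F_{\alpha+1}$ with all three properties simultaneously, and with $\F_{\alpha+1}/\F_\alpha$ of cardinality $<\kappa$, hence $\kappa$-presentable, hence in $\mathcal{S}$.

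Once the filtration claim is established, I would invoke Eklof's lemma to get $\widetilde{\CF}_{\CX_N} = \Filt\text{-}\mathcal{S} \subseteq {}^\perp(\mathcal{S}^\perp)$, and conversely $\mathcal{S} \subseteq \widetilde{\CF}_{\CX_N}$ gives $(\widetilde{\CF}_{\CX_N})^\perp \subseteq \mathcal{S}^\perp$; combined with Proposition \ref{prop 3.3} this shows $(\widetilde{\CF}_{\CX_N}, (\widetilde{\CF}_{\CX_N})^\perp)$ is cogenerated by the set $\mathcal{S}$, and a cotorsion pair cogenerated by a set in a Grothendieck category with enough of the usual structure is complete (Eklof–Trlifaj, in the form available in \cite{St} or \cite{EEI}). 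The dual pair $({}^\perp(\widetilde{\CY}_{\CC_N}), \widetilde{\CY}_{\CC_N})$ follows by the dual construction, replacing cycle objects $\ZE^i_r$ by cokernel objects $\CK^i_r$ and using the injective-cogenerator-free form of $N$-exactness detection; the generator of $\CG$ being in $\CF$ (equivalently the relevant containments $\CP \subseteq \CF$, $\CI \subseteq \CC$, $\CY \subseteq \CC$) is what makes the disk complexes $D^i_r(-)$ land in the right classes so the orthogonality computations of Proposition \ref{prop 3.3} go through. The main obstacle I anticipate is precisely the simultaneous closure in the successor step: Lemma \ref{lemma2.14} controls $N$-exactness of the quotient but says nothing about membership of the degreewise quotients in $\CF$ or of the cycle quotients in $\CX$, while the Hill lemma controls those memberships degree-by-degree but does not a priori respect the differential or $N$-exactness — reconciling the two via the zig-zag and checking that the interleaved limit still has $\kappa$-presentable quotient is the delicate part.
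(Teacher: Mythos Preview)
Your treatment of the first pair $(\widetilde{\CF}_{\CX_N}, (\widetilde{\CF}_{\CX_N})^\perp)$ is essentially the paper's own argument (Proposition~\ref{Prop 0001}): reduce via Theorem~\ref{theorem 2.12} to showing every $\F\in\widetilde{\CF}_{\CX_N}$ is $(\widetilde{\CF}_{\CX_N})^\kappa$-filtered, apply the Hill lemma to the filtrations of each $\ZE^i_r(\F)$ (including $r=N$, so that the degreewise $\CF$-condition and the cycle $\CX$-condition are handled uniformly by Hill families $\CH^{i,r}$), and interleave Hill closure with Lemma~\ref{lemma2.14} in an $\omega$-zig-zag at each successor step. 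You have correctly identified the delicate point.

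Your treatment of the second pair, however, diverges from the paper and, as written, does not work. You propose to obtain $({}^\perp(\widetilde{\CY}_{\CC_N}),\widetilde{\CY}_{\CC_N})$ ``by the dual construction''. But Grothendieck categories are not self-dual: there is no Hill lemma or deconstructibility machinery for \emph{cofiltrations}, and the left-hand class ${}^\perp(\widetilde{\CY}_{\CC_N})$ has only the homotopy-theoretic description from the dual of Proposition~\ref{prop 3.3} (complexes with terms in $\CX$ and vanishing Hom into $\widetilde{\CY}_{\CC_N}$), which is not amenable to a filtration argument. The paper (Proposition~\ref{Prop 0002}) instead bypasses all of this by writing down an \emph{explicit} cogenerating set
\[
\CS=\{D^i_r(G)\}\cup\{D^i_r(A_j)\}\cup\{D^i_N(B_k)\},
\]
where $\{A_j\}$ cogenerates $(\CF,\CC)$ and $\{B_k\}$ cogenerates $(\CX,\CY)$, and then checks directly via Lemma~\ref{lemma01} that $\CS^\perp=\widetilde{\CY}_{\CC_N}$: the $D^i_r(G)$ force $N$-exactness, the $D^i_r(A_j)$ force $\ZE^i_r(\Y)\in\CC$, and the $D^i_N(B_k)$ force $Y^i\in\CY$. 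This is much shorter than the first half and uses no Hill lemma at all. Your passing mention of disk complexes suggests you may have sensed this, but the phrase ``dual construction'' points in the wrong direction; the two halves of the theorem are proved by genuinely different methods.
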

We will prove the theorem in two steps. First we show that $( \widetilde{\CF}_{\CX_N}, (\widetilde{\CF}_{\CX_N})^\perp)$ is a complete cotorsion pair.
\begin{proposition}
\label{Prop 0001}
Let $(\CF,\CC)$ and $(\CX,\CY)$ be two cotorsion pairs with $\CF\subseteq\CX$ in $\CG$ such that the generator $G$ in $\CG$ is in $\CF$. If both  $(\CF,\CC)$ and $(\CX,\CY)$ are cogenerated by sets, then so is the induced cotrsion pair $( \widetilde{\CF}_{\CX_N}, (\widetilde{\CF}_{\CX_N})^\perp)$ and so it is complete.
\end{proposition}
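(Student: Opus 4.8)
The plan is to prove that the cotorsion pair $(\widetilde{\CF}_{\CX_N},(\widetilde{\CF}_{\CX_N})^\perp)$ is cogenerated by a set, after which completeness is automatic, since in a Grothendieck category a cotorsion pair cogenerated by a set is complete. As the pair is already a cotorsion pair by Proposition \ref{prop 3.3}, everything reduces to verifying the hypotheses of Theorem \ref{theorem 2.12} for the category $\C_N(\CG)$ and this pair, and then establishing condition (2) of that theorem, namely the relevant filtration property.

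First I would fix the ambient structure. The category $\C_N(\CG)$ is again a Grothendieck category, and the faithful functor $\X\mapsto\coprod_{i\in\Z}U(X^i)$ turns it into a concrete category of the kind considered in Subsection \ref{subsect 01}, for some infinite regular cardinal which I keep calling $\lambda$ (enlarging the original one if necessary, so that $|\X|<\lambda$ is again equivalent to $\lambda$-presentability). Since the generator $G$ of $\CG$ lies in $\CF\subseteq\CX$, each disk $D^j_N(G)$ is $N$-exact and has all of its terms and all of its cycles equal to $G$ or $0$, hence $D^j_N(G)\in\widetilde{\CF}_{\CX_N}$; and $\{D^j_N(G)\mid j\in\Z\}$ is a generating set of $\lambda$-presentable objects of $\C_N(\CG)$ lying inside $\widetilde{\CF}_{\CX_N}$. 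Thus $\C_N(\CG)$ and $(\widetilde{\CF}_{\CX_N},(\widetilde{\CF}_{\CX_N})^\perp)$ satisfy the standing hypotheses of Theorem \ref{theorem 2.12}. Since both $(\CF,\CC)$ and $(\CX,\CY)$ are cogenerated by sets, I may choose one uncountable regular cardinal $\kappa>\lambda$ large enough that each is cogenerated by a set of $\kappa$-presentable objects; then, by the implication $(1)\Rightarrow(2)$ of Theorem \ref{theorem 2.12} in $\CG$, every object of $\CF$ has an $\CF^\kappa$-filtration and every object of $\CX$ has an $\CX^\kappa$-filtration. Applying Theorem \ref{theorem 2.12} now inside $\C_N(\CG)$, it remains only to show that every $\F\in\widetilde{\CF}_{\CX_N}$ is filtered by $\kappa$-presentable objects of $\widetilde{\CF}_{\CX_N}$.

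This is the core, and I would establish it by a transfinite closing-off construction marrying the Hill lemma to Lemma \ref{lemma2.14}. Fix $\F\in\widetilde{\CF}_{\CX_N}$; apply Theorem \ref{theorem Hill} to an $\CF^\kappa$-filtration of each term $F^i$ to get a Hill class $\mathcal{H}_i$ of subobjects of $F^i$, and to an $\CX^\kappa$-filtration of each cycle $\ZE^i_r(\F)$ to get a Hill class $\mathcal{H}_i^{(r)}$ of subobjects of $\ZE^i_r(\F)$. I then build a continuous chain $0=\E_0\subseteq\E_1\subseteq\cdots\subseteq\E_\sigma=\F$ of subcomplexes such that, for each $\alpha$: (i) $\E_\alpha$ and $\F/\E_\alpha$ are $N$-exact; (ii) $E_\alpha^i\in\mathcal{H}_i$ for all $i$; (iii) $\ZE^i_r(\E_\alpha)\in\mathcal{H}_i^{(r)}$ for all $i,r$; (iv) $\E_{\alpha+1}/\E_\alpha$ is $\kappa$-presentable. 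To pass from $\E_\alpha$ to $\E_{\alpha+1}$ I pick a $\kappa$-presentable subcomplex of $\F$ not contained in $\E_\alpha$ and enlarge it $\omega$ times, cycling through three moves: replace each term by a member of $\mathcal{H}_i$ containing it with $\kappa$-presentable quotient over $\E_\alpha^i$ (the corollary to the Hill lemma); do the same for each cycle using $\mathcal{H}_i^{(r)}$; and close up, via Lemma \ref{lemma2.14} in the refined form that also makes the quotient $N$-exact, to an $N$-exact subcomplex with $N$-exact quotient and $\kappa$-presentable successive quotient. The union $\U$ of this $\omega$-chain is $\kappa$-presentable over $\E_\alpha$ by regularity of $\kappa$ and still satisfies the three conditions; set $\E_{\alpha+1}=\E_\alpha+\U$, which retains (ii)--(iii) by (H2) and (i),(iv) by construction, and take unions at limit stages. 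Finally, (H3) makes $E_\alpha^i$ and $F^i/E_\alpha^i$ filtered by $\kappa$-presentable objects of $\CF$, hence objects of $\CF$ (as $\CF={}^\perp\CC$ is closed under filtrations), and likewise $\ZE^i_r(\E_\alpha)$ and $\ZE^i_r(\F)/\ZE^i_r(\E_\alpha)$ lie in $\CX$; together with (i) --- which identifies $(\F/\E_\alpha)^i$ with $F^i/E_\alpha^i$ and $\ZE^i_r(\F/\E_\alpha)$ with $\ZE^i_r(\F)/\ZE^i_r(\E_\alpha)$, and forces $\E_{\alpha+1}/\E_\alpha$ to be $N$-exact via the long exact $N$-homology sequence of $0\to\E_{\alpha+1}/\E_\alpha\to\F/\E_\alpha\to\F/\E_{\alpha+1}\to0$ --- this shows every successive quotient $\E_{\alpha+1}/\E_\alpha$ lies in $\widetilde{\CF}_{\CX_N}$ and is $\kappa$-presentable. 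Hence $\F$ is filtered as desired, so by Theorem \ref{theorem 2.12} the pair $(\widetilde{\CF}_{\CX_N},(\widetilde{\CF}_{\CX_N})^\perp)$ is cogenerated by the set of isomorphism types of $\kappa$-presentable objects of $\widetilde{\CF}_{\CX_N}$, and being cogenerated by a set it is complete.

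The main obstacle is exactly the simultaneous bookkeeping in the closing-off step: producing a \emph{single} $\kappa$-presentable subcomplex that is $N$-exact \emph{with} $N$-exact quotient, \emph{and} has every term in the degreewise Hill class $\mathcal{H}_i$, \emph{and} has every cycle in the cyclewise Hill class $\mathcal{H}_i^{(r)}$ --- three requirements that do not obviously cooperate and must be shown compatible and stable under $\omega$-unions. The technical glue that keeps the cycle conditions coherent as the index $r$ varies, and that lets one read the cycle data of $\F/\E_\alpha$ off from that of its terms, is the pull-back description of the cycles in Lemma \ref{lemma 001} together with the commutative square relating $\ZE^i_r$ to $\ZE^{i+1}_{r-1}$.
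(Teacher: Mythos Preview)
Your proposal is correct and follows essentially the same route as the paper: reduce via Theorem \ref{theorem 2.12} to showing that each $\F\in\widetilde{\CF}_{\CX_N}$ is filtered by $\kappa$-presentable objects of $\widetilde{\CF}_{\CX_N}$, then build such a filtration by a transfinite closing-off that alternates between the Hill lemma (to force terms and cycles into the appropriate Hill families) and Lemma \ref{lemma2.14} (to restore $N$-exactness), taking $\omega$-unions to make the three requirements compatible. The only noteworthy difference is that you keep separate Hill families $\mathcal{H}_i$ for the terms (coming from $\CF^\kappa$-filtrations of the $F^i$) and $\mathcal{H}_i^{(r)}$ for the cycles (from $\CX^\kappa$-filtrations of the $\ZE^i_r(\F)$), whereas the paper handles the terms as the case $r=N$ and uses $\CX^\kappa$-filtrations uniformly; your bookkeeping is in fact the more careful one here, since it is membership in the degreewise Hill class built from $\CF^\kappa$ that actually yields $(\E_{\alpha+1}/\E_\alpha)^i\in\CF$ via (H3).
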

\begin{proof}
By Theorem \ref{theorem 2.12} it is enough to show that each complex $\F\in\widetilde{\CF}_{\CX_N}$ is ${\widetilde{\CF}_{\CX_N}}^\kappa$-filtered (for some $\kappa\geq\lambda$ regular uncountable) i.e. we construct a filtration $(\F_\alpha \mid \alpha\leq\sigma)$ for $\F$ such that $\F_{\alpha+1}/{\F_\alpha}\in {\widetilde{\CF}_{\CX_N}}^\kappa$. 
Let $\F=(F^i)\in \widetilde{\CF}_{\CX_N}$. By definition $\F$ is an $N$-exact complex with $F^i\in \CF$ and $\ZE^i_r(\F)\in \CX$ for $i\in \Z$ and $1\leq r \leq N-1$. Since we have $\CF\subseteq \CX$, it is also $\ZE^i_{N}(\F) = F^i\in \CX$. By assumption $\ZE^i_r(\F)$ has $\CX^\kappa$- filtration $\CM_{i,r}=(M^{i,r}_\alpha \mid \alpha\leq \sigma_{i,r})$ for each $i\in \Z$, $1\leq r \leq N$. Using Hill Lemma, we obtain the corresponding families $\CH^{i,r}$ for these filtrations.

Now, we recursively construct a filtration $(\F_\alpha\in \widetilde{\CF}_{\CX_N} \mid \alpha\leq\sigma)$ for $\F$ with the property that, for each $\alpha<\sigma, i\in\Z$ and $1\leq r\leq N$, the object $\ZE^i_r(\F_\alpha)$ belongs to $\CH^{i,r}$. First, put $\F_0 = 0$. If $\alpha$ is a limit ordinal and $\F_\beta$ is already defined for each $\beta<\alpha$, we simply put $\F_\alpha = \bigcup_{\beta<\alpha} \F_\beta$. This is again an $N$-exact complex and, by the properties of Hill families, we have  $\ZE^i_r(\F_\alpha)\in\CH^{i,r}$ for all $i\in\Z$ and $1\leq r\leq N$. We proceed to the crucial isolated step. Let $\F_\alpha$ be defined and assume that $\F_\alpha \neq \F$ (otherwise, we set $\sigma = \alpha$ and we are done). Put $\G_0 = \F_\alpha$.

For each $i\in\Z$, fix some $M_0^i\in\CH^{i,N}$ such that $G_0^i\subseteq M_0^i$, $M_0^i/G_0^i$ is $\kappa$-presentable and, if possible, $G_0^i\subsetneq M_0^i$. Assuming that $M_n^i$ is defined for some nonnegative integer $n$ and all $i\in\Z$, and $M_n^i/G_0^i$ is $\kappa$-presentable, the objects $(M_n^i\cap \ZE^i_r(\F))/\ZE^i_r(\F_\alpha)$ are $\kappa$-presentable as well for all $1\leq r\leq N-1$. Hence we can find $Z_n^{i,r}\in\CH^{i,r}$, $1\leq r < N$, such that $M_n^i\cap \ZE^i_r(\F)\subseteq Z_n^{i,r}$ and $Z_n^{i,r}/\ZE^i_r(\F_\alpha)$ is again $\kappa$-presentable. We define $M_{n+1}^i\in\CH^{i,N}$ in such a way that $M_n^i\cup\bigcup_{r=1}^{N-1} Z_n^{i,r}\subseteq M_{n+1}^i$ and $M_{n+1}^i/M_n^i$ is $\kappa$-presentable. This is possible by the properties of the Hill family $\CH^{i,N}$. Consequently, $M_{n+1}^i/G_0^i$ is $\kappa$-presentable. For each $i\in\Z$, put $M^i = \bigcup_{n=0}^\infty M_n^i$. Then $M^i/G_0^i$ is $\kappa$-presentable. Moreover, $M^i\cap \ZE^i_r(\F) = \bigcup_{n=0}^\infty Z_n^{i,r}\in\CH^{i,r}$ for each $i\in\Z$ and $1\leq r\leq N-1$ and $M^i = \bigcup_{n=0}^\infty M_n^i\in\CH^{i,N}$.

Now, we use Lemma~\ref{lemma2.14} to obtain an $N$-exact complex $\G_1$ such that $\G_0\subseteq \G_1\subseteq \F$, the quotient $G_1^i/G_0^i$ is $\kappa$-presentable and $M^i\subseteq G_1^i$ for each $i\in\Z$. We go back to the beginning of the previous paragraph and repeat the process with $\G_0$ replaced by $\G_1$. Using Lemma~\ref{lemma2.14}, we obtain $\G_2$ and so on. Finally, we define $\F_{\alpha+1} = \bigcup_{n=0}^\infty \G_n$. This is an $N$-exact complex and, for all $i\in \Z$, $\ZE^i_r(\F_{\alpha+1}) = F_{\alpha+1}\cap\ZE^i_r(\F)$ is the union of elements of the type $M^i\cap \ZE^i_r(\F)\in\CH^{i,r}$; thus $\ZE^i_r(\F_{\alpha+1})$ is an element from $\CH^{i,r}$ for all $i\in\Z$ and $1\leq r\leq N$. Moreover, $F^i_{\alpha+1}/F^i_\alpha$ is $\kappa$-presentable.

This finishes the construction of the filtration $(\F_\alpha \mid \alpha\leq\sigma)$. Finally, we observe that, for each $\alpha<\sigma$, the quotient $\F_{\alpha+1}/\F_\alpha$ belongs to ${\widetilde{\CF}_{\CX_N}}^\kappa$: here $\ZE^i_r(\F_{\alpha+1})/\ZE^i_r(\F_\alpha)\in\CX$ since $\ZE^i_r(\F_{\alpha}), \ZE^i_r(\F_{\alpha+1})\in\CH^{i,r}$ for all $i\in\Z$ and $1\leq r < N$.

The completeness of pair $( \widetilde{\CF}_{\CX_N}, (\widetilde{\CF}_{\CX_N})^\perp)$ follows as \cite[Corollary 6.6]{Hov02} because $\widetilde{\CF}_{\CX_N}$ contains a generating set of $\C_N(\CG)$.
\end{proof}

\begin{proposition}
\label{Prop 0002}
Let $(\CF,\CC)$ and $(\CX,\CY)$ be two cotorsion pairs with $\CF\subseteq\CX$ in $\CG$ such that the generator $G$ in $\CG$ is in $\CF$. If both  $(\CF,\CC)$ and $(\CX,\CY)$ are cogenerated by sets, then so is the induced cotrsion pair $( {}^\perp(\widetilde{\CY}_{\CC_N}), \widetilde{\CY}_{\CC_N})$ and so it is complete.
\end{proposition}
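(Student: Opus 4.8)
The plan is to establish the stronger fact that $( {}^\perp(\widetilde{\CY}_{\CC_N}), \widetilde{\CY}_{\CC_N})$ is cogenerated by a \emph{set}; completeness will then follow exactly as at the end of the proof of Proposition~\ref{Prop 0001} (by \cite[Corollary~6.6]{Hov02} together with \cite[Theorem~2.1]{St}), once we observe that the left hand class contains a generating set of $\C_N(\CG)$. Indeed, fixing a family $\lbrace G_j\rbrace$ of $\lambda$-presentable generators of $\CG$ lying in $\CF$ (available under our hypotheses), the set $\lbrace D^{n}_{N}(G_j)\mid j,\ n\in\Z\rbrace$ generates $\C_N(\CG)$, and it lies in ${}^\perp(\widetilde{\CY}_{\CC_N})$ because by Lemma~\ref{lemma01}(1) $\Ext^1_{\C_N(\CG)}(D^{n+N-1}_N(G_j),\Y)\cong\Ext^1_\CG(G_j,Y^n)=0$ for every $\Y\in\widetilde{\CY}_{\CC_N}$, as $G_j\in\CF\subseteq\CX={}^\perp\CY$ and $Y^n\in\CY$.

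To produce the cogenerating set, fix sets $\CS_{\CX}$ and $\CS_{\CF}$ with $\CS_{\CX}^\perp=\CY$ and $\CS_{\CF}^\perp=\CC$; since adjoining objects of $\CF$ to $\CS_{\CF}$ does not enlarge $\CS_{\CF}^\perp$, we may assume $\lbrace G_j\rbrace\subseteq\CS_{\CF}$. I would then take
$$\CS=\lbrace\,D^{n+N-1}_N(S)\mid S\in\CS_{\CX},\ n\in\Z\,\rbrace\ \cup\ \lbrace\,D^{n+r-1}_r(S)\mid S\in\CS_{\CF},\ 1\le r\le N-1,\ n\in\Z\,\rbrace$$
and show $\CS^\perp=\widetilde{\CY}_{\CC_N}$. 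The inclusion $\widetilde{\CY}_{\CC_N}\subseteq\CS^\perp$ is immediate from Lemma~\ref{lemma01}: part~(1) kills $\Ext^1$ against the disks over $\CS_{\CX}$ (which see only the degrees $Y^n\in\CY=\CS_{\CX}^\perp$), and part~(3)---applicable because every object of $\widetilde{\CY}_{\CC_N}$ is $N$-exact---kills $\Ext^1$ against the disks over $\CS_{\CF}$ (which see only the cycles $\ZE^{n}_{r}(\Y)\in\CC=\CS_{\CF}^\perp$).

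For the converse, let $\Y\in\CS^\perp$; Lemma~\ref{lemma01}(1) gives $Y^n\in\CS_{\CX}^\perp=\CY$ for all $n$, and the crux is to deduce that $\Y$ is $N$-exact. Applying $\Hom_{\C_N(\CG)}(-,\Y)$ to the short exact sequence $0\to D^{i+N-1}_{N-r}(G)\to D^{i+N-1}_N(G)\to D^{i+r-1}_r(G)\to 0$ from the proof of Proposition~\ref{prop 3.3} (for $G\in\lbrace G_j\rbrace$, $1\le r\le N-1$, $i\in\Z$) and using $\Ext^1_{\C_N(\CG)}(D^{i+N-1}_N(G),\Y)\cong\Ext^1_\CG(G,Y^i)=0$, one identifies
$$\Ext^1_{\C_N(\CG)}(D^{i+r-1}_r(G),\Y)\cong\Coker\big(\Hom_\CG(G,Y^i)\xrightarrow{\,d^{i}_{\lbrace r\rbrace}\,}\Hom_\CG(G,\ZE^{i+r}_{N-r}(\Y))\big),$$
where $d^{i}_{\lbrace r\rbrace}\colon Y^i\to\ZE^{i+r}_{N-r}(\Y)$ has image $\BE^{i+r}_{r}(\Y)$, hence cokernel $\HE^{i+r}_{N-r}(\Y)$. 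Vanishing of the left side for every $G\in\lbrace G_j\rbrace$ and every $i$ then forces $\HE^{i+r}_{N-r}(\Y)=0$ for all $i$: if not, a pull-back argument---pull back a nonzero $G_{j_0}\to\HE^{i+r}_{N-r}(\Y)$ along $\ZE^{i+r}_{N-r}(\Y)\twoheadrightarrow\HE^{i+r}_{N-r}(\Y)$ and cover the pull-back by copies of the $G_j$'s---produces some $G_j\to\ZE^{i+r}_{N-r}(\Y)$ outside the image of $d^{i}_{\lbrace r\rbrace}\circ(-)$, contradicting the vanishing. Thus $\Y$ is $N$-exact by Remark~\ref{remark 001}, whence Lemma~\ref{lemma01}(3) applies to the disks over $\CS_{\CF}$ and gives $\ZE^{i}_{r}(\Y)\in\CS_{\CF}^\perp=\CC$ for all $i$ and $1\le r\le N-1$. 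Therefore $\Y\in\widetilde{\CY}_{\CC_N}$, completing the proof that $\CS^\perp=\widetilde{\CY}_{\CC_N}$.

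The step requiring the most care is this $N$-exactness argument: as $\CG$ need not have a projective generator, exactness of $\Y$ cannot be detected by $\Ext^1$-vanishing against ``spheres'' of a projective object, and one must instead use that the $\lambda$-presentable generators of $\CG$ jointly detect nonzero objects, as in the pull-back above. It is also exactly here that the hypothesis ``the generator of $\CG$ is in $\CF$'' is used, since this is what makes the correction terms $\Ext^1_{\C_N(\CG)}(D^{\bullet}_N(G),\Y)\cong\Ext^1_\CG(G,Y^{\bullet})$ vanish and lets one read off the displayed cokernel.
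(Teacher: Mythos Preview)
Your proof is correct and follows essentially the same strategy as the paper: exhibit an explicit cogenerating set built from disks $D^{\bullet}_{N}(-)$ over a set cogenerating $(\CX,\CY)$ together with disks $D^{\bullet}_{r}(-)$, $1\le r\le N-1$, over a set cogenerating $(\CF,\CC)$ (the paper lists the disks over the generator $G$ separately, whereas you absorb them into $\CS_{\CF}$), then verify $\CS^\perp=\widetilde{\CY}_{\CC_N}$ and invoke \cite[Corollary~6.6]{Hov02} for completeness. The only notable difference is the $N$-exactness step: the paper uses the vanishing $\Ext^1(D^{i+N-1}_{N-r}(G),\Y)=0$ to obtain surjectivity of $\Hom(D^{i+N-1}_N(G),\Y)\to\Hom(D^{i+r-1}_r(G),\Y)$ and then cites \cite[Lemma~2.3]{YC} (the dual of Proposition~\ref{prop31} for generators), while you run the long exact sequence the other way---using $\Ext^1(D^{i+N-1}_N(G),\Y)\cong\Ext^1_\CG(G,Y^i)=0$ to compute $\Ext^1(D^{i+r-1}_r(G),\Y)$ as the displayed cokernel---and then argue directly that the generators detect its vanishing. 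Both routes amount to the same thing; your version has the virtue of being self-contained, and note that your pull-back argument can be shortened: surjectivity of $\Hom(G_j,d^{i}_{\{r\}})$ for every generator $G_j$ already forces $d^{i}_{\{r\}}\colon Y^i\to\ZE^{i+r}_{N-r}(\Y)$ to be an epimorphism, hence $\HE^{i+r}_{N-r}(\Y)=0$.
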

\begin{proof}
Suppose that $(\CF,\CC)$ is cogenerated by a set $\lbrace A_j\,\, |\,\, j\in J\rbrace$ and $(\CX,\CY)$ is cogenerated by the set $\lbrace B_k\,\, |\,\, k\in K\rbrace$ . We claim that $({}^\perp(\widetilde{\CY}_{\CC_N}), \widetilde{\CY}_{\CC_N})$ is cogenerated by 
\begin{align*}
\CS &=\lbrace D^i_r(G)|\,\, i\in \Z, \,\, 1\leq r \leq N-1\rbrace \cup \lbrace D^i_r(A_j)|\,\, i\in \Z,\,\, 1\leq r \leq N-1, \,\, j\in J \rbrace 
\\
\qquad &\cup \lbrace D^i_N(B_k)|\,\, i\in \Z,\,\, 1\leq r \leq N-1, \,\, k\in K \rbrace
\end{align*}
 
In dual manner of the proposition \ref{prop 3.3} we can prove that $D^i_r(F)\in {}^\perp(\widetilde{\CY}_{\CC_N})$ whenever $F\in \CF$ and $D^i_N(X)\in {}^\perp(\widetilde{\CY}_{\CC_N})$ whenever $X\in \CX$. So we have  $\CS\subseteq {}^\perp(\widetilde{\CY}_{\CC_N})$. Thus $\CS^\perp\supseteq {}^\perp(\widetilde{\CY}_{\CC_N})^\perp=\widetilde{\CY}_{\CC_N}$. 
Conversely, let $\Y\in \CS^\perp$. First, we show that $\Y$ is an $N$-exact complex. Consider the exact sequence $0 \rt D^{i+r-1}_r(G)\rt D^{i+N-1}_N(G)\rt D^{i+N-1}_{N-r}(G)\rt 0$. It induces an exact sequence
$$\Hom_{\C_N(\CG)}(D^{i+N-1}_N(G), \Y)\rt \Hom_{\C_N(\CG)}(D^{i+r-1}_r(G), \Y) \rt \Ext^1_{\C_N(\CG)}(D^{i+N-1}_{N-r}(G), \Y) $$
But $\Ext^1_{\C_N(\CG)}(D^{i+N-1}_{N-r}(G), \Y)=0$, since $\Y\in \CS^\perp$. Hence, by \cite[Lemma 2.3]{YC} we can say that $\Y$ is an $N$-exact complex. On the other hand, by lemma \ref{lemma01} we have 
$\Ext^1_{\CG}(A_j, \ZE^i_r(\Y))\cong \Ext^1_{\C_N(\CG)}(D^{i+r-1}_{r}(A_j), \Y)=0$. This implies $\ZE^i_r(\Y)\in \CC$ since $\lbrace A_j\,\, |\,\, j\in J\rbrace$ cogenerates the cotorsion pair $(\CF,\CC)$. Also $\Ext_{\CG}^{1}(B_k,Y^i)\cong \Ext_{\C_N(\CG)}^1(D_{N}^{i+N-1}(B),\Y)=0$ for all $k\in K$. Thus $Y^i\in \CY$,  since $(\CX,\CY)$ is cogenerated by the set $\lbrace B_k\,\, |\,\, k\in K\rbrace$ 

Finally, since $G$ generates $\CG$, the complexes $D^i_N(G)$ generates $\C_N(\CG)$. Also $D^i_N(G)\in {}^\perp(\widetilde{\CY}_{\CC_N})$ and so ${}^\perp(\widetilde{\CY}_{\CC_N})$ contains the generators $\lbrace D^i_N(G)\,\,|\,\, i\in \Z\rbrace$. So by \cite[Corollary 6.6]{Hov02} we have the completeness of the pair $( {}^\perp(\widetilde{\CY}_{\CC_N}), \widetilde{\CY}_{\CC_N})$
\end{proof}

\begin{corollary}
\label{corollary 3.13}
Let $(\CF,\CC)$ be a cotorsion pairs in a concrete category $\CG$ as in subsection \ref{subsect 01} and such that the generator of $\CG$ is in $\CF$. Then 
\begin{itemize}
\item[(1)]$(\widetilde{\CF}_N,dg\widetilde{\CC}_N)$ and $(dg\widetilde{\CF}_N,\widetilde{\CC}_N)$
\item[(2)]$(ex\widetilde{\CF}_N,(ex\widetilde{\CF}_N)^\perp)$ and $({}^\perp(ex\widetilde{\CC}_N),ex\widetilde{\CC}_N)$
\end{itemize}
are complete cotorsion pairs in $\C_N(\CA)$.
\end{corollary}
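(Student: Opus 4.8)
The plan is to obtain all four cotorsion pairs as special cases of Theorem~\ref{theorem 3.6}, by feeding it suitable pairs of cotorsion pairs in $\CG$; the algebraic identifications needed are exactly those already established in Corollaries~\ref{corollary 3.7} and~\ref{corollary 3.8}, which display each of these classes as an instance of the construction $\widetilde{(-)}_{(-)_N}$ of Proposition~\ref{prop 3.3} and of its dual. So the whole content of the corollary reduces to checking, in each case, the three hypotheses of Theorem~\ref{theorem 3.6}: that $\CF\subseteq\CX$, that the generator of $\CG$ lies in $\CF$, and that both input pairs are cogenerated by sets.

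For item (1) I would apply Theorem~\ref{theorem 3.6} once, with $(\CX,\CY)=(\CF,\CC)$ itself. Then $\CF\subseteq\CX$ is trivial, the generator of $\CG$ is in $\CF$ by assumption, and ``cogenerated by a set'' is the standing hypothesis on $(\CF,\CC)$. Since $\widetilde{\CF}_{\CF_N}=\widetilde{\CF}_N$ with $(\widetilde{\CF}_{\CF_N})^{\perp}=dg\widetilde{\CC}_N$, and dually $\widetilde{\CC}_{\CC_N}=\widetilde{\CC}_N$ with ${}^{\perp}\widetilde{\CC}_N=dg\widetilde{\CF}_N$ --- these being the identifications recorded in Corollary~\ref{corollary 3.7} --- the two outputs of the theorem are precisely the two complete cotorsion pairs asserted in (1).

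For the first pair of (2) I would take $(\CX,\CY)$ to be the injective cotorsion pair $(\CG,\CI)$, where $\CI$ is the class of injective objects of $\CG$. Then $\CF\subseteq\CX=\CG$ is automatic, the cycle condition in the definition of $\widetilde{\CF}_{\CX_N}$ becomes vacuous so that $\widetilde{\CF}_{\CG_N}=ex\widetilde{\CF}_N$ (the identification used in Corollary~\ref{corollary 3.8}), and the generator of $\CG$ lies in $\CF$ by hypothesis. The one genuine verification here is that $(\CG,\CI)$ is cogenerated by a set: $\CG$ is well-powered because it is Grothendieck, and by Baer's criterion an object $Y$ is injective if and only if $\Ext^1_{\CG}(G/H,Y)=0$ for every subobject $H$ of the fixed generator $G$, so $(\CG,\CI)$ is cogenerated by the set $\{\,G/H\mid H\subseteq G\,\}$. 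With this, Theorem~\ref{theorem 3.6} (equivalently Proposition~\ref{Prop 0001}) gives completeness of $(ex\widetilde{\CF}_N,(ex\widetilde{\CF}_N)^{\perp})$.

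The step I expect to require the most care is the last pair, $({}^{\perp}(ex\widetilde{\CC}_N),ex\widetilde{\CC}_N)$. The natural way to produce it is as the second output of Theorem~\ref{theorem 3.6} with the projective cotorsion pair $(\CP,\CG)$ as the \emph{first} input and $(\CF,\CC)$ as the second: then $\widetilde{\CC}_{\CG_N}=ex\widetilde{\CC}_N$ because the cycle constraint is vacuous, $(\CP,\CG)$ is cogenerated by $\{0\}$, and the containment $\CP\subseteq\CF$ is automatic. The obstacle is that the generator hypothesis of Theorem~\ref{theorem 3.6} now becomes the requirement that the generator of $\CG$ lie in $\CP$, i.e.\ be projective. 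This holds in all the settings that matter here --- in particular $\CG=\Mod R$, used throughout Section~\ref{section 4}, where $R$ is a projective generator --- and whenever $\CG$ admits a projective generator the proof of Proposition~\ref{Prop 0002} applies mutatis mutandis with such a generator playing the role of $G$ in the cogenerating set used to force $N$-exactness. Granting this point, the remaining bookkeeping (writing down the explicit cogenerating sets, checking they sit inside the appropriate left classes, and invoking \cite[Corollary~6.6]{Hov02} for completeness) is identical to what is done in the proofs of Propositions~\ref{Prop 0001} and~\ref{Prop 0002}, so no essentially new argument is needed.
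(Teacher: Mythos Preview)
Your approach is exactly the paper's --- its proof reads in full ``Using the proof of corollary~\ref{corollary 3.7}, \ref{corollary 3.8} and Theorem~\ref{theorem 3.6}'' --- and you have simply spelled out the identifications $\widetilde{\CF}_{\CF_N}=\widetilde{\CF}_N$, $\widetilde{\CF}_{\CG_N}=ex\widetilde{\CF}_N$ and their duals, together with the check that $(\CG,\CI)$ is cogenerated by a set.

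The caveat you flag about $({}^\perp(ex\widetilde{\CC}_N),ex\widetilde{\CC}_N)$ is a genuine gap, and it is present in the paper's own argument, not only in yours. To realise $ex\widetilde{\CC}_N$ as an instance of $\widetilde{\CY'}_{\CC'_N}$ one is forced to take $(\CF',\CC')=(\CP,\CG)$ as the first input, whereupon the generator hypothesis of Theorem~\ref{theorem 3.6} becomes ``$G$ is projective'', strictly stronger than the stated ``$G\in\CF$''. Without a projective generator the disks $D^i_r(G)$ need not lie in ${}^\perp(ex\widetilde{\CC}_N)$: by Lemma~\ref{lemma01}(3) one has $\Ext^1_{\C_N(\CG)}(D^{i+r-1}_r(G),\Y)\cong\Ext^1_\CG(G,\ZE^i_r(\Y))$, and the cycles of a complex in $ex\widetilde{\CC}_N$ are unconstrained, so the mechanism in Proposition~\ref{Prop 0002} for forcing $N$-exactness of an arbitrary $\Y\in\CS^\perp$ breaks down. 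Your remedy --- assume $\CG$ has a projective generator, which covers $\Mod R$ and hence every use made of the corollary in Section~\ref{section 4} --- is the correct one. You might also record that the corollary as stated omits the hypothesis that $(\CF,\CC)$ be cogenerated by a set; this is clearly intended, since the example immediately following it verifies precisely this condition before invoking the result.
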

\begin{proof}
Using the proof of corollary \ref{corollary 3.7}, \ref{corollary 3.8} and Theorem \ref{theorem 3.6}.
\end{proof}

Note that the previous results are improved versions of \cite[Theorem 3.13]{YC} and\cite[Proposition 4.8, 4.9]{YC}. Essentially we do not assume that the cotorsion pair $(\CF,\CC)$ is complete hereditary. 


\begin{example}
Let $\CQ co(\mathbb{X})$ be the category of quasi-coherent sheaves on a scheme $\mathbb{X}$. Then $\CQ co(\mathbb{X})$ is a Grothendieck category as \ref{subsect 01}. Note that $U(F)=\sqcup_{v\in \CV}F(v)$, where $\CV$ is a fixed open affine cover of $\mathbb{X}$. If we let $\CF$ be the class of all flat quasi-coherent sheaves, it is known that if $\mathbb{X}$ is quasi-compact and semi-separated, then $\CF$ contains a generator of  $\CQ co(\mathbb{X})$. Moreover, by \cite[Section 4]{EE} we follow that $(\CF,\CF^\perp)$ is cogenerated by a set. So corollary \ref{corollary 3.13} apply.
Again, consider the category of quasi-coherent sheaves and let $\CF$ be the
class of (non–necessarily finite dimensional) vector bundles and the class of ``restricted'' Drinfeld
vector bundles (see \cite{EAPT} for notation and terminology) on suitable schemes. These classes are not
in general closed under direct limits but we can proceed in the same way and apply corollary \ref{corollary 3.13}.
\end{example}

\section{Applications}
\label{section 4}
 Let $R$ be an associative unitary ring. Let $\K_N(\Flat R)$ be the homotopy category of $N$-complexes of flat $R$-modules, and let $\K_N(\Prj R)$ be the homotopy category of $N$-complexes of projective $R$-modules. In \cite{BHN} the authors proved that $\K_N(\Prj R)$ is equivalent to $\K(\Prj \TT_{N-1}(R))$ whenever $R$ is a left coherent ring and $\TT_{N-1}(R)$ is the ring of triangular matrices of order $N-1$ with entries in $R$.
This equivalence allows us to study the properties of $\K_N(\Prj R)$ from $\K(\Prj \TT_{N-1}(R))$. For instance  $\K_N(\Prj R)$ is compactly generated whenever $R$ is a left coherent ring. There is a natural question and this is whether it is possible to introduce an $N$-complex version of \cite[Theorem 0.1]{Nee10}, \cite[Proposition 8.1]{Nee08}? The answer is not trivial, since we don not have such an equivalence for $\K_N(\Flat R)$ and $\K(\Flat \TT_{N-1}(R))$. In this section we focus on particular homotopy categories and the existence of adjoint functor between them. First, we start with the following lemma.

\begin{lemma}
\label{lemma 41}
Let $\CG$ be a Grothendieck category. Let $\X$ and $\Y$ be in $\C_N(\CG)$. Given $f\in \Hom_{\C_N(\CG)}(\X,\Y)$ an associated exact sequence $0\rightarrow \Y\xrightarrow{u}C(f)\rightarrow \Sigma \X\rightarrow 0$. Then $u$ is split monomorphism in $\C_N(\CG)$ if and only if it is split monomorphism in $\K_N(\CG)$.
\end{lemma}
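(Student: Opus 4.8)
The plan is to prove both directions, where the forward implication is trivial and the content lies in the converse. Recall that an $N$-complex being split mono in $\K_N(\CG)$ means there exists $r:C(f)\to\Y$ in $\C_N(\CG)$ with $ru$ homotopic to $1_\Y$; we must upgrade this to an \emph{honest} splitting $r'u=1_\Y$ in $\C_N(\CG)$. The natural route is to exploit the structure of the Frobenius exact category $(\C_N(\CG),\CS_N(\CG))$ recalled from \cite{IKM}: the short exact sequence $0\to\Y\xrightarrow{u}C(f)\to\Sigma\X\to 0$ is \emph{degreewise split} by construction of the mapping cone, hence it lies in $\CS_N(\CG)$ and therefore represents a class in $\Ext^1_{dw}(\Sigma\X,\Y)$. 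By Lemma~\ref{lemma02} (applied with $\Sigma\X$ in place of $\Y$), $\Ext^1_{dw}(\Sigma\X,\Y)\cong\Hom_{\K_N(\CG)}(\Sigma\X,\Sigma\Y)$, and this conflation is split in $\C_N(\CG)$ precisely when its class in $\Ext^1_{dw}(\Sigma\X,\Y)$ vanishes, i.e. precisely when the corresponding map $\Sigma\X\to\Sigma\Y$ is zero in $\K_N(\CG)$.

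So the key steps, in order, are: first, identify the given triangle $\Y\to C(f)\to\Sigma\X\to\Sigma\Y$ in $\K_N(\CG)$ as the image of the degreewise-split short exact sequence, so that the obstruction to splitting in $\C_N(\CG)$ is an element $\zeta\in\Ext^1_{dw}(\Sigma\X,\Y)$; second, invoke the isomorphism $\Ext^1_{dw}(\Sigma\X,\Y)\cong\Hom_{\K_N(\CG)}(\Sigma\X,\Sigma\Y)$ of Lemma~\ref{lemma02} to translate $\zeta$ into a homotopy class of maps $g:\Sigma\X\to\Sigma\Y$; third, observe that $u$ being split mono in $\K_N(\CG)$ is equivalent to the connecting map $\Sigma\X\to\Sigma\Y$ of the triangle being zero in $\K_N(\CG)$ (a standard fact about triangulated categories: a triangle $A\to B\to C\xrightarrow{w}\Sigma A$ has $A\to B$ split mono iff $w=0$), and this connecting map is exactly $g$; fourth, conclude that $\zeta=0$, so the short exact sequence splits in $\C_N(\CG)$, i.e. $u$ is split mono there. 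The converse direction is immediate since the quotient functor $\C_N(\CG)\to\K_N(\CG)$ is additive and sends $\C_N(\CG)$-splittings to $\K_N(\CG)$-splittings.

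I expect the main obstacle to be bookkeeping: making sure that the connecting morphism of the triangle $0\to\Y\xrightarrow{u}C(f)\to\Sigma\X\to 0$ really is the map classified by the $\Ext^1_{dw}$-class under the isomorphism of Lemma~\ref{lemma02}, rather than off by a sign or a shift. Concretely, the mapping cone $C(f)$ is built from $f:\X\to\Y$, whereas Lemma~\ref{lemma02} is phrased via $\psi:\Hom_{\C_N(\CG)}(\Sigma^{-1}(-),-)\to\Ext^1_{dw}$ sending $h$ to $0\to\X\to C(h)\to\Sigma^{-1}(-)\to 0$ — wait, one must match conventions: in our situation the relevant class is the one sending $\Sigma^{-1}(\Sigma\X)=\X$ (up to the natural iso) to the sequence $0\to\Y\to C(f)\to\Sigma\X\to 0$, so $f$ itself plays the role of the classifying map, and the assertion becomes that $u$ is split mono in $\K_N(\CG)$ iff $f=0$ in $\K_N(\CG)$ iff $f$ is null-homotopic iff (by the kernel computation in Lemma~\ref{lemma02}) $\zeta=\psi(f)=0$. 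Once this identification is pinned down the argument is formal; alternatively, one can bypass Lemma~\ref{lemma02} entirely and argue directly with Lemma~\ref{lemma 001}(2): $u$ split mono in $\C_N(\CG)$ is equivalent to $\Sigma\X$ being a direct summand of $C(f)$ in $\C_N(\CG)$, which by the Frobenius structure and the projective–injectivity of the objects $I(\Sigma^{-1}\Y)$ appearing in the proof of Lemma~\ref{lemma02} can be detected on the homotopy category. I would present the first, cleaner argument and relegate the sign-checking to a remark.
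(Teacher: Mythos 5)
Your argument is correct, but it is genuinely different from the one in the paper. The paper's proof is a hands-on computation: starting from a retraction $r$ with $ru\sim 1_{\Y}$ and the homotopy $t$ witnessing this, it writes down an explicit degreewise formula for a new map $a:C(f)\rt \Y$, checks $au=1_{\Y}$, and then verifies directly that $a$ commutes with the differentials, so that $a$ is an honest retraction in $\C_N(\CG)$. You instead argue formally: the sequence $0\rt\Y\xrightarrow{u}C(f)\rt\Sigma\X\rt 0$ is degreewise split and is precisely $\psi(f)$ in the notation of Lemma~\ref{lemma02} (with $\Sigma\X$ playing the role of the quotient, via $\Sigma^{-1}\Sigma\X\cong\X$); since $\K_N(\CG)$ is triangulated with standard triangles $\X\xrightarrow{f}\Y\xrightarrow{u}C(f)\rt\Sigma\X$ (by \cite[Theorem 2.6]{IKM}), rotation shows that $u$ is split mono in $\K_N(\CG)$ iff the connecting map $-\Sigma f$ vanishes there, i.e.\ iff $f$ is null-homotopic, and then the kernel computation in Lemma~\ref{lemma02} (equivalently \cite[Proposition 2.14]{YD}, which the paper itself invokes in the proof of Theorem~\ref{theorem 42}) gives that $\psi(f)=0$, so the sequence splits in $\C_N(\CG)$. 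Your route is shorter and conceptually cleaner, and it makes transparent that the only content is ``$u$ split mono in $\K_N$ iff $f\sim 0$ iff the extension class vanishes''; its cost is that it leans on the triangulated structure and on matching the sign/suspension conventions between the cone triangle and the isomorphism of Lemma~\ref{lemma02}, which you rightly flag and which must be pinned down in a final write-up. The paper's computation, by contrast, is self-contained (independent of the triangulation and of Lemma~\ref{lemma02}'s surjectivity/kernel identification) and exhibits a concrete retraction built from $r$, $t$ and $f$, at the price of a lengthy verification. Either proof is acceptable; if you present yours, state explicitly that the rotated triangle $\Y\rt C(f)\rt\Sigma\X\xrightarrow{-\Sigma f}\Sigma\Y$ is distinguished and record the standard fact that in a distinguished triangle the first map is a split monomorphism iff the connecting morphism is zero.
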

\begin{proof}
``$\Rightarrow$'' is clear. Conversely suppose that $\Y\xrightarrow{u}C(f)$ is split monomorphism in $\K_N(\CG)$. So there is a morphism $r:C(f)\rt \Y$ such that $ru\sim 1_{\Y}$. Let $t$ be the corresponding homotopy as in the definition \ref{def 11}. Define $a:C(f)\rt \Y$ by 
$$
(y,x_1,x_2,...x_{N-1})\longmapsto y+\sum_{r=1}^{N-1}\sum_{i=1}^{N-r}d_{\Y,\lbrace N-i-r\rbrace}^{n-(N-i-r)} t^{n+i+r-1} d_{\Y,\lbrace i-1\rbrace}^{n+r}f^{n+r}(x_r)+r^n(0,x_1,x_2,...x_{N-1})
$$
Clearly $au=1_{\Y}$. So it is enough to show that $a=(a^n)_{n\in \Z}$ is a morphism in $\C_N(\CG)$, i.e $d^n_\Y a^n=a^{n+1}d^n_{C(f)}$ for all $n\in \Z$. Given $(y, x_1,...,x_{N-1})\in Y^n \oplus \coprod_{i=n+1}^{n+N-1} X^i$, so we need to show that 
\begin{align*}
d^n_{\Y}(y)&+\sum_{r=1}^{N-1}\sum_{i=1}^{N-r}d_{\Y,\lbrace N-i-r+1\rbrace}^{n-(N-i-r)} t^{n+i+r-1} d_{\Y,\lbrace i-1\rbrace}^{n+r}f^{n+r}(x_r)+d^n_\Y r^n(0,x_1,x_2,...x_{N-1})\\
&= d^n_\Y(y) + f^{n+1}(x_1)+ \sum_{r=1}^{N-2}\sum_{i=1}^{N-r}d_{\Y,\lbrace N-i-r\rbrace}^{n+1-(N-i-r)} t^{n+i+r} d_{\Y,\lbrace i-1\rbrace}^{n+r+1}f^{n+r+1}(x_{r+1})\\
&\qquad\qquad - \sum_{i=1}^{N-r}t^{n+N}f^{n+N}d^{n+i}_{\X,\lbrace N-i\rbrace}(x_i)+r^{n+1}(0,x_2,x_3,...x_{N-1},-\sum_{i=1}^{N-r}d^{n+i}_{\X,\lbrace N-i\rbrace}(x_i))
\end{align*}
Canceling the same terms from both side and using the fact that $f$ is a morphism in $\C_N(\CG)$ and $ru\sim 1_\Y$, we are reduced to show that
\begin{align*}
(1-r^{n+1}u^{n+1})(f^{n+1}(x_1))+d^n_\Y r^n(0,x_1,x_2,...,x_{N-1})&=f^{n+1}(x_1) \\
&+r^{n+1}(0,x_2,...,x_{N-1},-\sum_{i=1}^{N-r}d^{n+i}_{\X,\lbrace N-i\rbrace}(x_i))
\end{align*}
Or equivalently,
$$
r^{n+1}(f^{n+1}(x_1),0,...,0)=d^n_\Y r^{n}(0,x_1,...,x_{N-1})+r^{n+1}(0,-x_2,...,-x_{N-1},\sum_{i=1}^{N-r}d^{n+i}_{\X,\lbrace N-i\rbrace}(x_i))
$$
and this equation satisfies, Since 
$$d^n_\Y r^{n}(0,x_1,...,x_{N-1})= r^{n+1}(f^{n+1}(x_1),x_2,...,x_{N-1},-\sum_{i=1}^{N-r}d^{n+i}_{\X,\lbrace N-i\rbrace}(x_i))$$
\end{proof}

The idea of the proof of the following Theorem is taken from \cite[Theorem 3.5]{EBIJR}. We provide here the argument for the reader's convenience.

\begin{theorem}
\label{theorem 42}
Let $(\CF,\CC)$ be a cotorsion pair in $\C(\CG)$ such that $\CF$ is closed under taking suspensions. Then the embedding $\K_N(\CF)\rightarrow \K_N(\CG)$ has a right adjoint.
\end{theorem}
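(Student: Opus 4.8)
The plan is to produce, for each $\Y\in\K_N(\CG)$, an ``$\CF$-colocalization triangle'' $\F_\Y\xrightarrow{\varepsilon_\Y}\Y\to\mathbf{Z}_\Y\to\Sigma\F_\Y$ in $\K_N(\CG)$ with $\F_\Y\in\CF$ and $\Hom_{\K_N(\CG)}(\A,\mathbf{Z}_\Y)=0$ for every $\A\in\CF$. First I would record that $\K_N(\CF)$ is a full triangulated subcategory of $\K_N(\CG)$: it is closed under $\Sigma$ and $\Sigma^{-1}$ by hypothesis, and closed under mapping cones because for a morphism $f\colon\X\to\Y$ with $\X,\Y\in\CF$ the canonical sequence $0\to\Y\to C(f)\to\Sigma\X\to0$ is degreewise split, hence an extension in $\C_N(\CG)$ with both ends in $\CF$, and $\CF={}^\perp\CC$ is extension-closed. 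Granting the triangles above, the assignment $\Y\mapsto\F_\Y$ is automatically a functor right adjoint to the inclusion with counit $\varepsilon_\Y$ (pointwise colocalizations onto a full subcategory assemble into an adjoint), so the whole problem is the pointwise construction.

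Next, fix $\Y\in\C_N(\CG)$. Using that $(\CF,\CC)$ is complete --- which is the case for all the cotorsion pairs of Section~\ref{section 3} to which we apply this theorem --- I would pick a special $\CF$-precover, that is, a short exact sequence $0\to\CC_\Y\xrightarrow{\lambda}\F_\Y\xrightarrow{\varepsilon}\Y\to0$ in $\C_N(\CG)$ with $\F_\Y\in\CF$ and $\CC_\Y\in\CC$, and take the associated mapping-cone triangle $\F_\Y\xrightarrow{\varepsilon}\Y\xrightarrow{u}C(\varepsilon)\to\Sigma\F_\Y$; the claim is that $\mathbf{Z}_\Y:=C(\varepsilon)$ does the job. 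Applying $\Hom_{\K_N(\CG)}(\A,-)$ to this triangle and using that $\CF$ is closed under $\Sigma^{-1}$, one checks that $\Hom_{\K_N(\CG)}(\A,C(\varepsilon))=0$ for all $\A\in\CF$ is equivalent to the bijectivity, for every $\A\in\CF$, of $\varepsilon_*\colon\Hom_{\K_N(\CG)}(\A,\F_\Y)\to\Hom_{\K_N(\CG)}(\A,\Y)$. Surjectivity is the easy half: given a chain map $g\colon\A\to\Y$, pull the precover sequence back along $g$; the resulting sequence $0\to\CC_\Y\to\F_\Y\times_\Y\A\to\A\to0$ splits because $\Ext^1_{\C_N(\CG)}(\A,\CC_\Y)=0$, and a splitting followed by the projection onto $\F_\Y$ is a chain-level lift of $g$ through $\varepsilon$.

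The injective half is where the work is, and where Lemma~\ref{lemma 41} is used: the precover property is a statement about $\C_N(\CG)$ only, whereas a naive argument inside $\K_N(\CG)$ would be circular (it would presuppose the orthogonality we are proving). So I would argue: let $h\colon\A\to\F_\Y$ with $\varepsilon h$ null-homotopic. Then $\varepsilon h=0$ in $\K_N(\CG)$, hence the canonical inclusion $\iota\colon\Y\to C(\varepsilon h)$ is a split monomorphism in $\K_N(\CG)$, and therefore, by Lemma~\ref{lemma 41}, already a split monomorphism in $\C_N(\CG)$; fix a chain retraction $\rho\colon C(\varepsilon h)\to\Y$ with $\rho\iota=1_\Y$. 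The cone formulae of Section~\ref{section 2} provide an obvious chain map $\bar\varepsilon\colon C(h)\to C(\varepsilon h)$, equal to $\varepsilon$ on the $\F_\Y$-summand and to the identity on the $\A$-summands, with $\bar\varepsilon\circ v=\iota\circ\varepsilon$ for the cone inclusion $v\colon\F_\Y\to C(h)$. Since $C(h)\in\CF$ and $\varepsilon$ is an $\CF$-precover, the chain map $\rho\bar\varepsilon\colon C(h)\to\Y$ lifts along $\varepsilon$ to a chain map $r\colon C(h)\to\F_\Y$; then $\varepsilon(rv-1_{\F_\Y})=0$, so $rv-1_{\F_\Y}=\lambda\eta$ for some chain map $\eta\colon\F_\Y\to\CC_\Y$. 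Finally $\eta$ is null-homotopic, since by Lemma~\ref{lemma02} the group $\Hom_{\K_N(\CG)}(\F_\Y,\CC_\Y)\cong\Ext^1_{dw}(\Sigma^{-1}\F_\Y,\CC_\Y)$ is a subgroup of $\Ext^1_{\C_N(\CG)}(\Sigma^{-1}\F_\Y,\CC_\Y)=0$ (here $\Sigma^{-1}\F_\Y\in\CF$ and $\CC_\Y\in\CC$). Hence $rv\sim1_{\F_\Y}$, so $v$ is a split monomorphism in $\K_N(\CG)$, and since $vh=0$ in $\K_N(\CG)$ we get $h=(rv)h=r(vh)=0$ there, as required.

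I expect the passage from $\K_N(\CG)$ back to $\C_N(\CG)$ via Lemma~\ref{lemma 41} to be the one genuinely delicate point; everything else --- the reduction in the first paragraph, the surjectivity argument, the fact that $\bar\varepsilon$ is a chain map with the stated compatibility, and the $\Ext_{dw}$-computation --- is a routine unwinding of the material in Section~\ref{section 2}, following \cite[Theorem 3.5]{EBIJR}.
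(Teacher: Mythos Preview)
Your proof is correct and follows essentially the same strategy as the paper --- both use the special $\CF$-precover $0\to\CC_\Y\to\F_\Y\xrightarrow{\varepsilon}\Y\to 0$, Lemma~\ref{lemma 41}, and the vanishing $\Hom_{\K_N}(\CF,\CC)=0$ --- the only differences being cosmetic: you frame it via colocalization triangles and prove $\varepsilon_*$ bijective, while the paper defines the functor directly and checks well-definedness on morphisms, and you invoke Lemma~\ref{lemma 41} on the cone of $\varepsilon h$ whereas the paper invokes it on the cone of $g$. One minor slip: the identification from Lemma~\ref{lemma02} should read $\Hom_{\K_N(\CG)}(\F_\Y,\CC_\Y)\cong\Ext^1_{dw}(\Sigma\F_\Y,\CC_\Y)$ rather than $\Ext^1_{dw}(\Sigma^{-1}\F_\Y,\CC_\Y)$, but since $\CF$ is closed under both $\Sigma$ and $\Sigma^{-1}$ this does not affect the conclusion.
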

\begin{proof}
We define right adjoint $T:\K_N(\CG)\rightarrow \K_N(\CF)$ as follows

\textbf{On object:} Let $\X\in\C_N(\CG)$. Consider an exact sequence $0\rt \mathbf{C} \rt \F\rt \X\rt 0$ with $\F\in \CF$ and $\mathbf{C}\in \CC$. Then define $T(\X):=\F$.

\textbf{On Morphism:} Let $f:\X\rt \X'$ be a morphism in $\C_N(\CG)$ and consider the following diagram:
$$\xymatrix{0 \ar[r] & \mathbf{C} \ar[r]  & \F \ar[r]^{p}  & \X \ar[d]^{f}\ar[r] & 0 \\ 0 \ar[r] & \mathbf{C}' \ar[r] & \F' \ar[r]^{q} & \X' \ar[r]  & 0 }$$
But we have the exact sequence $$\Hom_{\C_N(\CG)}(\F,\F')\rt \Hom_{\C_N(\CG)}(\F,\X')\rt \Ext^1_{\C_N(\CG)}(\F,\mathbf{C}')=0$$
Hence there exists $g\in \Hom_{\C_N(\CG)}(\F,\F')$ such that $fp=qg$. So define $T(f):=g$. This definition is well defined up to homotopy. Indeed, if $f_1, f_2 :\X\rt \X'$ are two morphisms such that $f_1\sim f_2$ and suppose that $T(f_1)=g_1$ and $T(f_2)=g_2$, then we claim $g_1\sim g_2$. Since $f_1\sim f_2$ we can say that $f_1 p\sim f_2 p$ and therefore $f=(f_1 p-f_2 p) \sim 0$. We show that $g=(g_1-g_2)\sim 0$. To this point consider the following diagram:
$$\xymatrix{0 \ar[r] & \F' \ar[r]^{i}\ar[d]^{q}  & C(g)\ar[r] \ar@{-->}[d]^t  & \Sigma \F \ar@{=}[d]\ar[r] & 0 \\ 0 \ar[r] & \X' \ar[r]^{j} & C(f) \ar[r] & \Sigma \F \ar[r]  & 0 }$$
Since $f\sim 0$, by \cite[proposition 2.14]{YD} we get that the lower short exact sequence splits. Consider $r:C(f)\rt \X'$. Since $\F'\rt \X'$ is an $\CF$-precover, then there exists $\ell:C(g)\rt \F'$ such that $rt=q\ell$. We claim that $\ell$ provides a retraction of $i:\F'\rt C(g)$ in $\K_N(\CG)$. For this, it is easy to check that $q(1_{\F'}-\ell i)=0$, So we can say that  $1_{\F'}-\ell i$ maps $\F'$ into the kernel of $q$, that is, into $\mathbf{C}'$. Again by \cite[proposition 2.14]{YD} and using this fact $\Ext^1_{\C_N(\CG)}(\Sigma \F',\mathbf{C}')=0$ we can say that $1_{\F'}-\ell i$ is homotopic to $0$. So $\ell i\sim 1_{\F'}$, i.e. $\ell$ provides a retraction of $i:\F'\rt C(g)$ in $\K_N(\CG)$. By Lemma \ref{lemma 41} $\F'\rt C(g)$ is split monomorphism in $\C_N(\CG)$, hence $0\rt \F' \rt C(g) \rt \Sigma \F\rt 0$ is split exact. Therefore by \cite[proposition 2.14]{YD}, we get that $g\sim 0$.

Clearly we see that if $g_1\sim g_2$ then $f_1\sim f_2$. Hence  $$\psi:\Hom_{\K_N(\CF)}(\F'',T(\X))\rightarrow \Hom_{\K_N(\CG)}(\F'',\X)$$ is injective. Clearly $\psi$ is surjective and so it is bijective.
\end{proof}

\begin{lemma}
\label{lemma 43}($N$-complex version of Neeman's result \cite[Theorem 3.2]{Nee10})
Let $R$ be a ring. The inclusion $i:\K_N(\Flat R)\rt K_N(\Mod R)$ has a right adjoint functor.
\end{lemma}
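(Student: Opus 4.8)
The plan is to obtain this lemma as an instance of Theorem~\ref{theorem 42}. I would apply that theorem to the Grothendieck category $\CG=\Mod R$ and to the pair $(\CF,\CC)=(\C_N(\Flat R),\C_N(\Flat R)^{\perp})$ in $\C_N(\Mod R)$, where $\C_N(\Flat R)$ denotes the class of all $N$-complexes whose components are flat $R$-modules. The work then reduces to two points: (i) $(\C_N(\Flat R),\C_N(\Flat R)^{\perp})$ is a \emph{complete} cotorsion pair in $\C_N(\Mod R)$; and (ii) $\C_N(\Flat R)$ is closed under $\Sigma$ and $\Sigma^{-1}$. Granting these, $\CF=\C_N(\Flat R)$ meets the hypotheses of Theorem~\ref{theorem 42}, which supplies a right adjoint to the embedding $\K_N(\Flat R)=\K_N(\C_N(\Flat R))\hookrightarrow\K_N(\Mod R)$, which is exactly the assertion.

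Point (ii) is immediate from the explicit formulas for $\Sigma$ and $\Sigma^{-1}$: for $\X\in\C_N(\Flat R)$ every component $(\Sigma\X)^{m}$ and $(\Sigma^{-1}\X)^{m}$ is a finite coproduct of components of $\X$, hence flat. For point (i) the starting input is that the flat cotorsion pair $(\Flat R,(\Flat R)^{\perp})$ in $\Mod R$ is cogenerated by a set --- this is part of the machinery behind the flat cover conjecture \cite{BBE}, and amounts to saying that $\Flat R$ is a deconstructible, indeed a Kaplansky, class --- together with the fact that the generator $R$ of $\Mod R$ lies in $\Flat R$.

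From this I would show, just as in the proof of Proposition~\ref{Prop 0001} but in the easier situation where the $N$-exactness and ``cycles in $\CX$'' conditions are absent, that every $\X\in\C_N(\Flat R)$ admits a $\C_N(\Flat R)^{\kappa}$-filtration for a suitable regular uncountable $\kappa\geq\lambda$: one writes $\X$ as a continuous union of small subcomplexes with flat components --- each built by the recipe $P^{i}=\sum_{k=0}^{N-1} d^{i-k}_{\{k\}}(S^{i-k})$ from small submodules $S^{i}\subseteq X^{i}$, enlarged by a zig-zag argument of the kind used in Lemma~\ref{lemma2.14} so that every quotient $X^{i}/P^{i}$ stays flat (using the Kaplansky property of $\Flat R$: a small subobject of a flat module lies in a small pure, hence flat, subobject). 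Then Theorem~\ref{theorem 2.12}, applied to the concrete Grothendieck category $\C_N(\Mod R)$, in which $\C_N(\Flat R)$ contains the $\lambda$-presentable generators $D^{i}_{N}(R)$, shows that $(\C_N(\Flat R),\C_N(\Flat R)^{\perp})$ is cogenerated by a set, hence complete by \cite[Corollary~6.6]{Hov02}. (Alternatively one may simply invoke the corresponding well-known statement for ordinary complexes and transcribe it to $N$-complexes.)

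The step I expect to cause the most trouble is exactly this completeness claim, and more precisely the bookkeeping needed to produce the $\C_N(\Flat R)^{\kappa}$-filtration of an \emph{unbounded} $N$-complex; but once the technique of Proposition~\ref{Prop 0001} is in place, dropping the exactness and cycle constraints only simplifies the construction, so the remaining argument is routine verification. Everything after (i) and (ii) is purely formal, handled by Theorem~\ref{theorem 42}.
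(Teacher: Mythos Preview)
Your proposal is correct and follows essentially the same route as the paper: both reduce the statement to Theorem~\ref{theorem 42} applied with $\CF=\C_N(\Flat R)$, and both note that this class is closed under suspensions. The only difference is in how point~(i) is handled: the paper does not redo the filtration argument but simply invokes \cite[Proposition~4.4]{YC}, which already states that $(\C_N(\Flat R),\C_N(\Flat R)^{\perp})$ is a complete cotorsion pair in $\C_N(\Mod R)$ whenever $(\Flat R,(\Flat R)^{\perp})$ is complete in $\Mod R$. Your sketch via Kaplansky classes and Theorem~\ref{theorem 2.12} is a valid way to recover that result, but it is more work than the paper needs here.
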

 \begin{proof}
 Consider the complete cotorsion pair $(\Flat R, {\Flat R}^\perp)$. By  \cite[Proposition 4.4]{YC} we have a complete cotorsion pair $(\C_N(\Flat R), \C_N(\Flat R)^\perp)$ in $\C_N(\Mod R)$. Since $\C_N(\Flat R)$ is closed under taking suspensions then by Theorem \ref{theorem 42} $\K_N(\Flat R)\rightarrow \K_N(\Mod R)$ has right adjoint functor $i^\ast:\K_N(\Mod R)\rightarrow \K_N(\Prj R)$.
 \end{proof}

\begin{lemma}
\label{lemma 43}($N$-complex version of Neeman's result \cite[Proposition 8.1]{Nee08})
The natural inclusion $j_{!}:\K_N(\Prj R)\rt K_N(\Flat R)$ has a right adjoint functor.
\end{lemma}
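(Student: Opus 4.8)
The plan is to factor the embedding $j_!$ as the bottom of the chain of full embeddings $\K_N(\Prj R)\hookrightarrow\K_N(\Flat R)\hookrightarrow\K_N(\Mod R)$, build a right adjoint of the composite by applying Theorem~\ref{theorem 42}, and then push it down one step. The point of passing to $\K_N(\Mod R)$ is that $\C_N(\Mod R)$ is a genuine abelian (in fact Grothendieck) category, so Theorem~\ref{theorem 42} is available there, whereas $\C_N(\Flat R)$ is only exact.

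First I would record the cotorsion pair to feed into Theorem~\ref{theorem 42}. The pair $(\Prj R,\Mod R)$ is a complete hereditary cotorsion pair in $\Mod R$ whose left class contains the projective generator $R$; so, exactly as in the proof of the previous lemma, but with $(\Flat R,(\Flat R)^\perp)$ replaced throughout by $(\Prj R,\Mod R)$, \cite[Proposition 4.4]{YC} produces a complete cotorsion pair $(\C_N(\Prj R),\C_N(\Prj R)^\perp)$ in $\C_N(\Mod R)$, where $\C_N(\Prj R)$ denotes the class of \emph{all} $N$-complexes with projective components. Reading off the matrices defining $\Sigma$ and $\Sigma^{-1}$, one sees at once that $\C_N(\Prj R)$ is closed under suspensions: each component of $\Sigma\mathbf{X}$ and of $\Sigma^{-1}\mathbf{X}$ is a finite coproduct of components of $\mathbf{X}$, hence projective, and the new differentials are assembled from identities and the differentials of $\mathbf{X}$. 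Consequently Theorem~\ref{theorem 42} applies and yields a right adjoint $j^\ast\colon\K_N(\Mod R)\longrightarrow\K_N(\Prj R)$ of the embedding $\K_N(\Prj R)\hookrightarrow\K_N(\Mod R)$.

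It remains to descend to $\K_N(\Flat R)$. Let $i\colon\K_N(\Flat R)\hookrightarrow\K_N(\Mod R)$ be the inclusion; it is fully faithful, since a null-homotopy of a morphism between $N$-complexes of flat modules is literally the same datum in $\K_N(\Flat R)$ and in $\K_N(\Mod R)$. As every projective module is flat, the composite $i\circ j_!$ is precisely the embedding $\K_N(\Prj R)\hookrightarrow\K_N(\Mod R)$ treated above. Hence for $\mathbf{P}\in\K_N(\Prj R)$ and $\mathbf{F}\in\K_N(\Flat R)$ there are natural isomorphisms
\[
\Hom_{\K_N(\Flat R)}(j_!\mathbf{P},\mathbf{F})\;\cong\;\Hom_{\K_N(\Mod R)}(i j_!\mathbf{P},\,i\mathbf{F})\;\cong\;\Hom_{\K_N(\Prj R)}(\mathbf{P},\,j^\ast i\mathbf{F}),
\]
the first being full faithfulness of $i$ and the second the adjunction of Theorem~\ref{theorem 42}. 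Therefore $j^\ast\circ i\colon\K_N(\Flat R)\longrightarrow\K_N(\Prj R)$ is a right adjoint of $j_!$, and being the right adjoint of an exact functor it is automatically exact.

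Given Theorem~\ref{theorem 42} and \cite[Proposition 4.4]{YC}, the argument is essentially formal, and the one point that deserves care — the point I would call the main obstacle — is making sure that the left-hand cotorsion class in play is the full class $\C_N(\Prj R)$ of all complexes of projectives, so that its homotopy category really is $\K_N(\Prj R)$, rather than the $N$-exact subclass delivered by Corollary~\ref{corollary 3.13}; and, correspondingly, that this class is the left half of a cotorsion pair cogenerated by a set, so that \cite[Proposition 4.4]{YC} genuinely applies with the projective cotorsion pair $(\Prj R,\Mod R)$ as input. Once that is granted, the suspension-closedness verification and the adjunction bookkeeping are routine.
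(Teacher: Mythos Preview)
Your proof is correct and follows essentially the same route as the paper: start from the projective cotorsion pair $(\Prj R,\Mod R)$, invoke \cite[Proposition 4.4]{YC} to obtain the complete cotorsion pair $(\C_N(\Prj R),\C_N(\Prj R)^\perp)$ in $\C_N(\Mod R)$, apply Theorem~\ref{theorem 42} using suspension-closedness of $\C_N(\Prj R)$ to get a right adjoint of $\K_N(\Prj R)\hookrightarrow\K_N(\Mod R)$, and then restrict along the fully faithful inclusion $\K_N(\Flat R)\hookrightarrow\K_N(\Mod R)$. The paper compresses the last step into one line, while you spell out the adjunction bookkeeping and the suspension-closedness verification explicitly; your added caution about distinguishing $\C_N(\Prj R)$ from its $N$-exact subclass is well placed but not something the paper dwells on.
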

 \begin{proof}
 Consider the complete cotorsion pair $(\Prj R, \Mod R)$. By  \cite[Proposition 4.4]{YC} we have a complete cotorsion pair $(\C_N(\Prj R), \C_N(\Prj R)^\perp)$ in $\C_N(\Mod R)$. Since $\C_N(\Prj R)$ is closed under taking suspensions then by Theorem \ref{theorem 42} $\K_N(\Prj R)\rightarrow \K_N(\Mod R)$ has right adjoint functor $j:\K_N(\Mod R)\rightarrow \K_N(\Prj R)$. Then the natural inclusion $j_{!}:\K_N(\Prj R)\rightarrow \K_N(\Flat R)$ has a right adjoint $j^\ast={j_{!}}|_{\K_N(\Flat R)}$. 
 \end{proof}
 
\begin{lemma}\label{lemma 44}($N$-complex version of Neeman's result \cite[Theorem 0.1]{Nee10}) The functor $j^\ast:\K(\Flat R)\rt \K_N(\Prj R)$ has a right adjoint functor. 
\end{lemma}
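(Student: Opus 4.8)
The plan is to realise the sought right adjoint as the inclusion of a coreflective subcategory of $\K_N(\Flat R)$, following Neeman's strategy in the case of ordinary complexes \cite{Nee10}. By the previous lemma, $j^\ast$ is a right adjoint of the fully faithful inclusion $j_!\colon\K_N(\Prj R)\hookrightarrow\K_N(\Flat R)$; so for each $\Y\in\K_N(\Flat R)$ the counit $j_!j^\ast\Y\to\Y$ fits in a triangle $j_!j^\ast\Y\to\Y\to\W_\Y\to\Sigma j_!j^\ast\Y$, and applying $j^\ast$ (which is the identity on $\im j_!$) yields $j^\ast\W_\Y=0$, i.e.\ $\W_\Y\in\CZ:=\ker j^\ast$; moreover $\Hom_{\K_N(\Flat R)}(j_!\mathbf{P},\mathbf{Z})\cong\Hom_{\K_N(\Prj R)}(\mathbf{P},j^\ast\mathbf{Z})=0$ for all $\mathbf{Z}\in\CZ$. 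From these triangles, together with the fact that the left adjoint $j_!$ preserves coproducts, one checks that $\CZ$ is closed under coproducts in $\K_N(\Flat R)$ precisely when $j^\ast$ preserves coproducts. Also, by the standard theory of recollements, if the inclusion $\CZ\hookrightarrow\K_N(\Flat R)$ admits a right adjoint then so does $j^\ast$ (its right adjoint being the inclusion $\CZ^{\perp}\hookrightarrow\K_N(\Flat R)$, with $\CZ^{\perp}$ the right orthogonal taken in $\K_N(\Flat R)$, composed with a quasi-inverse of the equivalence $j^\ast\colon\CZ^{\perp}\to\K_N(\Prj R)$). Hence it suffices to produce a right adjoint of $\CZ\hookrightarrow\K_N(\Flat R)$, and for this, by Neeman's Brown representability theorem for well generated triangulated categories, it is enough to prove that $\CZ$ is well generated and closed under coproducts.

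\textbf{Step 2 ($j^\ast$ preserves coproducts).} From the proof of the previous lemma, $j^\ast$ is the functor furnished by Theorem \ref{theorem 42} for the complete cotorsion pair $(\C_N(\Prj R),\C_N(\Prj R)^{\perp})$: for $\Y\in\C_N(\Flat R)$ one has $j^\ast\Y\simeq\mathbf{P}$, where $0\to\mathbf{C}\to\mathbf{P}\to\Y\to 0$ is a special $\C_N(\Prj R)$-precover. Taking coproducts over a family $(\Y_i)$, the sequence $0\to\coprod\mathbf{C}_i\to\coprod\mathbf{P}_i\to\coprod\Y_i\to 0$ is again such a precover as soon as $\coprod\mathbf{C}_i\in\C_N(\Prj R)^{\perp}$, and then $j^\ast(\coprod\Y_i)\simeq\coprod\mathbf{P}_i=\coprod j^\ast\Y_i$. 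So the point is to show that $\C_N(\Prj R)^{\perp}$ is closed under the relevant coproducts; here one uses that, $\Y_i$ being a complex of flats, each $0\to C_i^n\to P_i^n\to Y_i^n\to 0$ is pure (flat cokernel), so the $\mathbf{C}_i$ are again complexes of flat modules, and one combines the deconstructibility of $\C_N(\Prj R)$ (Theorem \ref{theorem 2.12}, giving $\C_N(\Prj R)^{\perp}=\CS^{\perp}$ for a set $\CS$ of $\kappa$-presentable $N$-complexes) with Lazard's theorem that the flat modules $Y_i^n$ are direct limits of finitely generated projectives, reducing to a cogenerating set whose $\Ext^1$ commutes with coproducts.

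\textbf{Step 3 ($\CZ$ is well generated; the main obstacle).} Using Lemma \ref{lemma02} one describes $\CZ=(\im j_!)^{\perp}$ as the class of $N$-complexes $\mathbf{C}$ of flat modules for which every chain map from an $N$-complex of projectives is null-homotopic. Testing against the disk complexes $D^i_r(R)\in\C_N(\Prj R)$ and using Lemma \ref{lemma01} shows such $\mathbf{C}$ are $N$-exact, while Lemma \ref{lemma2.14} and the Hill Lemma (Theorem \ref{theorem Hill}) give enough control over the $\kappa$-presentable subobjects of the cycles to run a filtration argument. The key assertion — which I expect to be where the real work lies — is that $\CZ$ is $\aleph_1$-compactly generated, with $\aleph_1$-compact generators the objects of $\CZ$ all of whose components are $\aleph_1$-presentable; this is the $N$-complex counterpart of the main technical result of \cite{Nee10}, and its proof should transport Neeman's argument using the generalized Hill lemma of Section \ref{section 2} in place of the ad hoc set-theoretic estimates there. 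Granting Steps 2 and 3, Brown representability produces a right adjoint of $\CZ\hookrightarrow\K_N(\Flat R)$, and by Step 1 this gives the desired right adjoint of $j^\ast$.
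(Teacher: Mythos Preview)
Your Step 1 reduction is exactly the paper's framework: one has the sequence $\CZ \xrightarrow{\imath} \K_N(\Flat R) \xrightarrow{j^\ast} \K_N(\Prj R)$ with $\CZ = \K_N(\Prj R)^\perp = \ker j^\ast$, and a right adjoint to $j^\ast$ follows (via \cite[Lemma 2.3]{Mur}) once the inclusion $\CZ \hookrightarrow \K_N(\Flat R)$ admits a right adjoint. From here, however, the paper takes a much shorter route that bypasses your Steps 2 and 3 entirely.

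The paper first identifies $\CZ$ concretely as $\K_N(\widetilde{\Flat R}_N)$, the homotopy category of $N$-exact complexes of flat modules with all cycles flat (the $N$-analogue of \cite[Fact 2.14]{Nee08}; you get halfway there in Step 3 by testing against disks, but you do not isolate the flatness-of-cycles condition). It then observes that $(\widetilde{\Flat R}_N, (\widetilde{\Flat R}_N)^\perp)$ is a \emph{complete} cotorsion pair in $\C_N(\Mod R)$ --- this is precisely Corollary \ref{corollary 3.13}(1) for the flat cotorsion pair, and is the payoff of all of Section \ref{section 3}. Since $\widetilde{\Flat R}_N$ is closed under suspensions, Theorem \ref{theorem 42} immediately supplies a right adjoint to $\K_N(\widetilde{\Flat R}_N) \hookrightarrow \K_N(\Flat R)$. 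No Brown representability, no well-generation, and no coproduct-preservation argument is needed.

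By contrast, your Steps 2 and 3 contain real gaps. In Step 2 the reduction ``to a cogenerating set whose $\Ext^1$ commutes with coproducts'' is not justified: deconstructibility yields only a set of $\kappa$-presentable test objects, and $\Ext^1(S,-)$ commuting with coproducts requires something like $FP_2$, which neither $\kappa$-presentability nor Lazard's theorem provides. (Once $\CZ = \K_N(\widetilde{\Flat R}_N)$ is established, closure under coproducts is trivial anyway, since flatness and $N$-exactness are preserved by coproducts --- so the detour through precovers is unnecessary.) Step 3 is, by your own admission, only a programme: transporting Neeman's $\aleph_1$-generation argument to $N$-complexes is substantial and you carry none of it out. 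The point of the paper's proof is exactly that the cotorsion-pair technology of Section \ref{section 3} makes this heavy lifting unnecessary.
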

\begin{proof}
The functor $j_{!}:\K_N(\Prj R)\rt K_N(\Flat R)$ is fully faithful and by Lemma \ref{lemma 43} has a right adjoint $j^\ast$. Formal nonsense tell us that the right adjoint functor $j^\ast:\K_N(\Flat R)\rightarrow \K_N(\Prj R)$ is a Verdier quotient. The same formal nonsense also tell us that the right adjoint of Verdier quotient is fully faithful. By \cite[Remark 2.12]{Nee08} this adjoint functor identifies $\K_N(\Prj R)$ with the Verdier quotient map
$$\K_N(\Prj R)\rightarrow \K_N(\Flat R)/\K_N(\Prj R)^\perp$$
where 
$$\K_N(\Prj R)^\perp=\lbrace Y\in \K_N(\Flat R)\, | \, \Hom(j_{!}X,Y)=0\,:\, \forall X\in \K_N(\Prj R)\rbrace$$
But we can say thet 
\begin{equation}
\K_N(\Prj R)^\perp\xrightarrow{\imath} \K_N(\Flat R)\xrightarrow{j^\ast}\K_N(\Prj R) \label{eq-00}
\end{equation}
is an quotient sequence of triangulated functor (see,the definitions in \cite[chapter 2, pg.15]{Mur}). 
But clearly, $\K_N(\Prj R)^\perp$ concides with $\K_N(\widetilde{\Flat R})$ (see the Definition \ref{def 35} and \cite[Fact 2.14]{Nee08}). Now, by Corollary \ref{corollary 3.13} $(\widetilde{\Flat R}_N,\widetilde{\Flat R}^\perp_N)$ is a complete cotorsion pair. So by Theorem \ref{theorem 42} $\K_N(\Prj R)^\perp=\K_N(\widetilde{\Flat R})\rightarrow \K_N(\Flat R)$ admits a right adjoint functor. So we can say that the sequence \ref{eq-00} is a localization sequence. Hence by \cite[Lemma 2.3]{Mur} $j^\ast$ has a right adjoint.

\end{proof}

\section*{Acknowledgments}

I would like to thank Jan \v{S}aroch for his interest and for his pivotal role in proving the crucial Proposition \ref{Prop 0001}

\end{document}